\newtheorem{theorem}{Theorem}[section]
\newtheorem{proposition}[theorem]{Proposition}
\newtheorem{lemma}[theorem]{Lemma}
\theoremstyle{definition}
\newtheorem{remark}[theorem]{Remark}
\newtheorem{example}{Example}[section]
\newtheorem{assumption}{Assumption}[section]
\numberwithin{equation}{section}
\def\div{\mathop{\mathrm{div}}\nolimits}
\def\!{\mathop{\mathrm{!}}}
\newcommand{\Keywords}[1]{\par\indent
{\small{\textbf{Key words and phrases.} \/} #1}}
\newcommand{\subjclass}[1]{\par\indent{ \textbf{AMS subject classification.} #1}}
\def\R{\mathbb{ R}}
\def\E{\mathbb{ E}}
\def\Q{\mathcal{Q}}
\def\L{\mathcal{L}}
\def\P{\mathcal{P}}
\def\bRb{\mathbb{R}}
\def\bNb{\mathbb{N}}
\newcommand{\gaga}{\left|\left|}
\newcommand{\drdr}{\right|\right|}
\newcommand{\eee}{{\rm e}}
\newcommand{\dd}{{\rm d}}
\newcommand{\ddx}{{\rm d}x}
\newcommand{\ddy}{{\rm d}y}
\newcommand{\ddt}{{\rm d}t}
\newcommand{\dds}{{\rm d}s}
\newtheorem{thm}{Theorem}[section]
\newtheorem{prop}[theorem]{Proposition}
\newtheorem{cor}[theorem]{Corollary}
\newtheorem{lem}[theorem]{Lemma}
\newtheorem{defn}[theorem]{Definition}
\def\F{\mathcal{F}}
\newlength{\boxwidth}
\newcommand{\RV}{\textcolor{black}}
\title{Multi-species McKean-Vlasov dynamics in non-convex landscapes}
\author{Manh Hong Duong\thanks{School of Mathematics, University of Birmingham, Birmingham B15 2TT, UK. Corresponding author (\texttt{hduong@bham.ac.uk})}
\and Grigorios A. Pavliotis \thanks{Department of Mathematics, Imperial College London, UK (\texttt{g.pavliotis@imperial.ac.uk}}
\and Julian Tugaut\thanks{Universit\'{e} Jean Monnet, Institut Camille Jordan, 23, rue du
docteur Paul Michelon, CS 82301, 42023 Saint-\'{E}tienne Cedex 2,France (\texttt{julian.tugaut@univ-st-etienne.fr})}}
\begin{document}
\maketitle
\begin{abstract}
In this paper, we study multi-species stochastic interacting particle systems and their mean-field McKean-Vlasov partial differential equations (PDEs) in non-convex landscapes. \RV{Under general assumptions on non-convex confining and interaction potentials with polynomial growth}, we establish the well-posedness of the multi-species SDE system, prove propagation of chaos, deriving the corresponding coupled McKean–Vlasov PDE system in the mean-field limit. Our focus is on the long-time and asymptotic behaviour of the mean-field PDEs. \RV{For quadratic interaction potentials and under an appropriate structural assumption, which implies that the generator of each species is multiple of a common generator}, we show the existence and (non-) uniqueness of stationary solutions, study their linear stability \RV{and prove the existence of a phase transition at low noise strengths. For quadratic and symmetric interaction potentials (but no structural assumption),} we construct a free-energy functional that plays the role of a Lyapunov function for the mean-field PDE system.  Furthermore, we establish the convergence of solutions to the mean-field PDEs (and of their free energy) to a stationary state (and the corresponding free energy).
\end{abstract}
\Keywords multi-species interacting particle systems, McKean-Vlasov dynamics, propagation of chaos, invariant measures. Phase transitions, Free-energy functional
\subjclass 60H10; 35K55; 60J60; 60G10.

\tableofcontents

\section{Introduction}
\label{sec:intro}
\subsection{Multi-species interacting particle systems and the mean-field PDEs}
Interacting particle systems play a crucial role in the modeling of complex phenomena in physics, biology, finance, and engineering. In many real-world scenarios, interacting populations are heterogeneous, consisting of multiple species or types of agents with distinct characteristics and interaction rules. When different species influence one another through weak intergroup and intragroup interactions (i.e. when the strengths of the interactions are inversely proportional to the numbers of particles) and subject to thermal stochastically noises, they can be modeled using \textit{ multi-species McKean-Vlasov stochastic differential equations}. Furthermore, for large complex systems, where it is impossible (and not necessary) to trace all individual particles, it is more efficient to study their statistically collective behavior via their probability distributions, which describe the probability of finding the species at a given time and location.  These probability distributions can be obtained as the mean-field limits for the associated empirical measures when the number of particles tends to infinity. The mean-field dynamics, in turn, is described by a system of nonlinear, nonlocal Fokker-Planck-type partial differential equations (PDEs) (also called McKean-Vlasov equations). The connections from the (microscopic) interacting particle systems to the (macroscopic) PDEs and their mathematical analysis give rise to many challenging problems in statistical physics, stochastic analysis, and the theory of PDEs, and constitute an exciting, active with long history, research direction, of which the present work pertains. We refer the reader to the important survey articles \cite{golse2016dynamics, jabin2014review,jabin2017mean, serfaty2018systems, Diez_2022a, Diez_2022b} for additional information and bibliographical remarks; more references on specific topics will be given in the literature review in Section \ref{sec: literature review}. We now describe in detail the models that will be studied in this paper.

In this paper, we will consider the following McKean-Vlasov multi-species interacting SDEs and their mean-field limits PDEs:
\begin{subequations}
\label{eq: many SDEs}
\begin{align}
&\dd X_t^{i,k} =-\nabla V_k(X^{i,k}_t)\,\ddt-\frac{1}{N_n}\sum_{\ell=1}^M\sum_{j=1}^{N_n^\ell}\nabla F_{k\ell}(X^{i,k}_t-X^{j,\ell}_t)\,\ddt+\sigma_k \dd W^{i,k}_t,
\\& X^{i,k}(0)=  X^{i,k}_0,
\\& \text{for}~~ i=1,\ldots, N_n^k, \quad k=1,\ldots,M. \notag
\end{align}
\end{subequations}
We have used the following notation:  
\begin{itemize}
\item $M>0$, a positive integer, is the number of species; $\{N_n^k\}_{k=1,\ldots, M}$ are $M$ sequences of positive integers, which are the populations of the species. $N_n=\sum_{k=1}^M N_n^k$ is the total population.
\item $\{X^{i,k}\}_{i=1}^{N_n^k} \in 
(\mathbb{R}^d)^{N_n^k}$ are copies of species $k$, $k=1,\ldots, M$.      
\item $\{V_k\}_{k=1}^M$ are external potentials, $V_k:\mathbb{R}^d\mapsto \mathbb{R}$.
\item $\{F_{kk}\}_{k=1}^M$ are self-interacting potentials that describe interactions between individuals of the same species (intraspecies interactions), $F_{kk}:\mathbb{R}^d\mapsto \mathbb{R}$.
\item $\{F_{k\ell}\}_{k\neq \ell=1}^M$ are cross-interacting potentials representing the interactions between individuals belonging to different species $k$ and $\ell$ (interspecies interactions), $F_{k\ell}:\mathbb{R}^d\mapsto \mathbb{R}$.
\item $\{\sigma_k\}_{k=1}^M$ are the noise strengths, $\sigma_k>0$.
\item $\{W^{i,k}_t: t\geq 0\}_{i=1,\ldots,N_n^k,k=1,\ldots, M}$ are $d$-dimensional independent Wiener processes.
\item $X_0^{i,k}$ are the initial data, $X_0^{i,k}\in\mathbb{R}^d$. We assume chaotic initial data, i.e. that $X_0^{i,k} \sim \mu_0^k, \; i=1, \dots N_n^k, \; k=1, \dots M$ iid.
\end{itemize}
Thus, SDEs \eqref{eq: many SDEs} describe the motions of an interacting multi-species population under the influence of external forces, intra-species and inter-species interacting forces, and stochastic noises. We are also interested in the mean-field limit of \eqref{eq: many SDEs} when the population sizes $N^k_n$, which are parameterized by $n$, go to infinity as $n \rightarrow \infty$ . Suppose that
\begin{equation}
\label{eq: coefficients a}
    a_i:=\lim_{n\rightarrow\infty}\frac{N^i_n}{N_n}
\end{equation} 
exist for all $i=1,\ldots M$. The parameters $\{a_i\}_{i=1}^M$, which satisfy $0\leq a_i\leq 1, \sum_{i=1}^M a_i=1$, describe the proportion of the $i$-th species in the population. Then in the limit as the number of particles of each species goes to infinity, the interacting particle system $\{X^{i,k}_t\}$ converges, in a suitable sense, to  $\{\overline{X}^{i,k}_t\}$, the solution of the coupled (nonlinear in the sense of) McKean SDE (see Section \ref{subsec:well_posed} below)
\begin{subequations}
\label{eq: mean-field SDE}
\begin{eqnarray}
\dd\overline{X}^{i{\color{black},k}}_t &=&-\nabla V_k(\overline{X}^{i{\color{black},k}}_t)\,\dd t-\sum_{\ell=1}^M (a_\ell\nabla F_{k\ell}\ast \mu^{\ell}_t)(\overline{X}^{i{\color{black},k}}_t) \,\ddt+\sigma_{{\color{black}k}} \dd W^{i{\color{black},k}}_t,
 \\ \overline{X}^{i{\color{black},k}}_{t=0} & =& X^{i{\color{black},k}}_0 \sim \mu_0^{{\color{black}k}}, 
\end{eqnarray}
\end{subequations}
for ${\color{black}k}=1, \dots M$. In the above equation, $\mu_t^\ell$ is the marginal law at time $t$ of $\overline{X}^{{\color{black}i,}\ell}$ and $\mu_0^{\color{black}k}$ denotes the initial distribution (we assume that we have chaotic initial conditions). The mean-field limit can also be described using the associated empirical measures. More specifically, the associated empirical measures of the ${\color{black}k}$-th species is defined by
\begin{equation}
\label{eq: empirical measures}
\mu^{{\color{black}k}}_t:=\frac{1}{N_n^{\color{black}k}}\sum_{j=1}^{N_n^{\color{black}k}}\delta_{X^{j{\color{black},k}}_t},\quad {\color{black}k}=1,\ldots, M.
\end{equation}
Then, as $n\rightarrow \infty$, the empirical measures $\mu_t^{{\color{black}k,}n}$ will weakly converge to a probability measure $\mu_t^{{\color{black}k}}, \, {\color{black}k}=1,\dots {\color{black}M}$,  which are the solution of the coupled Mckean-Vlasov PDE system (this statement is a direct consequence of the so-called propagation of chaos property \cite{mckean1967propagation,sznitman1991topics}, see Section~\ref{subsec:well_posed} below):
\begin{subequations}
\label{eq: multi-species PDE}
\begin{eqnarray}
  \partial_t \mu_t^i & = & \div\Big[\Big(\nabla V_i+\sum_{j=1}^M a_j\nabla F_{ij}\ast\mu^j_t\Big)\mu^i_t \Big]+\frac{\sigma_i^2}{2}\Delta\mu^i_t, \quad i = 1, \dots M,
  \\ \mu_{t=0}^i &=& \mu_0^i, \quad i = 1, \dots M. 
\end{eqnarray}
\end{subequations}
This system of nonlinear, nonlocal partial different equations describes the evolution of the probability distributions of the species, in which the evolution of each distribution depends on the others via convolution terms.

In this paper, we are interested in the mathematical analysis of the many-particle interacting system~\eqref{eq: many SDEs} and their mean-field limit~\eqref{eq: mean-field SDE} or \eqref{eq: multi-species PDE} from various aspects and for \textit{non-convex confining potentials}. We discuss the well-posedness of \eqref{eq: many SDEs} and the convergence to the mean-field limits. More specifically, we will focus on the long-time and asymptotic behavior of the PDEs \eqref{eq: multi-species PDE}, including more specially, the existence and (non-) uniqueness of stationary solutions, their linear stability, the existence of a phase transition phenomena when the noise intensities are varied, as well as {\color{black} the description of the limiting (in long-time) set with possibly a convergence towards an equilibrium state if the set of invariant probability measures is discrete.}

Before stating our main results in the next section, we provide a review of the literature on related work on the interacting particle systems~\eqref{eq: many SDEs} and their mean field limit~\eqref{eq: mean-field SDE} and \eqref{eq: multi-species PDE}.
%
%
\subsection{Literature Review}
\label{sec: literature review}
As we have already mentioned, stochastic interacting particle systems and their mean field limit naturally arise in a variety of applications in statistical physics, biology, ecology, population dynamics and even the social sciences. There is now a huge body of literature on this topic; we comment below of a few papers that are most closely related to the present work.

\textit{On well-posedness and long-time behavior of the one-species and two-species mean-field PDEs.} The main motivation of this paper is to generalize some of the existing results on the long-time behavior for the one-species McKean-Vlasov PDEs in \cite{Tamura1984,yozo1987free,Dawson83,HT1,AOP,SPA,PT,JOTP,HT2,HT3,carrillo2020long,delgadino2021diffusive} and two species in \cite{DuongTugaut2020} to multi-species ($M>2$) settings. 

\textit{On well-posedness and long-time behavior of closely related multi-species mean-field PDEs.}
In \cite{potts2019spatial, giunta2022local,giunta2022detecting}, the authors study the following system
\begin{equation}
\label{eq: Potts model}
\partial_t\rho_i=D_i\Delta\rho_i +\div\Big(\rho_i \sum_{j=1}^M \gamma_{ij} (\nabla K\ast\rho_j)\Big), \quad i=1,\ldots, M,
\end{equation}
on the $d$-dimensional torus, with periodic boundary conditions. More precisely, in the one-dimensional case, for the top-hat kernels (that is, $K(x)=1/(2\delta)$ for $x\in (-\delta,\delta)$ and $K(x)=0$ otherwise) and under the assumption of  no self attraction or repulsion (i.e.,$i\neq j$), \cite{potts2019spatial} analyses pattern formation properties, fully analytically characterizing for $N=2$ and numerically for $N=3$. In \cite{giunta2022local} the assumption that $j\neq i$ is removed, allowing for self-attraction or repulsion, but the kernel $K$ is instead twice diﬀerentiable. Under these assumptions, the authors prove the global existence of a unique, positive solution in one spatial dimension and the local existence in arbitrary dimensions; they also propose an efficient scheme for numerically solving the equations. In \cite{giunta2022detecting}, under the symmetry assumption that $\gamma_{ij}=\gamma_{ji}$, the authors construct an energy functional for \eqref{eq: Potts model}, showing that for any initial data $u_0\in L_1(\mathbb{T}^M)$ the solution evolves over time towards a configuration that is a local minimizer of the energy functional.

In \cite{jungel2022nonlocal} the authors consider the system \eqref{eq: Potts model} with general pair-wise kernels of the form
\begin{equation}
\label{eq: Jungel model}
\partial_t\rho_i=D_i\Delta\rho_i +\div\Big(\rho_i \sum_{j=1}^M \nabla K_{ij}\ast\rho_j\Big), \quad i=1,\ldots, M.
\end{equation}
Under the assumption that these kernels are positive definite and in detailed balance, based on the construction of a suitable Lyapunov functional and entropy estimates, the authors prove the global existence of weak solutions, the weak-strong uniqueness (i.e., if a weak solution and a strong solution have the same initial data and appropriate regularities, then they coincide for all time), and the localization limit (the existence and uniqueness theorems hold for nondiﬀerentiable kernels). In \cite{carlier2016remarks}, the authors consider \eqref{eq: Jungel model} with general vector fields, which are not necessarily gradients of potentials.  Under suitable Lipschitz regularity conditions for vector fields, they show the existence and uniqueness of solutions \eqref{eq: Jungel model} using the framework of (perturbations of) Wasserstein gradient flows; we remark that this paper also allows for nonlinear diffusion terms. 

\textit{On the rigorous justification of the mean-field limit (propagation of chaos) of related multi-species stochastic interacting particle systems} 
The mean-field limits and propagation of chaos for systems of the form \eqref{eq: Jungel model} for various assumptions on the kernels have been studied by many authors; see, for instance, \cite{chen2019rigorous,carrillo2025propagation}.

\textit{On other related multi-species models in statistical physics and biology.} It is also worth mentioning other closely related multi-species models in statistical physics and biology employing various types of intra- and inter-species interaction potentials, see for instance \cite{chen2021rigorous, hecht2023multispecies} for multi-species cross-diffusion systems, \cite{barra2015multi,panchenko2015free} for multi-species spin-glass systems, \cite{wolansky2002multi,he2021multispecies,karmakar2020critical,Lin_2024, Lin2024b} for multi-species Patlak-Keller-Segel chemotaxis systems, as well as \cite{CAUVINVILA2024} for multi-species diffusion-Cahn–Hilliard systems.

\subsection{Assumptions and contributions}
\label{sec:contrib}
\RV{We present assumptions on the confining potentials $V_k$ ($k=1,\ldots, M)$, interaction potentials $F_{ij}$ ($1\leq i,j\leq M$), and strength of the noises $\sigma_k$ ($k=1,\ldots, M)$ that will be used in the subsequent analysis. In Table \ref{tab: conditions} we specify which assumptions are used for which result.}
\subsubsection{Assumptions}
\label{subsec:assump}

In this section, we present the assumptions that we make in this paper. We start with our assumption on the confining potentials.

\begin{assumption}\label{asp: assumption}
We make the following assumptions on the confining potentials.
\begin{description}

\item[(H1)] The functions $V_i$ are twice continuously differentiable for $i\in\{1,\ldots, M\}$.  
\item[(H2)] The potentials $V_i$, $i=1,\ldots, M$ are convex at infinity: $\displaystyle\lim_{|x|\to+\infty}y^T\nabla^2V_i(x)y=+\infty$ for all $y\in\R^d$ with $\|y\|=1$. 
\item[(H3)] There exists an integer $p\geq 2$ and a constant $C>0$ such that the following inequality holds: $\sum_{i=1}^M|\nabla V_i(x)|\leq C\left(1+|x|^{2p-1}\right)$.
\end{description}
\end{assumption}

Assumptions~\ref{asp: assumption} are taken to ensure the existence of a strong solution to the system~\eqref{eq: mean-field SDE}. Assumption (H1) is crucial since we will make repeated use of It\^{o}'s formula. This immediately implies that the functions $\nabla V_i$ are locally Lipschitz. Assumption (H2) is a standard dissipativity assumption. We point out that it implies the existence of $\theta_i>0$ such that 
\begin{equation}
\label{eq: V1}
(\nabla V_i(x)-\nabla V_i(y))(x-y)\geq -\theta_i|x-y|^2\quad\forall x,y \in \mathbb{R}^d.
\end{equation}
Finally,  (H3) is used to prove the existence of a solution to the mean-field SDE system \eqref{eq: mean-field SDE}. Indeed, to do so, we use a fixed point approach in a space with finite moments of appropriate orders.

\begin{assumption}
\label{severine}
For any $k\in\{1,\ldots,M\}$, the confining potential $V_k$ is an even polynomial function of the form $V_k(x)=-\frac{\theta_k^{(2)}}{2}x^2+\sum_{\ell=2}^{p_k}\frac{\theta_k^{(2\ell)}}{2\ell}x^{2\ell}$ with $\theta_k^{(2)}>0$, $\theta_k^{(2p_k)}>0$ and all the coefficients $\theta_k^{(2i)}\geq0$ for any $2\leq i\leq p_k-1$. Furthermore, $p_k\geq2$. 
\end{assumption}
This will allow us to employ the results obtained in~\cite{PT}. 
We now give the main assumptions about the interaction potentials.
\begin{assumption} (polynomial interactions)
\label{asp: interaction2}
\begin{itemize}
\item For $i=1,\ldots, M$, $\nabla F_{ii}$ is odd and increasing with polynomial growth functions.
\item For $1\leq i\neq j\leq M$, $\nabla F_{ij}$ is Lipschitz.
 \end{itemize}
\end{assumption}
\begin{assumption}(quadratic interactions)
\label{asp: assumptionbis}
\begin{itemize} 
\item For any $1\leq i,j\leq M$, there exists $\alpha_{ij}\in\bRb$ such that $F_{ij}(x)=\frac{\alpha_{ij}}{2}|x|^2$.
\item For $i=1,\ldots, M$, $\alpha_{ii}>0$.
\end{itemize}
\end{assumption}

\begin{assumption}
\label{severine2}
For all $k,\ell\in\{1,\ldots,M\}$, we have $\alpha_{k\ell} \geq 0$. 
\end{assumption}
For quadratic interaction potentials, the system of coupled McKean-SDEs that we consider in this paper is an extension of the Desai--Zwanzig model~\cite{Dawson83}. Some of our results can be extended to the case of nonlinear interactions; we will comment on when such extensions are possible in later sections. It is also possible to relax the assumption that the diagonal elements $\alpha_{ii}$'s are positive, which is analogous to the assumption that the interaction potential is ferromagnetic in the single-species case~\cite{shiino1987}. However, we will not consider this in this paper. We emphasize that the assumption that the interaction potentials are quadratic is used in the study of phase transitions. Well-posedness of the mean-field PDEs and uniform propagation of chaos hold for a broader class of interaction potentials.

For some of our results, we will need to assume that the interaction potentials are symmetric:

\begin{assumption}
\label{asp: assumptionter}
For any $1\leq i,j\leq M$, $F_{ij}=F_{ji}.$
\end{assumption}
This symmetry assumption implies the existence of a free-energy functional and, consequently, of a gradient structure, and will be used in the {\color{black} description of the limiting (in long-time) set with possibly a convergence towards an equilibrium state if the set of invariant probability measures is discrete}; see Section~\ref{sec:converg}.

An alternative to Assumption~\ref{asp: assumptionter} is the following; we will refer to it as the \textit{structural assumption}.

\begin{assumption}
\label{asp: assumptionterbis}
For any $1\leq i,j\leq M$, $\frac{\alpha_{ij}}{\sigma_i^2}=\frac{\alpha_{1j}}{\sigma_1^2}$ and $\frac{V_i}{\sigma_i^2}=\frac{V_1}{\sigma_1^2}$. 
\end{assumption}
In particular, this assumption implies that the effect on the individual $i$ of the individual $j$ depends only on the species $j$. We immediately deduce the existence of coefficients $\alpha_j, j=1, \dots, M$ such that
\[
\frac{\alpha_{ij}}{\sigma_i^2}=\frac{\alpha_j}{\sigma^2}\,,
\]
where $\sigma^2>0$. Similarly, for the confining potentials, we deduce that there exists a potential $\overline{V}$ such that
\[
\frac{V_i}{\sigma_i^2}=\frac{\overline{V}}{\sigma^2}\,.
\]

Under the structural assumptions and for quadratic interaction potentials, the mean-field PDE system takes a particularly simple form; see the remark below. 
\begin{remark}[Multi-species dynamics under the structural assumption]
Under the structural assumption, the multi-species McKean-Vlasov coupled PDE system \eqref{eq: multi-species PDE} takes the form:
\begin{align}
\nonumber \partial_t \mu_t^i&=\div\Big[\Big(\nabla V_i+\sum_{j=1}^M a_j \alpha_{ij} \left(x-\int y \mu^j_t \, dy \right)\Big)\mu^i_t \Big]+\frac{\sigma_i^2}{2}\Delta\mu^i_t
\\&=\sigma_i^2\left\{\div\Big[\Big(\frac{\nabla V_i}{\sigma_i^2}+\sum_{j=1}^M a_j \frac{\alpha_{ij}}{\sigma_i^2} \left(x-\int y \mu^j_t \, dy \right)\Big)\mu^i_t \Big]+\frac{1}{2}\Delta\mu^i_t\right\}  \nonumber
\\&=\sigma_i^2\left\{\div\Big[\Big(\frac{\nabla\bar{V}}{\sigma^2}+\sum_{j=1}^Ma_j\frac{\alpha_j}{\sigma^2}(x-\int y \mu^j_t \, dy)\Big)\mu^i_t\Big]+\frac{1}{2}\Delta \mu^i_t\right\}  \nonumber
  \\&=: \frac{\sigma_i^2}{\sigma^2}\mathcal{L}^* \mu^i_t,  \label{e:structural}
\end{align}
where
\begin{equation} \label{e:struct_generator}
\mathcal{L}^* \mu=\div\Big[\Big(\nabla\bar{V}+\sum_{j=1}^Ma_j\alpha_j(x-m_j)\Big)\mu\Big]+\frac{\sigma^2}{2}\Delta \mu.
\end{equation}
Here, we have used the notation $m_j = \int y \mu^j_t \, dy$. Thus, the equations for the $\mu_t^i$'s differ from each other by a time scaling, which depends on each particular species. Note that $\mathcal{L}$--the $L^2-$adjoint of the operator in~\eqref{e:struct_generator} is precisely the generator of the one-species Desai--Zwanzig model with an effective potential $\bar V$. The above formula implies that all species have the same stationary distribution, which is the stationary state of the one-species Desai--Zwanzig model with confining potential $\bar V$ that was studied in~\cite{Dawson83}.
\end{remark}

Finally, the following \emph{synchronization} assumption ensures that, in the zero-noise 
limit, the stationary measures are of Dirac form. 
\begin{assumption}
\label{asp: assumption:synchronization}
For any $k\in\{1,\cdots,M\}$, $\theta_k<\sum_{\ell=1}^M\RV{a_{\ell}}\alpha_{k\ell}$, where the $\theta_k$'s are defined in~\eqref{eq: V1}.
\end{assumption}
The terminology ``synchronization'' has been systematically used by the third author and originates from the literature on nearest-neighbour systems, see, for example, \cite{BFG1,BFG2}. This term is used to describe the fact that in the mean-field limit and for the single-species case, the only minimizer of the potential acting on the whole system of particles seen as a unique diffusion in $(\bRb^d)^N$ are of the form $(z,\cdots,z)$. In particular, when the zero-noise system reaches a stable equilibrium, all the particles are in the same position. 

\RV{Finally, for the results of Section~\ref{sec:converg} that do not require the interaction potentials to be quadratic, we will use instead the following assumption.}
\begin{assumption}[\RV{radial polynomial interactions}]
\label{asp: interaction3}
\RV{For any $1\leq i,j\leq M$, we have $F_{ij}(x)=G_{ij}(|x|)$, where each $G_{ij}$ is a polynomial even function of order less than $q\in\bNb^*$.}
\end{assumption}

\subsubsection{Main results and novelties}
As already mentioned in the first bullet point of Section \ref{sec: literature review}, the main motivation of this paper is to generalize some of the existing results on the long-time behaviour for the one-species and two-species McKean-Vlasov PDEs to arbitrary multi-species ($M>2$) settings (note that the two-species case studied in \cite{duong2016stationary} provides only a preliminary result on the number stationary solutions under special conditions). This will significantly broaden the applicability of the models since 
many applications in biology, population dynamics and socio-economical dynamics necessarily contain more than two interacting species \cite{clifford1973model,garnier2017consensus,potts2019spatial}. As will also be shown in the subsequent sections, from a mathematical point of view, the generalization from one or two-species to an arbitrary number of species is highly nontrivial due to the more complex inter-species interactions between different species. Compared to the existing aforementioned results discussed in Section~\ref{sec: literature review} for the models \eqref{eq: Potts model} and \eqref{eq: Jungel model}, the key novelty of the present work is to take into account the non-convex potentials, revealing the non-trivial influence of the interplay between these non-convex potentials and the intra- and inter- species interaction potential on the long-time behaviour of the PDEs system \eqref{eq: multi-species PDE}. The main results of the papers \RV{can be grouped} as follows. \RV{In Table \ref{tab: conditions}, we clarify which conditions are used for each result.
}

\medskip

\RV{\textbf{Results that hold for general multiple dimensional multi-species systems:}}
\begin{itemize}
    \item Theorem \ref{thm: well-posedness} shows the well-posedness of the {\color{black}McKean mean-field limit SDEs} \eqref{eq: mean-field SDE}.
    \item Theorem \ref{theo: PoC} establishes the propagation of chaos for the particle systems, thus the convergence of the associated empirical measures to the mean-field limits.
    \item Proposition \ref{jugnot} provides a characterization for the stationary solutions as a fixed point of a nonlinear map, which reduces to a system of equations (self-consistency equations) for the mean values in the case of quadratic interaction potentials.
\end{itemize}
\RV{\textbf{Results that are valid in one dimension, require quadratic interaction potentials and assumptions on the noise strength:}}
\begin{itemize}
    \item Proposition \ref{thunder} shows the existence of a stationary state close to each so-called ``candidate'' when the diffusion densities (temperatures) are small enough.
    \item Theorem \ref{lola} proves the existence and uniqueness of a stationary solution in the large-noise (temperature) regime.
\end{itemize}
\RV{\textbf{Results that are valid in one dimension, require quadratic interaction potentials and the Structural Assumption:}}
\begin{itemize}    
\item Theorem \ref{negan} reveals the non-uniqueness of stationary states and a phase transition phenomenon, that is, there is a unique (respectively, non-unique) stationary solution when the temperature is greater or equal (resp., smaller) to a critical temperature. The equation determining the critical temperature is also given.
\item In Proposition~\ref{prop:null_space_multi} we look into linear stability analysis and we study the null space of the linearized McKean-Vlasov operator.
\end{itemize}
\RV{\textbf{Results that are valid in one dimension and require quadratic and symmetric interactions:}}
\begin{itemize}
\item Theorem \ref{chichi} describes the topology of the set of limiting values of the solutions of \eqref{eq: multi-species PDE}.
\item Proposition \ref{thm:fr:subconv} shows the convergence of {\color{black}some sequences of } solutions of \eqref{eq: multi-species PDE} to a stationary measure.
\end{itemize}
\RV{\textbf{Results that are valid in one dimension and symmetric interactions:}}

Proposition \ref{jigsaw} proves the convergence of the free-energy  of the solutions of \eqref{eq: multi-species PDE} to that of the  stationary measure. 
\medskip
\RV{
\begin{table}[ht!]
\begin{tabular}{ *{7}{|c}|} 
\hline
&content&confining&interaction &dimension& noise\\ \hline
  Th. \ref{thm: well-posedness} &well-posedness of {\color{black}McKean} SDE & A. \ref{asp: assumption}&A. \ref{asp: interaction2} &arbitrary  &arbitrary\\
  \hline
  Th. \ref{theo: PoC} & propagation of chaos &A. \ref{asp: assumption} &  A. 
  \ref{asp: interaction2} &arbitrary &arbitrary\\
  \hline
Prop. \ref{jugnot} &self-consistency eqn.  & A. \ref{asp: assumption}&A. \ref{asp: interaction2} &arbitrary &arbitrary\\
  \hline
  Prop. \ref{thunder}&existence of SS  & A. \ref{asp: assumption}&A. \ref{asp: assumptionbis}  &one &small\\
  \hline
  Th. \ref{lola}
& existence and uniqueness of SS  & A. \ref{asp: assumption},\ref{severine},\ref{severine2}& A. \ref{asp: assumptionbis} & one &large\\
  \hline
  Th. \ref{negan}
&phase transition  & A. \ref{asp: assumption},\ref{severine},\ref{severine2},\ref{asp: assumptionterbis}& A. \ref{asp: assumptionbis} &one  &critical\\
\hline
Prop.~\ref{prop:null_space_multi} 
&linear stability  &A. \ref{asp: assumption},\ref{severine},\ref{severine2},\ref{asp: assumptionterbis} &A. \ref{asp: assumptionbis}  &one & arbitrary\\
\hline
Th. \ref{chichi}
&limiting sets  &A. \ref{asp: assumption}  &A. \ref{asp: assumptionbis}  ,\ref{asp: assumptionter}&one &arbitrary\\
\hline
Prop. \ref{thm:fr:subconv}&conv. to a SS &A. \ref{asp: assumption}  &A. \ref{asp: assumptionbis}  ,\ref{asp: assumptionter}&one &arbitrary\\
\hline
Prop. \ref{jigsaw}
& conv. of the free energy  & A. \ref{asp: assumption}  &A. \ref{asp: assumptionter}, \ref{asp: interaction3} &one &arbitrary\\
\hline
\end{tabular}
\caption{Specification of the conditions used in each result. We use the following shorthands: Th.: Theorem, Prop.: Proposition, McKean SDE: McKean-Vlasov multi-species stochastic differential equations, SS: stationary state, A.: assumption(s), confining: confining potential, interaction: interaction potential, noise: strength of noise.}
\label{tab: conditions}
 \end{table}
}
\medskip

\RV{The assumptions of quadratic and symmetric interaction potentials are commonly adopted in biological models; see, for instance, the Desai--Zwanzig model~\cite{Dawson83} and the cross-diffusion models in \cite{chen2021rigorous,hecht2023multispecies}.
Although the structural assumption used in Theorem 3.2 and Proposition 4.1 is rather restrictive, as it effectively reduces the multi-species model to a single-species setting, it enables us to derive the phase transition result for the former from the corresponding analysis of the latter. All other results are applicable to the multi-species setting whose proofs require technical improvements from the two-species analysis in \cite{DuongTugaut2020}.}
%
%
\subsection{Outline of the paper}
The rest of the paper is organized as follows. 
In Section~\ref{sec:prelim}, we present some preliminary results on the multi-species interacting particle system and its mean-field limit, including the well-posedness of the multi-species McKean-Vlasov SDE, propagation of chaos, characterization of the stationary states and the associated free-energy functional. In Section~\ref{sec:inv_meas} we study the invariant probability measures (the existence in the small-noise case, the existence and uniqueness when the diffusion coefficients are large, the phase transition under the structural assumptions). In Section~\ref{sec:lin_stab} we study the linear stability analysis of stationary states. In Section~\ref{sec:converg} we show that, under appropriate assumptions, solutions of the mean-field coupled PDE system converge to a stationary state. Finally, in the appendices, we collect some technical results: in Appendix~\ref{app:pogacar} we study the algebraic equations characterizing the stationary states in the limit of vanishing noise. 
In Section~\ref{app:quadr}, we prove that the quadratic confining potentials may lead to a non-uniqueness of the steady state. 

Finally, in Appendix~\ref{app:converg} a technical result which is central in {\color{black}describing the limiting set (in the long time)} is proved.
%
%
\section{Preliminaries}
\label{sec:prelim}

In this section, we present some basic properties of the multi-species interacting particle system~\eqref{eq: many SDEs}, of the mean-field McKean-Vlasov PDEs~\eqref{eq: multi-species PDE} and of the McKean SDEs~\eqref{eq: mean-field SDE} that will be needed later on. In Section~\ref{subsec:well_posed} we present a basic well-posedness result for the mean-field SDEs; in Section~\ref{subsec:prop_chaos} we present a basic propagation of chaos theorem. In Section~\ref{subsec:stat_states} we characterize all stationary states of the mean-field dynamics. Finally, in Section~\ref{subsec:free_energy} we derive the free-energy of the mean-field system, under the symmetry Assumption~\ref{asp: assumption}, and we  show that it is non-increasing and lower-bounded.

\subsection{Well-posedness of the multi-species SDEs}
\label{subsec:well_posed}

We start by stating a basic well-posedness result for the multi-species McKean{\color{black}-Vlasov} SDE system.

\begin{theorem}
\label{thm: well-posedness}
Suppose that Assumptions \ref{asp: assumption} and \RV{\ref{asp: interaction2}} hold and that the initial conditions $\{X_i(0)\}$ $i=1,\ldots M$ satisfy $\E(|X_i(0)|^{8p^2})<\infty$ where $p\geq 2$ is defined in (H3) of Assumption \ref{asp: assumption}. Then the system \eqref{eq: mean-field SDE} admits a unique strong solution {\color{black}on the whole time interval $\bRb_+$}. Furthermore, the $8 p^2$ moments are uniformly bounded in time for any index $k$, with the upper bound depending only on the moments of the initial conditions.
\end{theorem}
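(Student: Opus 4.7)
The plan is to establish existence and uniqueness of~\eqref{eq: mean-field SDE} by a Picard-type fixed-point argument, exploiting the key simplification granted by the quadratic interaction assumption (H4). Since $\nabla F_{i\ell}(x)=\alpha_{i\ell}x$, the convolution reduces to
\[
(a_\ell\nabla F_{i\ell}\ast\mu^\ell_t)(\overline X^i_t)=a_\ell\alpha_{i\ell}\bigl(\overline X^i_t-m_\ell(t)\bigr),\qquad m_\ell(t):=\int y\,\mu^\ell_t(\dd y),
\]
so the nonlinear McKean drift depends on the law flow only through the mean curves $\mathbf m(t)=(m_1(t),\ldots,m_M(t))\in\bRb^{dM}$. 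The fixed-point problem therefore reduces from one on measure flows to one on continuous paths in $\bRb^{dM}$. Given a candidate $\mathbf m\in C([0,T];\bRb^{dM})$, the decoupled SDEs
\[
\dd Y^{i,\mathbf m}_t=-\nabla V_i(Y^{i,\mathbf m}_t)\,\dd t-\sum_{\ell=1}^M a_\ell\alpha_{i\ell}\bigl(Y^{i,\mathbf m}_t-m_\ell(t)\bigr)\,\dd t+\sigma_i\,\dd W^i_t,\qquad Y^{i,\mathbf m}_0=X^i_0,
\]
have locally Lipschitz coefficients by (H1) and are dissipative at infinity by (H2) together with (H5). Non-explosion via a Khasminskii-type Lyapunov test combined with moment control (It\^o applied to $|Y^{i,\mathbf m}_t|^{2k}$, using (H3) and~\eqref{eq: V1}) yields a unique strong global solution. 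Set $\Phi(\mathbf m)_i(t):=\EE[Y^{i,\mathbf m}_t]$.

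To prove that $\Phi$ is a contraction on $C([0,T_0];\bRb^{dM})$ for $T_0$ small, take two inputs $\mathbf m,\tilde{\mathbf m}$, synchronously couple the corresponding solutions using a common Brownian motion and initial condition, and apply It\^o to $|Y^{i,\mathbf m}_t-Y^{i,\tilde{\mathbf m}}_t|^2$. The one-sided bound~\eqref{eq: V1} together with the linearity of the interaction gives
\[
\tfrac{d}{dt}\EE\bigl|Y^{i,\mathbf m}_t-Y^{i,\tilde{\mathbf m}}_t\bigr|^2\le C_1\,\EE\bigl|Y^{i,\mathbf m}_t-Y^{i,\tilde{\mathbf m}}_t\bigr|^2+C_2\sum_{\ell=1}^M\bigl|m_\ell(t)-\tilde m_\ell(t)\bigr|^2.
\]
Gr\"onwall then delivers contraction on $[0,T_0]$ with constants independent of the initial data, so iterating over successive intervals of length $T_0$ extends the unique fixed point, and therefore the unique strong solution of~\eqref{eq: mean-field SDE}, to $\bRb_+$.

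For the uniform-in-time moment bound, I apply It\^o's formula to $|\overline X^i_t|^{8p^2}$ along the solution. By (H2), the drift contribution $-8p^2|\overline X^i_t|^{8p^2-2}\,\overline X^i_t\cdot\nabla V_i(\overline X^i_t)$ is eventually of order $-c|\overline X^i_t|^{8p^2+2p-2}$; this term absorbs both the It\^o correction $\tfrac12\sigma_i^2(8p^2)(8p^2-1)|\overline X^i_t|^{8p^2-2}$ and the interaction cross-terms $|\overline X^i_t|^{8p^2-1}|m_\ell(t)|$ (bounded via H\"older and a preliminary low-moment control of $|m_\ell(t)|$, using (H5) to obtain the favourable sign on the diagonal). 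This yields a differential inequality of the form $\tfrac{d}{dt}\EE|\overline X^i_t|^{8p^2}\le -c_1\,\EE|\overline X^i_t|^{8p^2+2p-2}+c_2$, whose solutions are bounded uniformly in $t\ge 0$ by a constant depending only on $\EE|X^i_0|^{8p^2}$.

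The main obstacle will be the polynomial bookkeeping: each pass of It\^o-plus-H\"older on the squared-difference computation used in the contraction step raises the effective moment order by $2p-2$ because of the $|x|^{2p-1}$ growth in (H3), and the inter-species coupling through the $m_\ell$ prevents estimating each species in isolation. The reservoir of $8p^2$ initial moments is precisely what supplies enough margin for the H\"older chain to close while the convexity-at-infinity term still dominates; tracking the exponent carefully across the fixed-point iteration is the delicate part.
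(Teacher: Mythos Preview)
Your proposal is correct and follows essentially the same fixed-point strategy that the paper invokes (it cites \cite[Theorem~2.1]{DuongTugaut2020} and omits details). The one noteworthy difference is that you exploit the quadratic interaction to reduce the fixed point to one on mean curves in $C([0,T];\bRb^{dM})$, whereas the cited argument works on a larger function space of measure-valued (or moment-valued) paths so as to accommodate polynomial, not just quadratic, interactions---a generality the paper explicitly flags. Your finite-dimensional reduction is cleaner for the case at hand; the paper's route buys the extension to non-quadratic $F_{ij}$ with polynomial growth at the cost of a heavier function space.
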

\begin{proof}
\RV{The theorem is proved for $M=2$ in~\cite[Theorem 2.1]{DuongTugaut2020} which employs the method developed earlier in \cite{benachour1998nonlinear} and relies on three main ingredients: (1) transforming the system \eqref{eq: many SDEs} to a fixed point problem associated to a transformation in an appropriate functional space, (2) showing that the transformation is a continuous contraction to obtain the existence and uniqueness of a strong solution locally in time and (3) expanding the local solution to a global one. }

\RV{The proof for the case $M>2$ proceeds analogously. We therefore provide only a sketch, highlighting the aspects that require extension.} 

\RV{\textbf{Step 1 (construction of the transformation and functional space)}. Following \cite{DuongTugaut2020} we consider the following space of functions
\begin{equation*}
\Lambda_T:=\Lambda_T^1\cap\Lambda_T^2\cap\Lambda_T^3\,,
\end{equation*}
where the three spaces of functions $\Lambda_T^1$, $\Lambda_T^2$ and $\Lambda_T^3$ are defined by
\begin{equation*}
\Lambda_T^1:=\left\{b:[0;T]\times\bRb\longrightarrow\bRb^d\,\,\left|\right.\,\,x\mapsto b(s,x)\mbox{ is locally Lipschitz uniformly in }s\right\}\,,
\end{equation*}
where the parameter of Lipschitz may depend on $b$;
\begin{equation*}
\Lambda_T^2:=\left\{b:[0;T]\times\bRb\longrightarrow\bRb^d\,\,\left|\right.\,\,x\mapsto b(s,x)\mbox{ is increasing and }b(s,x)-b(s,y)\geq\xi_1(x-y)+\xi_0\right\}\,,
\end{equation*}
where $\xi_1>0$, $\xi_0\in\bRb$ and $x\geq y$; and
\begin{equation*}
\Lambda_T^3:=\left\{b:[0;T]\times\bRb\longrightarrow\bRb^d\,\,\left|\right.\,\,\left|\left|b\right|\right|_T<\infty\right\}\,,
\end{equation*}
where for a function $b:\bRb_+\times\bRb\rightarrow\bRb^d$, we define
\begin{equation*}
\left|\left|b\right|\right|_T:=\sup_{0\leq s\leq T}\sup_{x\in\bRb}\left(\frac{\left|b(s,x)\right|}{1+|x|^{2q}}\right).
\end{equation*}
The space $\Lambda_T$ is equipped with the norm $||.||_T$. 
Now we define
$F_T:=(\Lambda_T)^{M^2}$ equipped with the norm 
\begin{equation*}
\left|\left|b\right|\right|_T^F:=\sum_{i,j=1}^{M}\left|\left|b_{ij}\right|\right|_T\,,
\end{equation*}
where $b:=(b_{ij})_{i,j=1}^{M}$. This will be the functional space for the multi-species model. We next define a transformation $\Gamma$ from $F_T$ to $F_T$ by
\begin{equation}
\label{Gammab}
[\Gamma b]_{k\ell}(x):=a_k\mathbb{E}\Big[\nabla F_{k\ell}(x-X^\ell_t)\Big],    
\end{equation}
where $\{X^k\}$ is a strong solution to the following SDE:
\begin{equation*}
dX^k_t=-\nabla V_k (X^k_t)\,dt-\sum_{\ell=1}^M b_{k\ell}(X^k_t)\,dt+\sigma_k dW^k_t.   
\end{equation*}
The existence of solutions to the above SDE is a consequence of \cite[Theorem 10.2.2]{stroock2007multidimensional}. The key observation is that a solution to the mean-field SDE system \eqref{eq: mean-field SDE} is a fixed point of the transformation $\Gamma$.}

\RV{\textbf{Step 2 (existence and uniqueness of a local solution)}. To show the existence and uniqueness of a local solution, we show that the transformation $\Gamma$ is a continuous contraction on $F_T$. The fact that $F_T$ is a transformation from $F_T$ to itself follows from Assumptions \ref{asp: assumption} and \ref{asp: interaction2} which ensure that $
[\Gamma b]_{k\ell} \in \Lambda_T$ for all $1\leq k,\ell\leq M$. The continuity of $\Gamma$ follows from estimates of the form 
\begin{equation}
\label{Gammabc}
\|[\Gamma b]_{k\ell}-[\Gamma c]_{k\ell}\|_T\leq C\sqrt{T}\,\|b_{k\ell}-c_{k\ell}\|_T,  \end{equation}
for $b,c\in F_T$, $1\leq k,\ell\leq M$, where $C$ is a constant depending the $8p^2$-moments of the initial data. The upper bound can be deduced from \eqref{Gammabc} and Assumptions \ref{asp: assumption} and \ref{asp: interaction2}. Since the aforementioned estimates are  coordinate-wise estimates, their proofs are similar to $M=2$ case. We refer to \cite[Lemma 2.8 \& Lemma 2.9]{DuongTugaut2020} for the detailed calculations. The contraction of $\Gamma$ and local-in-time existence of a fixed point (thus a solution of \eqref{eq: mean-field SDE}) is then deduced from estimate \eqref{Gammabc} by choosing $T<T_0$ sufficiently small.
}

\RV{\textbf{Step 3 (from local- to global-in-time solution)} The local-in-time solution obtained in Step 2 is then extended to a global one by controlling its moments. We refer to [Theorem 2.1]\cite{DuongTugaut2020} for the detailed argument.
}
\end{proof}
\subsection{Propagation of chaos}
\label{subsec:prop_chaos}

The well-posedness, i.e. existence and uniqueness of a strong solution, together with moment bounds, of the multi-species interacting particle system follows from standard results on SDEs, e.g.\cite[Thm. 10.2.2]{stroock2007multidimensional} or~\cite[Thm. 3.1.1]{Rockner2007}. We note that Eqn~\eqref{eq: many SDEs} is an Itô diffusion on $\bRb^{dN_n}$, with appropriate growth conditions on the drift and, therefore, standard SDE theory applies.

We now state a standard propagation of chaos result that provides us with the link between the interacting particle system~\eqref{eq: many SDEs} and the multi-species McKean SDE~\eqref{eq: mean-field SDE}.

\begin{theorem}[Propagation of chaos]
    \label{theo: PoC}
Under the same assumption as in Theorem \ref{thm: well-posedness}, if moreover the two SDEs are driven by the same Brownian motion, then for $T<\infty$, we have
\begin{equation}
\label{eq: E-sup}
\lim\limits_{n\to\infty}\E\Big[\sup\limits_{t\in[0,T]}\big(X_t^{i,k}-\overline{X}_t^{i,k}\big)^2\Big]=0\quad \text{for}\quad k=1,\ldots, M\quad\text{and all}\quad i\geq 1.
\end{equation}
\end{theorem}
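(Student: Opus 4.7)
The plan is the classical synchronous coupling argument à la Sznitman, adapted to the multi-species setting. For each species $k\in\{1,\dots,M\}$ and each particle index $j$, build the McKean SDE~\eqref{eq: mean-field SDE} driven by the same Brownian motion $W^{j,k}$ and started from the same initial value as $X^{j,k}$; by Theorem~\ref{thm: well-posedness} the processes $\{\overline{X}^{j,k}\}_{j}$ are i.i.d.\ with marginal $\mu_t^k$ and enjoy uniform-in-time bounded $8p^2$-moments. Set $e^{j,k}_t := X^{j,k}_t-\overline{X}^{j,k}_t$. Since both SDEs carry the same additive noise $\sigma_k\,dW^{j,k}$, the diffusion terms cancel, so $e^{j,k}_t$ solves a pathwise ODE with continuous drift and $|e^{j,k}_t|^2$ is pathwise $C^1$. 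This is the key observation that allows the supremum in~\eqref{eq: E-sup} to be controlled by a time-integral of the derivative, thereby bypassing Burkholder--Davis--Gundy.

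Next, bound $\tfrac{d}{dt}|e^{j,k}_t|^2$. The confining contribution is $-2 e^{j,k}_t\cdot(\nabla V_k(X^{j,k}_t)-\nabla V_k(\overline{X}^{j,k}_t)) \leq 2\theta_k|e^{j,k}_t|^2$ by the one-sided estimate~\eqref{eq: V1} coming from (H2). Using (H4) to write $\nabla F_{k\ell}(x)=\alpha_{k\ell}x$, the interaction-term difference becomes linear and equals
\[
\sum_{\ell=1}^M \alpha_{k\ell}\bigg[\frac{1}{N_n}\sum_{i=1}^{N_n^\ell}(X^{j,k}_t-X^{i,\ell}_t) - a_\ell\Big(X^{j,k}_t-\int y\,\mu_t^\ell(dy)\Big)\bigg].
\]
Writing $\tfrac{1}{N_n}=\tfrac{N_n^\ell}{N_n}\cdot\tfrac{1}{N_n^\ell}$ and inserting $\pm\tfrac{1}{N_n^\ell}\sum_i\overline{X}^{i,\ell}_t$ decomposes the bracket into three pieces: a \emph{coupling} term linear in $e^{j,k}_t$ and $\tfrac{1}{N_n^\ell}\sum_i e^{i,\ell}_t$; an \emph{i.i.d.\ fluctuation} term $\tfrac{1}{N_n^\ell}\sum_i\overline{X}^{i,\ell}_t-\int y\,\mu_t^\ell(dy)$ whose second moment is $O((N_n^\ell)^{-1})$ by the variance formula for averages of i.i.d.\ variables (using the uniform moment bound from Theorem~\ref{thm: well-posedness}); and a deterministic \emph{population-weight} error of order $|a_\ell-N_n^\ell/N_n|$, vanishing by~\eqref{eq: coefficients a}.

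Setting $u_k(t):=\E|e^{1,k}_t|^2$, which by exchangeability within species equals $\E|e^{j,k}_t|^2$ for every $j$, applying Cauchy--Schwarz to the cross terms and summing the pathwise inequality over $k$ yields, after integration in time and taking expectations, the Gronwall-type bound
\[
\sum_{k=1}^M u_k(t)\leq C\int_0^t\sum_{k=1}^M u_k(s)\,ds+\varepsilon_n,\qquad \varepsilon_n := C_T\sum_{\ell=1}^M\Big(\frac{1}{N_n^\ell}+\Big|a_\ell-\frac{N_n^\ell}{N_n}\Big|^{2}\Big),
\]
with $\varepsilon_n\to 0$. Gronwall's lemma delivers $\sup_{t\leq T}\sum_k u_k(t)\to 0$. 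The same computation applied to $\sup_{s\leq t}|e^{j,k}_s|^2\leq\int_0^t|\tfrac{d}{ds}|e^{j,k}_s|^2|\,ds$ (licit because of the noise cancellation) upgrades the bound to the supremum-inside-expectation form~\eqref{eq: E-sup}.

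The main difficulty is purely organisational: one must track the $M\times M$ matrix of species-to-species coupling constants $\alpha_{k\ell}$ through the estimate and close it into a single scalar Gronwall inequality. Assumption (H4) is essential here, since the quadratic form of $F_{k\ell}$ makes the interaction-error exactly affine in the differences $e^{i,\ell}$ and $e^{j,k}$, avoiding any Lipschitz constant depending on the moments of the particle positions. The scheme extends to non-quadratic $F_{k\ell}$ under a global Lipschitz (or one-sided Lipschitz) assumption on $\nabla F_{k\ell}$, at the price of additional constants but no conceptual change.
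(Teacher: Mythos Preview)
Your proposal is correct and follows precisely the approach the paper has in mind: the paper does not spell out a proof but simply refers to \cite[Thm.~1.2]{DuongTugaut2020} (the $M=2$ case) and notes that the extension is immediate under Assumptions~\ref{asp: assumption} and~\ref{asp: assumptionbis}, which is exactly the Sznitman-type synchronous coupling you have written out in detail. Your decomposition of the interaction error into coupling, i.i.d.\ fluctuation, and population-weight pieces, together with the observation that the common additive noise makes $|e^{j,k}_t|^2$ pathwise $C^1$, is the standard route and matches what that reference does.
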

\begin{proof}
\RV{The proof of this theorem extends that of \cite[Theorem 1.2]{DuongTugaut2020} for the case $M=2$. We therefore only sketch the main steps.}

\RV{\textbf{Step 1 (second and fourth moments estimates)}. Let $X^{i,k}_t$ and $\overline{X}^{i,k}_t$ be solutions to \eqref{eq: many SDEs} and \eqref{eq: mean-field SDE} respectively. If $\{N^k_n\}$ tend to $+\infty$ simultaneously, as $n\rightarrow\infty$, but $\frac{N^k_n}{N_n}$ is a constant, then for any $1\leq k\leq M$, it holds that
\begin{equation*}
\sup\limits_{t\in [0,T]}\E\bigg[\Big(X_t^{i,k}-\overline{X}_t^{i,k}\Big)^2\bigg]\leq\frac{C}{N^k_n}\quad\text{and}\quad\sup\limits_{t\in [0,T]}\E\bigg[\Big(X_t^{i,k}-\overline{X}_t^{i,k}\Big)^4\bigg]\leq\frac{C}{(N^k_n)^2},
\end{equation*}
where $C>0$ is a constant.
}

\RV{\textbf{Step 2 (Estimate of $\big(X_t^{i,k}-\overline{X}_t^{i,k}\big)^2$)}
\begin{align*}
\big(X_t^{i,k}-\overline{X}_t^{i,k}\big)^2
&=-2\int_0^t (\nabla V_k(X_s^{i,k})-\nabla V_k(\overline{X}_s^{i,k}))\cdot (X_s^{i,k}-\overline{X}_s^{i,k}) \,ds\notag
\\&\quad-\frac{2}{N_n}\int_0^t \sum_{j=1}^{N_n}\nabla F_{kk}(X_s^{i,k}-X_s^{j,k})-a_k(\nabla F_{kk}\ast\mu^k_s)(\overline{X}_s^{i,k})\cdot (X_s^{i,k}-\overline{X}_s^{i,k}) \,ds\notag
\\&\quad-\frac{2}{N_n}\int_0^t \sum_{\ell\neq k}\Big(\sum_{j=1}^{N_n^\ell}\nabla F_{k
\ell}(X_s^{i,k}-X_s^{j,\ell})-a_\ell(\nabla F_{k\ell}\ast\mu^\ell_s)(\overline{X}_s^{i})\Big)\cdot (X_s^{i}-\overline{X}_s^{i,k}) \,ds
\\&:=L_1+L_2+L_3.
\end{align*}
Using assumptions on $V_k$ and the estimates obtained in Step 1, the three terms $L_1, L_2$ and $L_3$ can be bounded from above by
\begin{align*}
L_1\leq 2\theta_k\int_0^t \Big|X_s^{i,k}-\overline{X}_s^{i,k}\Big|^2\, ds,\quad \mathbb{E}[L_2+L_3]\leq \frac{C}{N_n^k}.
\end{align*}
The claimed statement of the theorem then follows by applying Grönwall's lemma.
}
\end{proof}
\RV{It would be an interesting problem for future work} to obtain quantitative propagation of chaos estimates, along the lines of what is done in~\cite{cattiaux2008,lacker2023sharp}. This will be presented elsewhere.

\subsection{Characterization of stationary states}
\label{subsec:stat_states}

An equilibrium state (stationary state) of the multi-species McKean-Vlasov PDE system \eqref{eq: multi-species PDE} is a collection of probability measures $(\mu^k(\ddx)=\mu^k(x)\,\ddx)_{k=1}^M$ that solve the stationary McKean-Vlasov PDE system:
\begin{equation}
\label{eq: equilibrium}
\div\left[\Big(\nabla V_k+\sum_{\ell=1}^M a_\ell\nabla F_{k\ell}\ast\mu^\ell\Big)\mu^k \right]+\frac{\sigma_k^2}{2}\Delta\mu^k=0,\quad k=1,\ldots M,
\end{equation}
where the coefficients $\{a_i\}_{i=1}^M$ are defined in \eqref{eq: coefficients a}. The following proposition provides a characterization of stationary states. 
\begin{proposition}[Characterization of stationary states]
\label{jugnot}
Under Assumptions \ref{asp: assumption} and \ref{asp: assumptionbis}, a collection of probability measures $(\mu^k(\ddx)=\mu^k(x)\,\ddx)_{k=1}^M \in (\mathcal{P}
(\mathbb{R}^d)\times H^1(\mathbb{R}^d))^M$ is an equilibrium state if and only if
\begin{equation}
\label{eq: muk}
\mu^k(x)=\frac{1}{Z_k}\exp\Big[-\frac{2}{\sigma_k^2}\Big(V_k(x)+\sum_{\ell=1}^M a_\ell F_{k\ell}\ast\mu^\ell(x)\Big)\Big],    
\end{equation}
where the constants $\{ a_\ell \}_{\ell=1}^M$ are defined in~\eqref{eq: coefficients a} and where $Z_k$ is the normalization constant given by
\begin{equation}
\label{eq: Zk}
Z_k:=\int \exp\Big[-\frac{2}{\sigma_k^2}\Big(V_k(x)+\sum_{\ell=1}^M a_\ell F_{k\ell}\ast\mu^\ell(x)\Big)\Big]\,\ddx.
\end{equation}   
\end{proposition}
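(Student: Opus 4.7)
The plan is to handle each of the $M$ stationary equations in~\eqref{eq: equilibrium} separately and to reduce the characterization to a pointwise identity on the flux. For each $k$, I set $\Psi_k := V_k + \sum_{\ell=1}^M a_\ell F_{k\ell}\ast\mu^\ell$, so that~\eqref{eq: equilibrium} reads $\div J_k = 0$ with flux $J_k := \mu^k\,\nabla\Psi_k + \frac{\sigma_k^2}{2}\nabla\mu^k$. Wherever $\mu^k>0$ there is the algebraic identity
\[
J_k \;=\; \frac{\sigma_k^2}{2}\,\mu^k\,\nabla\Phi_k, \qquad \Phi_k \;:=\; \log\mu^k + \frac{2}{\sigma_k^2}\Psi_k.
\]
The sufficiency (``if'') direction is then immediate: the ansatz~\eqref{eq: muk} makes $\Phi_k \equiv -\log Z_k$ constant on $\mathbb{R}^d$, so $J_k$ vanishes identically and hence $\div J_k = 0$.

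For the necessity (``only if'') direction I would proceed in three steps. First, since $\Psi_k$ is of class $C^2$ with polynomial growth (using the quadratic form of the interaction in Assumption~\ref{asp: assumptionbis} together with (H1)--(H3)), standard elliptic regularity applied to~\eqref{eq: equilibrium}, viewed as a linear second-order equation with smooth coefficients in $\mu^k$, upgrades the $H^1$ hypothesis to $\mu^k\in C^\infty(\mathbb{R}^d)$. Second, the strong maximum principle (or the Harnack inequality) forces $\mu^k>0$ everywhere; otherwise $\mu^k\equiv 0$, contradicting $\mu^k\in\mathcal{P}(\mathbb{R}^d)$. Third, using the flux identity, I would test $\div J_k = 0$ against $\chi_R\,\Phi_k$ with $\chi_R$ a smooth cutoff supported in $B_{2R}$ and integrate by parts to get
\[
\int_{\mathbb{R}^d}\chi_R\,\mu^k\,|\nabla\Phi_k|^2\,\ddx \;=\; -\int_{\mathbb{R}^d}\mu^k\,\Phi_k\,(\nabla\chi_R\cdot\nabla\Phi_k)\,\ddx.
\]
Passing to the limit $R\to\infty$ should yield $\nabla\Phi_k=0$ $\mu^k$-almost everywhere, whence $\Phi_k$ is constant; exponentiating and pinning the constant via $\int\mu^k\,\ddx=1$ gives the Gibbs form~\eqref{eq: muk}--\eqref{eq: Zk}.

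The hard part will be justifying that the boundary integral on the right-hand side above vanishes as $R\to\infty$. This requires an a priori decay estimate on $\mu^k$: since $V_k$ is convex at infinity by~(H2) and $F_{k\ell}\ast\mu^\ell$ grows at most quadratically (by~(H4)), I would construct a Gaussian-type super-solution for~\eqref{eq: equilibrium} and compare with $\mu^k$, obtaining a pointwise bound of the form $\mu^k(x)\le C\,e^{-c|x|^2}$ for some $c>0$. This decay would ensure that $|\Phi_k|$ grows at most polynomially and that $\mu^k\,\Phi_k\,\nabla\Phi_k \in L^1(\mathbb{R}^d)$, so that choosing $\chi_R$ with $|\nabla\chi_R|\le C/R$ sends the boundary term to zero. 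This decay/bootstrap step is the technical core of the argument; the remainder reduces to standard elliptic theory and the algebraic identity above.
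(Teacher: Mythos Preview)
Your approach is correct but organised differently from the paper's. Both routes reach the same endpoint---showing that $\mu^k$ is a constant multiple of the Gibbs density $v_k := Z_k^{-1}\exp\bigl(-\tfrac{2}{\sigma_k^2}\Psi_k\bigr)$---but the paper gets there via the ground-state transformation of Tamura and Dressler rather than a direct flux/cutoff argument. Specifically, the paper freezes the $\mu^\ell$ in the convolution terms to obtain a \emph{linear} operator $\mathcal{L}_k[\rho_1,\dots,\rho_M]$, verifies that $v_k$ solves $\mathcal{L}_k v_k = 0$, and then, for any other $H^1$ probability solution $w_k$, passes to the conjugated (symmetrised) operator $\mathcal{Q}_k f := -v_k^{-1/2}\mathcal{L}_k(f\,v_k^{1/2})$ applied to $f_k := w_k v_k^{-1/2}$. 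A short computation yields
\[
0 \;=\; \langle \mathcal{Q}_k f_k,\, f_k\rangle_{L^2} \;=\; \frac{\sigma_k^2}{2}\int_{\mathbb{R}^d} v_k\,\bigl|\nabla(w_k/v_k)\bigr|^2\,\ddx,
\]
forcing $w_k/v_k$ to be constant. This is exactly your conclusion $\nabla\Phi_k = 0$ in disguise, since $\Phi_k = \log(\mu^k/v_k)$ up to an additive constant. What the symmetrisation buys is a clean Dirichlet-form identity in the weighted space $L^2(v_k\,\ddx)$, allowing the paper to defer the regularity and integrability issues to Tamura's original treatment rather than build them from scratch. Your route is more self-contained and keeps the computation at the level of the physical flux $J_k$, at the cost of the Gaussian super-solution decay estimate you (rightly) flag as the technical core; under Assumptions~\ref{asp: assumption} and~\ref{asp: assumptionbis} that estimate is obtainable, so the argument goes through.
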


\begin{proof}
We follow the proof for the single-species system from~\cite{Tamura1984,dressler1987stationary}. We consider the following linearized stationary equation
\begin{equation}
 \label{eq: linearised stationary eqn}
\L_i[\rho_1,\ldots, \rho_M](\mu)=0, 
\end{equation}
where $\rho_1,\ldots \rho_M \in (L^1(\bRb^d))^M$ are given and 
\begin{align*}
\L_i[\rho_1,\ldots, \rho_M](\mu)&=\div\bigg[\Big(\nabla V_i+\sum_{j=1}^M a_j\nabla F_{ij}\ast\rho_j\Big)\mu \bigg]+\frac{\sigma_i^2}{2}\Delta\mu\\
&=\div\bigg[\Big(\nabla V_i+\sum_{j=1}^M a_j\nabla F_{ij}\ast\rho_j +\frac{\sigma_i^2}{2}\frac{\nabla \mu}{\mu}\Big)\mu\bigg],\quad i=1,\ldots M.
\end{align*}
Note that $\L_i$ is a linear operator with respect to $\mu$. 
We define
\[
v_i(x):=\frac{1}{Z_i}\exp\Big[-\frac{2}{\sigma_i^2}\Big(V_i(x)+\sum_{j=1}^M a_j F_{ij}\ast\rho^j(x)\Big)\Big],\quad i=1,\ldots, M,  
\]
where the $Z_i$'s are the normalization constants so that $\|v_i\|_{L^1}=1$. We will show that $v_i$ is the unique solution of \eqref{eq: linearised stationary eqn} in $\mathcal{P}(\mathbb{R}^d)\times H^1(\mathbb{R}^d)$.
In fact, due to the growth conditions of $V_i$ in Assumption~\ref{asp: assumption}, direct computations show that $v_i \in \mathcal{P}(\mathbb{R}^d)\times H^1((\mathbb{R}^d))$ (in fact $v_i$ is smooth, see \cite{Tamura1984}). In addition, we have
\[
\nabla v_i=-\frac{2}{\sigma_i^2}\Big(\nabla V_i+\sum_{j=1}^M a_j \nabla F_{ij}\ast \rho^j\Big) v_i.
\]
Thus
\[
\frac{\sigma_i^2}{2}\frac{\nabla v_i}{v_i}+\Big(\nabla V_i+\sum_{j=1}^M a_j \nabla F_{ij}\ast \rho^j\Big)=0.
\]
It follows that $v_i$ solves the linearized stationary equation \eqref{eq: linearised stationary eqn}. Now suppose this equation has another solution $w_i \in A_i$, that is $\L_i(w_i)=0$. Let $f_i:=w_i v_i^{-1/2}$. Define
\[
\Q_if:=-v_i^{-1/2}\L_i(f v_i^{1/2}).
\]
Thus $\Q_i f_i=-v_i^{-1/2}\L_i(f_i v_i^{1/2})=-v_i^{-1/2}\L_i(w_i)=0$. We have
\begin{align*}
\div\Big( v_i^{-1/2}f_i\nabla v_i\Big)&=-\frac{2}{\sigma_i^2}\div \bigg[ v_i^{-1/2}f_i \Big(\nabla V_i+\sum_{j=1}^M a_j \nabla F_{ij}\ast \rho^j\Big) v_i\bigg]
\\&=-\frac{2}{\sigma_i^2}\div \bigg[ v_i^{1/2}f_i \Big(\nabla V_i+\sum_{j=1}^M a_j \nabla F_{ij}\ast \rho^j\Big)\bigg],
\end{align*}
that is
\[
\frac{\sigma_i^2}{2}\div\Big( v_i^{-1/2}f_i\nabla v_i\Big)+\div \bigg[ v_i^{1/2}f_i \Big(\nabla V_i+\sum_{j=1}^M a_j \nabla F_{ij}\ast \rho^j\Big)\bigg]=0.
\]
Therefore,
\begin{align*}
    \L_i (w_i)&=\div\bigg[\Big(\nabla V_i+\sum_{j=1}^M a_j\nabla F_{ij}\ast\rho_j\Big)w_i+\frac{\sigma_i^2}{2}\nabla w_i \bigg]
    \\&=\div\bigg[\Big(\nabla V_i+\sum_{j=1}^M a_j\nabla F_{ij}\ast\rho_j\Big)f_i v_i^{1/2}+\frac{\sigma_i^2}{2}\nabla ( v_i v_i^{-1/2} f_i ) \bigg]
    \\&=\div\bigg[\Big(\nabla V_i+\sum_{j=1}^M a_j\nabla F_{ij}\ast\rho_j\Big)f_i v_i^{1/2}\bigg]+\frac{\sigma_i^2}{2}\bigg\{\div\Big[v_i\nabla (v_i^{-1/2} f_i)\Big]+\div\Big[(v_i^{-1/2} f_i)\nabla v_i  \Big]\bigg\}
    \\&=\frac{\sigma_i^2}{2}\div\Big[v_i\nabla (v_i^{-1/2} f_i) \Big].
\end{align*}
Hence,
\[
\Q_i f_i=-v_i^{-1/2}\L_i(w_i)=-\frac{\sigma_i^2}{2}v_i^{-1/2}\div\Big[v_i\nabla (v_i^{-1/2} f_i)\Big].
\]
Thus we obtain
\begin{align*}
 0&=\langle \Q_i f_i, f_i\rangle_{L^2}=-\frac{\sigma_i^2}{2}\int_{\bRb^d}v_i^{-1/2}\div\Big[v_i\nabla (v_i^{-1/2} f_i)\Big]\, f_i
 \\&=\frac{\sigma_i^2}{2}\int_{\bRb^d}v_i\Big|\nabla (v_i^{-1/2} f_i)\Big|^2.
\end{align*}
It follows that $\nabla (v_i^{-1/2} f_i)=0$. Hence $ v_i^{-1/2}f_i=C=\mathrm{const}$, so $w_i=v_i^{1/2} f_i=C\, v_i$.  Since $\|w_i\|=\|v_i\|=1$, it implies that $C=1$. That is, $ w_i=v_i$. 

Now suppose that $(\mu^k(\ddx)=\mu^k(x)\ddx)_{k=1}^M$ is a stationary solution of \eqref{eq: multi-species PDE}. Then we have
\[
\mathcal{L}_k[\mu_1,\ldots, \mu_M](\mu_i)=0,\quad{for}\quad k=1,\ldots, M.
\]
According the above statement, $\mu_i$ must satisfy \eqref{eq: muk}. This completes the proof.
\end{proof}

Proposition~\ref{jugnot} provides us with the essential structure of the invariant measures for the multi-species McKean-Vlasov PDE. This Gibbs-like structure, Eqn.~\eqref{eq: muk}, will play a crucial role in the following sections: to prove the existence of an equilibrium state, it is necessary and sufficient to solve the system of equations~\eqref{eq: muk}-\eqref{eq: Zk}.

\subsection{The free-energy functional}
\label{subsec:free_energy}

In this subsection, we {\color{black}first assume only the symmetry Assumption~\ref{asp: assumptionter}, i.e. Newton's third law, is satisfied}. In particular, we have $F_{k\ell}=F_{\ell k}$ for any $1\leq k,\ell\leq M$. This assumption implies that the McKean-Vlasov PDE system \eqref{eq: multi-species PDE} is of gradient flow form. Therefore, we can define the free energy functional of this gradient flow. This functional has been used in \cite{BCCP,CMV} to prove convergence towards a steady state for the single-species model when the confining potential is convex. This was later extended to the non-convex case in \cite{AOP,SPA}. We emphasize the fact that the formal calculations that follow are valid for arbitrary interaction potentials, and that they are not restricted to the quadratic case. In particular, in {\color{black} the first part of this section, we are not imposing Assumption~\ref{asp: assumptionbis}. However, to obtain the lower-bound of the free-energy functionnal, we will need to restrict ourselves to the aforementionned assumption that means the interaction potentials are quadratic}.


For the single-species McKean-Vlasov PDE, the free-energy functional is the following:
\begin{equation}\label{e:free_energy_single_species}
\Upsilon_\sigma(\mu):=\frac{\sigma^2}{2}\int_{\bRb^d}\mu(x)\log(\mu(x))\ddx+\int_{\bRb^d}V(x)\mu(x)\ddx+\frac{1}{2}\iint_{\bRb^d\times\bRb^d}F(x-y)\mu(x)\mu(y)\ddx\ddy\,,
\end{equation}
where $V$ is the confining potential, $F$ is the interacting intra-species potential and $\sigma$ is the diffusion coefficient of the McKean SDE. As is well known, the free energy functional~\eqref{e:free_energy_single_species} provides us with a Lyapunov function for the McKean-Vlasov evolution PDE. We point out that the latter definition requires that $\mu$ is absolutely continuous with respect to the Lebesgue measure (and its density is denoted as $\mu$ also for simplicity). If this hypothesis is not respected, then $\Upsilon_\sigma(\mu):=+\infty$.

We will mimic the strategy in \cite{BCCP,AOP} to {\color{black}describe the limiting set and possibly to obtain the} convergence of solutions to the multi-species mean field PDE system to a stationary state. For this, we need to show that the free energy is nonincreasing and that it is lower bounded. Moreover, we will show that the time derivative of the free energy (i.e. the entropy dissipation functional) evaluated at the solution of the mean-field PDE system $\mu_t$ vanishes if and only if $\mu_t = \mu$ is a stationary state.


For any $\mu=(\mu^1,\ldots,\mu^M)$ where $\mu^\ell$ is a measure in $\bRb^d$ admitting a density with respect to the Lebesgue measure (which we continue to denote as $\mu^\ell$) and for any $\sigma=(\sigma_1,\ldots,\sigma_M)\in\left(\bRb_+^*\right)^M$, we introduce the following quantity that will play the role of a free energy functional for the McKean-Vlasov PDE system \eqref{eq: multi-species PDE}.

\begin{align}
\label{eq: free energy}
\Upsilon_\sigma(\mu)=\Upsilon_\sigma(\mu^1,\ldots,\mu^M):=&\sum_{k=1}^M a_k\left\{\frac{\sigma_k^2}{2}\int_{\bRb^d}\mu^k(x)\log(\mu^k(x))\ddx+\int_{\bRb^d}V_k(x)\mu^k(x)\ddx\right\}\\
&+\frac{1}{2}\sum_{k=1}^M\sum_{\ell=1}^M a_ka_\ell\iint_{\bRb^d\times\bRb^d}F_{k\ell}(x-y)\mu^k(x)\mu^\ell(y)\ddx\ddy\,. \nonumber  
\end{align}
It is worth noting that the free-energy functional can be written in the form
\[
\Upsilon_\sigma(\mu)=\sum_{k=1}^Ma_k\F_k(\mu^k)\,,
\]
where, compare with~\eqref{e:free_energy_single_species},
\[
\F_k(\mu^k):=\frac{\sigma_k^2}{2}\int_{\bRb^d}\mu^k(x)\log(\mu^k(x))\ddx+\int_{\bRb^d}V_k(x)\mu^k(x)\ddx+\frac{1}{2}\int_{\bRb^d}W_k^\mu(x)\mu^k(x)\ddx\,.
\]
Here, the potential $W_k^\mu$ contains all the interactions from the species:
\[
W_k^\mu(x):=\sum_{\ell=1}^Ma_\ell F_{k\ell}\ast\mu^\ell(x)\,.
\]
Thus, the total free-energy of the multi-species mean field system can be expressed as the weighted, with respect to the weights $a_1,\ldots,a_M$, average of the free-energy for each species.

We calculate the functional derivative of the free energy:
\[
\frac{\delta\Upsilon_\sigma}{\delta \mu^k}=a_k\Big[\frac{\sigma_k^2}{2}\left(\log\left(\mu^k\right)+1\right)+V_k+\sum_{\ell=1}^M a_\ell F_{k\ell}\ast \mu^\ell\Big]\,,
\]
which leads us to
\begin{equation*}
\nabla\frac{\delta\Upsilon_\sigma}{\delta \mu^k}= a_k\Bigg[\frac{\sigma_k^2}{2}\frac{\nabla\mu^k}{\mu^k}+\nabla V_k+\sum_{\ell=1}^M a_\ell\nabla F_{k\ell}\ast \mu^\ell\Bigg]\,.
\end{equation*}
In particular, we (formally) deduce that the coupled McKean-Vlasov PDE system~\eqref{eq: multi-species PDE} can be written as a Wasserstein gradient flow 
\[
\partial_t\mu^k=\div\Big[a_k^{-1}\mu^k\nabla\frac{\delta\Upsilon_\sigma}{\delta \mu^k}\Big]\,,
\]
for any $k=1,\ldots,M$.

We now compute the time derivative of the free-energy functional \eqref{eq: free energy} along the McKean-Vlasov PDE~\eqref{eq: multi-species PDE} flow:
\begin{align*}
\frac{\dd}{\ddt}\Upsilon_\sigma(\mu_t^1,\ldots, \mu_t^M)&=\sum_{k=1}^M \int_{\bRb^d}\partial_t\mu_t^k\frac{\delta\Upsilon_\sigma}{\delta \mu_t^k}\,\ddx\\
&=\sum_{k=1}^M\int_{\bRb^d}\left\{\div\Big[\Big(\nabla V_k+\sum_{\ell=1}^M a_\ell\nabla F_{k\ell}\ast\mu_t^\ell\Big)\mu_t^k\Big]+\frac{\sigma_k^2}{2}\Delta\mu_t^k\right\}\frac{\delta\Upsilon_\sigma}{\delta \mu_t^k}\,\ddx\,.
\end{align*}
We perform an integration by parts to obtain:
\begin{align*}
&\frac{\dd}{\ddt}\Upsilon_\sigma(\mu_t^1,\ldots, \mu_t^M)\\
&=-\sum_{k=1}^M\int_{\bRb^d}\Bigg[\Big(\nabla V_k+\sum_{\ell=1}^M a_\ell\nabla F_{k\ell}\ast\mu_t^\ell\Big)\mu_t^k+\frac{\sigma_k^2}{2}\nabla \mu_t^k\Bigg]\cdot\nabla\frac{\delta\Upsilon_\sigma}{\delta \mu_t^k}\,\ddx\\
&=-\sum_{k=1}^M\int_{\bRb^d}a_k\Bigg[\Big(\nabla V_k+\sum_{\ell=1}^M a_\ell\nabla F_{k\ell}\ast\mu_t^\ell\Big)\mu_t^k+\frac{\sigma_k^2}{2}\nabla \mu_t^k\Bigg]\cdot\Bigg[\frac{\sigma_k^2}{2}\frac{\nabla\mu_t^k}{\mu_t^k}+\nabla V_k+\sum_{\ell=1}^M a_\ell\nabla F_{k\ell}\ast \mu_t^\ell\Bigg]\,\ddx\\
&=-\sum_{k=1}^M\int_{\bRb^d}a_k\Big|\frac{\sigma_k^2}{2}\frac{\nabla\mu_t^k}{\mu_t^k}+\nabla V_k+\sum_{\ell=1}^M a_\ell \nabla F_{k\ell}\ast \mu_t^\ell\Big|^2\mu_t^k\,\ddx.
 \end{align*}
Thus, $\Upsilon_\sigma$ is always non-increasing, and 
\begin{equation*}
\frac{\dd}{\ddt}\Upsilon_\sigma(\mu^1,\ldots,\mu^M)=0
\end{equation*}
if and only if for any $1\leq k\leq M$, we have:
\[
\frac{\sigma_k^2}{2}\frac{\nabla\mu^k}{\mu^k}+\nabla V_k+\sum_{\ell=1}^M a_\ell \nabla F_{k\ell}\ast\mu^\ell=0.
\]
This immediately implies that $(\mu^1,\ldots,\mu^M)$ satisfies
\[
\nabla\frac{\delta\Upsilon_\sigma}{\delta\mu^k}=0, \quad 1\leq k\leq M.
\]
We conclude that the time-derivative of the free-energy is equal to $0$ if and only if $(\mu^1,\ldots,\mu^M)$ is a stationary state.

We now show that the free-energy functional is lower-bounded. We restrict ourselves to the symmetric quadratic case; in particular, Assumptions~\ref{asp: assumptionbis} and~\ref{asp: assumptionter} are satisfied. The idea is similar to the one in \cite{BCCP,AOP}. First, we introduce the following quantities:
\[
\beta:={\color{black}\min}\left\{\alpha_{k\ell}\,\,:\,\,1\leq k,\ell\leq M\right\}\,,
\]
and
{\color{black}
\[
V_k^0(x):=V_k(x)+\frac{\min\{\beta;0\}}{2}|x|^2\,.
\]
On the one hand, if $\beta\geq0$, the quantity $\int_{\bRb^d}F_{k\ell}\ast\mu^\ell(x)\mu^k(x)\ddx$ is nonnegative due to the positivity of $\alpha_{k\ell}$. As a consequence,
\begin{align*}
&\sum_{k=1}^Ma_k\int_{\bRb^d}\left(V_k(x)+\frac{1}{2}\sum_{\ell=1}^Ma_\ell F_{k\ell}\ast\mu^\ell(x)\right)\mu^k(x)\ddx&\\
&\geq\sum_{k=1}^Ma_k\int_{\bRb^d}V_k(x)\mu^k(x)\ddx\\
&\geq\sum_{k=1}^Ma_k\int_{\bRb^d}V_k^0(x)\mu^k(x)\ddx\,.
\end{align*}
On the other hand, if $\beta<0$, we put $\rho_{k\ell}:=\alpha_{k\ell}-\beta\geq0$ and $G_{k\ell}(x):=\frac{\rho_{k\ell}}{2}|x|$. Thus, the quantity $\int_{\bRb^d}G_{k\ell}\ast\mu^\ell(x)\mu^k(x)\ddx$ is nonnegative. As a consequence,
\[
\sum_{k=1}^Ma_k\int_{\bRb^d}\frac{1}{2}\sum_{\ell=1}^Ma_\ell F_{k\ell}\ast\mu^\ell(x)\mu^k(x)\ddx\geq\sum_{k=1}^Ma_k\int_{\bRb^d}\frac{\beta}{4}\sum_{\ell=1}^Ma_\ell\int_{\bRb^d}|x-y|^2\mu^\ell(y)\ddy\mu^k(x)\ddx\,.
\]
By expanding then rearranging the terms, we obtain by using the positivity of $-\beta$:
\begin{align*}
&\sum_{k=1}^Ma_k\int_{\bRb^d}\left(V_k(x)+\frac{\beta}{4}\sum_{\ell=1}^Ma_\ell\int_{\bRb^d}|x-y|^2\mu^\ell(y)\ddy\right)\mu^k(x)\ddx\\
&=\sum_{k=1}^Ma_k\int_{\bRb^d}V_k(x)\ddx+\frac{\beta}{2}\sum_{k=1}^Ma_k\int_{\bRb^d}|x|^2\mu^k(x)\ddx-\frac{\beta}{2}\left|\sum_{k=1}^Ma_k\int_{\bRb^d}x\mu^k(x)\ddx\right|^2\\
&\geq\sum_{k=1}^Ma_k\int_{\bRb^d}\left(V_k(x)+\frac{\beta}{2}|x|^2\right)\mu^k(x)\ddx\\
&\geq\sum_{k=1}^Ma_k\int_{\bRb^d}V_k^0(x)\mu^k(x)\ddx\,.
\end{align*}
In any case, the potential part of the free-energy functional may be lower-bounded in the following way:
\[
\sum_{k=1}^Ma_k\int_{\bRb^d}\left(V_k(x)+\frac{1}{2}\sum_{\ell=1}^Ma_\ell F_{k\ell}\ast\mu^\ell(x)\right)\mu^k(x)\ddx\geq\sum_{k=1}^Ma_k\int_{\bRb^d}V_k^0(x)\mu^k(x)\ddx\,.
\]
}
It remains to deal with the entropy part. We have
\begin{align*}
&\int_{\bRb^d}\mu_t^k(x)\log\left(\mu_t^k(x)\right)\ddx=\int_{\bRb^d}\mu_t^k(x)\log\left(\mu_t^k(x)\right)\mathds{1}_{\left\{\mu_t^k(x)\geq1\right\}}\ddx\\
&+\int_{\bRb^d}\mu_t^k(x)\log\left(\mu_t^k(x)\right)\mathds{1}_{\left\{\eee^{-|x|}\leq\mu_t^k(x)\leq1\right\}}\ddx+\int_{\bRb^d}\mu_t^k(x)\log\left(\mu_t^k(x)\right)\mathds{1}_{\left\{\mu_t^k(x)\leq\eee^{-|x|}\right\}}\ddx\\
&\geq-\int_{\bRb^d}|x|\mu_t^k(x)\mathds{1}_{\left\{\eee^{-|x|}\leq\mu_t^k(x)\leq1\right\}}\ddx+\int_{\bRb^d}\sqrt{\mu_t^k(x)}\left[\sqrt{\mu_t^k(x)}\log\left(\mu_t^k(x)\right)\right]\mathds{1}_{\left\{\mu_t^k(x)\leq \eee^{-|x|}\right\}}\ddx\\
&\geq-\frac{1}{2}\int_{\bRb^d}(1+|x|^2)\mu_t^k(x)\ddx-2{\rm e}^{-1}\int_{\bRb^d}\sqrt{\mu_t^k(x)}\mathds{1}_{\left\{\mu_t^k(x)\leq\eee^{-|x|}\right\}}\ddx\,.
\end{align*}
Note that to obtain the last inequality, we have used the fact that, for any $u\in(0;1)$, $\sqrt{u}\log(u)>-2\eee^{-1}$. This immediately implies that
\begin{align}
\mathcal{F}_k(\mu^k)&=\int_{\bRb^d}\left(V_k(x)+\frac{1}{2}\sum_{\ell=1}^Ma_\ell F_{k\ell}\ast\mu^\ell(x)\right)\mu^k(x)\ddx+\frac{\sigma_k^2}{2}\int_{\bRb^d}\mu_t^k(x)\log\left(\mu_t^k(x)\right)\ddx \notag\\
&\geq-{\color{black}\sigma_k^2}\eee^{-1}\int_{\bRb^d}\eee^{-\frac{|x|}{2}}\ddx-\frac{\sigma_k^2}{{\color{black}4}}+\int_{\bRb^d}\left(V_k(x)+\frac{{\color{black}2}\beta-\sigma_k^2}{{\color{black}4}}|x|^2\right)\mu^k(x)\ddx\,.
\label{lowerbound Fk}
\end{align}
We then introduce
\[
\lambda_-:=\inf\left\{V_k(x)+\frac{\beta-\sigma_k^2}{2}|x|^2\,\,:\,\,x\in\bRb^d, 1\leq k\leq M\right\}\,,
\]
which, by multiplying \eqref{lowerbound Fk} by $a_k$ and then adding over $k$ from $1$ to $M$, noting that $\sum_{k=1}^M a_k=1$, leads us to the estimate
\[
\Upsilon_\sigma(\mu^1,\cdots,\mu^M)\geq C+\lambda_-\,.
\]
{\color{black}We stress that $\lambda_-$ is not equal to $-\infty$ due to Assumption~\ref{asp: assumption} and more precisely from (H2) part which implies that for any $\theta>0$ $x\mapsto V_k(x)-\theta |x|^2$ is convex at infinity.}

This completes the proof that the free-energy functional is lower-bounded.

%
%
\section{Invariant probability measures}
\label{sec:inv_meas}

In this section, we study the invariant measure(s) of the mean-field coupled SDEs~\eqref{eq: mean-field SDE}. {\color{black}We recall the following:}

{\color{black}
\[
(F_{k\ell}\ast\mu^\ell)(x)=\alpha_{k\ell}\Big[\frac{1}{2}|x|^2 -x^T\int_{\bRb^d} y \mu^\ell(y)\,\ddy+\frac{1}{2}\int_{\bRb^d} |y|^2 \mu^\ell(y)\,\ddy\Big].    
\]}

We define the mean vectors/first moments of $\mu^\ell$ by
\begin{equation}
 \label{eq: mean value}
 m_\ell := \int_{\bRb^d} x\mu^\ell(x)\,\ddx, \quad \ell=1,\ldots, M.
\end{equation}

Now, we will use the method developed in \cite{Dawson83,HT1,PT}, see also~\cite{PZ2024, PhysRevResearch.5.013078}, to transform the infinite-dimensional fixed point problem \eqref{eq: muk}-\eqref{eq: Zk} into a finite-dimensional one. That is, we will show that the set of stationary states $(\mu^1,\cdots,\mu^k)$ is in bijection with the set of first moments (``magnetizations") $(m_1,\cdots,m_M)\in(\bRb^d)^M$. To this end, substituting the above calculations into \eqref{eq: muk} and \eqref{eq: Zk}, noting that the term $\int_{\bRb^d} |y|^2 \mu^\ell(y)\,dy$ is canceled out, we obtain
\begin{align*}
  \mu^k(x)=\frac{\exp\Big[-\frac{2}{\sigma_k^2}\Big(V_k(x)+\sum_{\ell=1}^M a_\ell \alpha_{k\ell}(\frac{1}{2}|x|^2-x^T m_\ell\Big)\Big]}{\int \exp\Big[-\frac{2}{\sigma_k^2}\Big(V_k(y)+\sum_{\ell=1}^M a_\ell \alpha_{k\ell}(\frac{1}{2}|y|^2-y^T m_\ell\Big)\Big]\,\ddy} . 
\end{align*}
It follows that $\{ m_k, k=1,\ldots,M \}$ satisfy the following system of self-consistency equations
\begin{equation}
 \label{eq: eqns mk}
 m_k=\frac{\int x\exp\Big[-\frac{2}{\sigma_k^2}\Big(V_k(x)+\frac{1}{2}\sum_{\ell=1}^M a_\ell \alpha_{k\ell}|x|^2-x^T\sum_{\ell=1}^M a_\ell \alpha_{k\ell} m_\ell\Big)\Big]\,\ddx}{\int \exp\Big[-\frac{2}{\sigma_k^2}\Big(V_k(x)+\frac{1}{2}\sum_{\ell=1}^M a_\ell \alpha_{k\ell}|x|^2-x^T\sum_{\ell=1}^M a_\ell \alpha_{k\ell} m_\ell\Big)\Big]\,\ddx}\,, \quad k=1,\ldots, M.
\end{equation}
In view of this system of equations, we deduce that, to study the number and structure of invariant measures, it is sufficient to study a fixed-point problem in a space of dimension $dM$. If the particles take values in $\bRb$, we have a fixed-point problem in $\bRb^M$. This is a generalization to the multi-species setting of the classical self-consistency equation for the single species problem that was studied extensively in, e.g.~\cite{Dawson83, shiino1987}.
%
%
\subsection{Existence of Stationary States in the small-noise limit}
\label{smallnoise}

In this section, we focus on the one-dimensional case. \RV{Extending the analysis to arbitrary dimensions is nontrivial and will be investigated in future work.}

Our analysis is based on the self-consistency equations \eqref{eq: eqns mk}, which in the one-dimensional setting are simplified to
\begin{equation}
\label{eq: eqns mk:dim1}
 m_k=\frac{\int x\exp\Big[-\frac{2}{\sigma_k^2}\Big(V_k(x)+\frac{1}{2}\sum_{\ell=1}^M a_\ell \alpha_{k\ell}x^2-x\sum_{\ell=1}^M a_\ell \alpha_{k\ell} m_\ell\Big)\Big]\,\ddx}{\int \exp\Big[-\frac{2}{\sigma_k^2}\Big(V_k(x)+\frac{1}{2}\sum_{\ell=1}^M a_\ell \alpha_{k\ell}x^2-x\sum_{\ell=1}^M a_\ell \alpha_{k\ell} m_\ell\Big)\Big]\,\ddx}\,.
\end{equation}
The key idea is to rely on the small-noise limit, as was done in \cite{HT2} for the single species model. We briefly review this case. Assume that the system admits a stationary measure $\mu^\sigma$, parameterized by the noise strength $\sigma>0$. By studying the family $\{\mu^\sigma\,\,:\,\,\sigma>0\}$, one can show that, under some reasonable hypotheses--the ones that we also make in this paper-- all the possible limits as $\sigma \rightarrow 0$ of this family of stationary measures are of the form $\mu^0:=\sum_{j=1}^q\lambda_j\delta_{\xi_j}$ where $q\geq1$, $\lambda_j>0$ for any $j=1,\ldots,q$ and $\lambda_1+\ldots+\lambda_q=1$, and $\xi_j\in\bRb$ for any $j\in\{1,\cdots,q\}$. 
Moreover, these limiting measures satisfy the following necessary (albeit not sufficient) condition:
\[
\left(V'+F'\ast\mu^0\right)\mu^0=0\,.
\]
Consequently, for any $1\leq j\leq q$, we obtain the following equation
\begin{equation}\label{e:algebraic_1d}
V'(\xi_j)+\sum_{\ell=1}^q\lambda_\ell F'(\xi_j-\xi_\ell)=0\,.
\end{equation}
In this section, we implement this approach to the multi-species problem. We first need to study solutions to some appropriate algebraic equations, which are multi-species counterparts of Equation~\eqref{e:algebraic_1d}; the study of these algebraic equations is postponed to Section~\ref{app:pogacar}. Then, we will show that for some of these solutions, which we will refer to as ``candidates'', provided that the appropriate assumptions are satisfied, we can associate an invariant probability measure for sufficiently small diffusion coefficients.

The general multidimensional problem leads to very complicated algebraic equations for the candidate solutions. This is why we need to restrict our study to the case $q=1$. 

We consider candidate solutions of the form $\mu^{0,j}=\delta_{m_j}$ where $m_j\in\bRb$ and where $\mu^{0,j}$ denotes the limit of a stationary measure related to the species $j$. We introduce the following effective potentials.

\begin{defn}
For any $m=(m_1,\ldots,m_M)$, and for any $1\leq k\leq M$, we set

\[
W_{(m),k}(x):=V_k(x)+\sum_{\ell=1}^Ma_\ell F_{k\ell}(x-m_\ell)\,.
\]

\end{defn}
We will now show that for any $m^0$ such that
\begin{equation}
\label{eq: m-systems}
W'_{(m^0),k}(m^0_k)=0\,,
\end{equation}
for any $1\leq k\leq M$, there is an invariant probability measure arbitrarily close to the measure $\delta_{m^0}$  for sufficiently small noise strengths, and under additional hypotheses--see Assumptions~\ref{yasmeen}, \ref{yasmeen2} below. As we already mentioned, we will employ a strategy similar to the one in \cite{HT1}, adapted to the multi-species case. First, we introduce the following quantities.

\begin{defn}
Consider the following norm in $\bRb^M$: \[ ||m||_\sigma:={\color{black}\max}_{1\leq k\leq M}\frac{|m_k|}{\sigma_k}\,, \]
for any $m=(m_1,\ldots,m_M)$.
\end{defn}

\begin{defn}
For any $\lambda>0$, for any $m=(m_1,\cdots,m_M)\in\bRb^M$ and for any $\sigma=(\sigma_1,\ldots,\sigma_M)\in\left(\bRb_+^*\right)^M$, we introduce the following parallelepiped:
\[
\mathcal{C}^\sigma(m,\lambda):=\prod_{k=1}^M\left[m_k-\lambda\sigma_k,m_k+\lambda\sigma_k\right]\,.
\]

\end{defn}
We note that $\mathcal{C}^\sigma(m,\lambda)$ is the ball of center $m$ and radius $\lambda$ for the norm $||\cdot||_\sigma$. We also note that the parallelepiped $\mathcal{C}^\sigma(m,\lambda)$ reduces to the point $\{m\}$ when 
$\max_{1\leq k\leq M}\sigma_k$ vanishes. Since this quantity is used several times, we introduce the notation
\begin{equation}
\label{sandra}
f(\sigma):={\color{black}\max}_{1\leq k\leq M}\sigma_k\,,
\end{equation}
for any $\sigma=(\sigma_1,\ldots,\sigma_M)\in\left(\bRb_+^*\right)^M$

We now prove that if $f(\sigma)$ is small enough, then the parallelepiped $\mathcal{C}^\sigma(m^0,\lambda)$ contains a solution to the self-consistency equations~\eqref{eq: eqns mk:dim1}, provided that $m^0$ verifies Equation~\eqref{eq: m-systems}. We will need to make additional assumptions.
\begin{assumption}
\label{yasmeen}
We assume that for any $1\leq k\leq M$, $m_k^0$ is the unique global minimizer of the effective potential $W_{(m^0),k}$.
\end{assumption}

We emphasize the fact that this assumption is necessary. To show this, we can proceed with \emph{reductio ad absurdum}. We assume that there is $k_0\in\{1,\ldots,M\}$ such that $m_{k_0}^0$ is not the unique global minimizer of $W_{(m^0),k_0}$. And we assume that despite this fact, there is a solution to \eqref{eq: eqns mk:dim1} in the parallelepiped for any $\sigma$ such that $f(\sigma)$ is small enough. Then, it is immediate that the limiting invariant probability measure for species $k_0$ is $\delta_{m_{k_0}^0}$. Substituting this into \eqref{eq: eqns mk:dim1} and applying the Laplace method (see \cite[Annex]{HT1}), it is easy to see that the limiting probability measure is not $\delta_{m_{k_0}^0}$. In fact, if $m_{k_0}^0$ is not the unique global minimizer, the Gibbs measure $Z^{-1}\exp\left\{-\frac{2}{\sigma_{k_0}^2}W_{(m^0),k_0}\right\}$ cannot converge to $\delta_{m_{k_0}^0}$ when $\sigma_{k_0}$ vanishes.

We will also need to make the following assumption.
\begin{assumption}
\label{yasmeen2}
We assume that for any $1\leq k\leq M$, $m_k^0$ is such that

\begin{equation}
\label{linegalite}
W_{(m^0),k}''(m_k^0)>\sum_{\ell=1}^Ma_\ell\left|\alpha_{k\ell}\right|>0\,.
\end{equation}
\end{assumption}
We find again that $m_k^0$ is a minimizer of $W_{(m^0),k}$ as necessarily the second derivative of the effective potential for the species $k$ is positive in $m_k^0$. However, this technical condition is probably not necessary and is due to our method of proof, rather than to the structure of the multi-species McKean-Vlasov dynamics. Indeed, Equation~\eqref{linegalite} is equivalent to
\begin{equation}
\label{linegalite:csq}
V_k''(m_k^0)>\sum_{\ell=1}^Ma_\ell\left(\alpha_{k\ell}\right)_-\,,
\end{equation}
where $y_-:=\max\{0;-y\}$ for any $y\in\bRb$.

We emphasize the fact that in this work we are not assuming the positivity of the coefficients $\alpha_{k\ell}, \, k, \ell=1, \dots M$, except when $k=\ell$. The positivity of the coefficients, together with Assumption~\ref{yasmeen2} immediately implies that $V_k$ is locally convex around $m^0_k$. In particular, the method we are using cannot deal with the case where $m_k^0$ is an unstable stationary point of $V_k$. We believe that Assumption~\eqref{yasmeen2} can be relaxed.
\begin{prop}
\label{thunder}
Assume that {\color{black}Assumptions}~\ref{asp: assumption} and~\ref{asp: assumptionbis} hold. Let $m^0:=(m_1^0,\cdots,m_M^0)\in\bRb^M$ be a solution of the self-consistency equations~\eqref{eq: m-systems} such that {\color{black}Assumptions}~\ref{yasmeen} and \ref{yasmeen2} are satisfied. Then, for any $\lambda>0$, there exists $\sigma_0(\lambda)>0$ such that for any $\sigma=(\sigma_1,\ldots,\sigma_M)\in\left(\bRb_+^*\right)^M$ satisfying the inequality $f(\sigma)<\sigma_0(\lambda)$, there exists $m=(m_1,\cdots,m_M)\in\mathcal{C}^\sigma(m^0,\lambda)$ satisfying System~\eqref{eq: eqns mk:dim1}.
\end{prop}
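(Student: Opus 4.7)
My plan is to recast the self-consistency equations \eqref{eq: eqns mk:dim1} as a fixed point problem and invoke Brouwer's theorem. Define $T:\mathcal{C}^\sigma(m^0,\lambda)\to\bRb^M$ with $k$-th component
\[
T_k(m):=\frac{\int x\exp\left[-\frac{2}{\sigma_k^2}W_{(m),k}(x)\right]\ddx}{\int\exp\left[-\frac{2}{\sigma_k^2}W_{(m),k}(x)\right]\ddx},
\]
so that $m$ solves \eqref{eq: eqns mk:dim1} if and only if $T(m)=m$ (the extra constant $\frac12\sum_\ell a_\ell\alpha_{k\ell}m_\ell^2$ in $W_{(m),k}$ cancels between numerator and denominator). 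Since $\mathcal{C}^\sigma(m^0,\lambda)$ is compact and convex, once $T$ is shown to be continuous (which is a straightforward application of dominated convergence using the growth hypothesis~(H3)) and to map the parallelepiped into itself for $f(\sigma)$ small enough, Brouwer's fixed point theorem will deliver the desired $m\in\mathcal{C}^\sigma(m^0,\lambda)$. The self-mapping property is the main content.

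Two elementary observations drive the analysis. First, because the interactions $F_{k\ell}$ are quadratic, $W_{(m),k}''(x)=V_k''(x)+\sum_{\ell=1}^M a_\ell\alpha_{k\ell}$ does \emph{not} depend on $m$; setting $\gamma_k:=W_{(m^0),k}''(m_k^0)$, Assumption~\ref{yasmeen2} provides the quantitative slack $\gamma_k>\sum_{\ell=1}^M a_\ell|\alpha_{k\ell}|$. Second, using that $m^0$ satisfies~\eqref{eq: m-systems} to eliminate $V_k'(m_k^0)$ yields the linear identity
\[
W_{(m),k}'(m_k^0)=-\sum_{\ell=1}^M a_\ell\alpha_{k\ell}(m_\ell-m_\ell^0),
\]
which vanishes at $m=m^0$ and is linear in $m-m^0$.

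The heart of the argument is a quantitative Laplace expansion for $T_k$. Assumption~\ref{yasmeen} together with the non-degeneracy $\gamma_k>0$ and the implicit function theorem imply that, for $m$ in a neighbourhood of $m^0$, the potential $W_{(m),k}$ admits a unique global minimizer $x^*_k(m)$, depending smoothly on $m$ with $\partial_{m_\ell}x^*_k(m^0)=a_\ell\alpha_{k\ell}/\gamma_k$. A Gaussian-localization argument, following the single-species strategy of~\cite{HT1,HT2} (split the integrals into a neighbourhood of $x^*_k(m)$, where a Taylor expansion and a Gaussian tail bound apply, and its complement, where the polynomial confinement from~(H2)--(H3) together with the global uniqueness in Assumption~\ref{yasmeen} forces exponentially small contributions), gives, uniformly in $m\in\mathcal{C}^\sigma(m^0,\lambda)$,
\[
T_k(m)=x^*_k(m)+O(\sigma_k^2)=m_k^0-\frac{W_{(m),k}'(m_k^0)}{\gamma_k}+O(\|m-m^0\|^2)+O(\sigma_k^2).
\]
Inserting the linear identity and invoking the strict inequality $\sum_\ell a_\ell|\alpha_{k\ell}|/\gamma_k<1$ from Assumption~\ref{yasmeen2} then produces $|T_k(m)-m_k^0|\le\lambda\sigma_k$ for every $k$, provided $f(\sigma)<\sigma_0(\lambda)$ with $\sigma_0(\lambda)$ chosen small enough that the two error terms are absorbed by the available slack. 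Brouwer's theorem concludes the proof.

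The main obstacle is producing the Laplace expansion with error estimates that are \emph{uniform in $m\in\mathcal{C}^\sigma(m^0,\lambda)$}, so that the leading-order contribution dictated by the linear identity dominates. This requires generalizing the single-species technical lemmas of~\cite{HT1,HT2}: one must verify that $x^*_k(m)$ stays in a compact neighbourhood of $m_k^0$ for all $m$ in the parallelepiped (so that $W_{(m),k}'''$ is bounded there), and that the sub-Gaussian tail bounds on the Gibbs density are uniform in $m$. Adapting these single-species estimates to the coupled multi-species setting, where the effective potential for species $k$ depends on the first moments of all the other species, is the principal technical difficulty.
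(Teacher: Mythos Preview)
Your approach is essentially the same as the paper's. Both proofs recast \eqref{eq: eqns mk:dim1} as a fixed-point problem, apply a Laplace-type expansion to show that the self-map sends $\mathcal{C}^\sigma(m^0,\lambda)$ into itself (using Assumption~\ref{yasmeen2} to guarantee the leading-order coefficient $\sum_\ell a_\ell|\alpha_{k\ell}|/\gamma_k$ is strictly less than~$1$), and conclude via a fixed-point theorem. The only cosmetic difference is that the paper invokes \cite[Lemma~A.4 and Remark~A.5]{HT1} directly---with the perturbation parameter $\mu\eta_\epsilon$ chosen so the asymptotic comes out in one step---whereas you split the same computation into two pieces (Laplace gives the minimizer $x_k^*(m)$ up to $O(\sigma_k^2)$, then the implicit function theorem linearizes $x_k^*(m)$ around $m^0$); the paper also cites Schauder rather than Brouwer, which is immaterial in finite dimensions.
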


\begin{proof}

Let $m$ be an element of $\mathcal{C}^\sigma(m^0,\lambda)$. Then, there exist coordinates $(r_k)_{1\leq k\leq M}$ such that 

\[
m_k=m_k^0+\lambda r_k\sigma_k\,,
\]

with $|r_k|\leq1$. For any $1\leq k\leq M$ and for any $\sigma=(\sigma_1,\cdots,\sigma_M)$ (with $\sigma_l>0$ for any $1\leq l\leq M$), we introduce the function $\varphi_k^{(\sigma)}  : \bRb^M \mapsto \bRb$:

\[
\varphi_k^{(\sigma)}(m):=\frac{\int_\bRb x\exp\left[-\frac{2}{\sigma_k^2}\left(V_k(x)+\frac{1}{2}\sum_{\ell=1}^Ma_\ell\alpha_{k\ell}x^2-x\sum_{\ell=1}^Ma_l\alpha_{k\ell}m_\ell\right)\right]\ddx}{\int_\bRb\exp\left[-\frac{2}{\sigma_k^2}\left(V_k(x)+\frac{1}{2}\sum_{\ell=1}^Ma_\ell\alpha_{k\ell}x^2-x\sum_{\ell=1}^Ma_\ell\alpha_{k\ell}m_\ell\right)\right]\ddx}\,.
\]
We then define  $\Phi^{\sigma}  : \, \bRb^M  \mapsto \bRb^M$ as 
\begin{equation}
\label{littletwon}
\Phi^{\sigma}(m):=\left(\varphi_1^{(\sigma)}(m),\cdots,\varphi_M^{(\sigma)}(m)\right)\,.
\end{equation}
For any $1\leq k\leq M$, applying \cite[Lemma A.4.]{HT1} and \cite[Remark A.5]{HT1} to the functions $U(x):=V_k(x)+\frac{1}{2}\sum_{\ell=1}^Ma_\ell\alpha_{k\ell}x^2-x\sum_{\ell=1}^Ma_\ell\alpha_{k\ell}m_\ell^0$, $f(x):=x$  and $G_k(x):=x$ with the parameters $\mu:=-\lambda\sum_{\ell=1}^Ma_\ell\alpha_{k\ell}r_\ell$, $\epsilon:=\sigma_k^2$ and $\eta_\epsilon:=\sigma_k$ yields the following estimate, as $\sigma_k$ approaches $0$:

\[
\varphi_k^{(\sigma)}(m)=m_k^0-\frac{\sum_{\ell=1}^Ma_\ell\alpha_{k\ell}r_\ell\lambda}{V_k''(m_k^0)+\sum_{\ell=1}^Ma_\ell\alpha_{k\ell}m_\ell^0}\sigma_k+o(\sigma_k)\,.
\]

We point out that we can apply this result, based on Laplace's method, since the assumptions in \cite[Lemma A.4.]{HT1} are satisfied: in particular, $m_k^0$ is the unique global minimizer of $x\mapsto U(x)$ due to Assumption~\ref{yasmeen} and, moreover, the second derivative of $U$ in this point $m_k^0$ is positive according to the Inequality~\eqref{linegalite:csq}. Consequently, for any $\lambda>0$, we find

\[
\left|\varphi_k^{(\sigma)}(m)-m_k^0\right|=\lambda\sigma_k\left|\frac{\sum_{\ell=1}^Ma_\ell\alpha_{k\ell}r_\ell}{V_k''(m_k^0)+\sum_{\ell=1}^Ma_\ell\alpha_{k\ell}m_\ell^0}\right|+o(\sigma_k)\,.
\]

However, due to Assumption~\ref{yasmeen2}, we have

\[
\left|\frac{\sum_{\ell=1}^Ma_\ell\alpha_{k\ell}r_\ell}{V_k''(m_k^0)+\sum_{\ell=1}^Ma_\ell\alpha_{k\ell}m_\ell^0}\right|\leq\frac{\sum_{\ell=1}^Ma_\ell|\alpha_{k\ell}||r_\ell|}{W''_{(m^0),k}(m_k^0)}<1 \, .
\]

Consequently, if $\sigma_k$ is small enough, we have:

\[
\left|\varphi_k^{(\sigma)}(m)-m_k^0\right|<\lambda\sigma_k\,.
\]

The previous inequality is true for any $1\leq k\leq M$ provided that $f(\sigma)$ is small enough. That is, we have proved that for any $\lambda>0$, there exists $\sigma_0(\lambda)>0$ such that for any $\sigma\in\left(\bRb_+^*\right)^M$ satisfying $0<\sigma_k<\sigma_0(\lambda)$ for any $1\leq k\leq M$ the following inclusion holds:

\[
\Phi^{\sigma}\left(\mathcal{C}^\sigma(m^0,\lambda)\right)\subset\mathcal{C}^\sigma(m^0,\lambda)\,,
\]

We point out that $\mathcal{C}^\sigma(m,\lambda)$ is a compact and convex subset of $\bRb^M$. Then, we can apply Schauder's fixed point theorem, see, e.g. \cite[Proposition 4.1.]{HT1} and we obtain the existence of a fixed point to $\Phi^{\sigma}$ in the domain $\mathcal{C}^\sigma(m,\lambda)$; this completes the proof.

\end{proof}

We immediately deduce the following existence result.

\begin{cor}
Under the assumptions of Proposition~\ref{thunder}, and for sufficiently small noise strengths $\sigma_k, \, k=1, \dots M$, there exists an equilibrium point $(\mu^{1,\sigma},\cdots,\mu^{M,\sigma})$ of the mean-field multi-species system, such that the vector
\[
\left(\int_{\bRb}x\mu^{1,\sigma}(x)\ddx,\cdots,\int_{\bRb}x\mu^{M,\sigma}(x)\ddx\right)\in\mathcal{C}^\sigma(m^0,\lambda)\,;
\]
that is, for any $1\leq k\leq M$, we have:
\[
\int_{\bRb}x\mu^{k,\sigma}(x)\ddx\in\left[m_k^0-\lambda\sigma_k,m_k^0+\lambda\sigma_k\right]\,.
\]
In particular,
\[
\lim_{f(\sigma)\to0}\mu^{k,\sigma}=\delta_{m_k^0}\,.
\]

\end{cor}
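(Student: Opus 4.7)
The plan is to read the corollary as a direct construction from the fixed point provided by Proposition~\ref{thunder} combined with the characterization of invariant measures in Proposition~\ref{jugnot}, plus a Laplace-type argument for the Dirac limit. First, I would apply Proposition~\ref{thunder} to obtain, for any $\lambda>0$ and any $\sigma$ with $f(\sigma)<\sigma_0(\lambda)$, a vector $m=(m_1,\ldots,m_M)\in\mathcal{C}^\sigma(m^0,\lambda)$ satisfying the self-consistency equations~\eqref{eq: eqns mk:dim1}.

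From such $m$, I would define the candidate densities
\begin{equation*}
\mu^{k,\sigma}(x):=\frac{1}{Z_k}\exp\!\Big[-\tfrac{2}{\sigma_k^2}\Big(V_k(x)+\tfrac{1}{2}\sum_{\ell=1}^Ma_\ell\alpha_{k\ell}|x|^2-x\sum_{\ell=1}^Ma_\ell\alpha_{k\ell}m_\ell\Big)\Big],\quad k=1,\ldots,M,
\end{equation*}
with $Z_k$ the normalizing constants. By construction, the self-consistency equation $m_k=\varphi_k^{(\sigma)}(m)$ is precisely the statement that $\int x\,\mu^{k,\sigma}(x)\,\ddx=m_k$. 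Using the quadratic form of $F_{k\ell}$ in Assumption~\ref{asp: assumptionbis}, the exponent coincides, up to $x$-independent terms absorbed into $Z_k$, with $-\tfrac{2}{\sigma_k^2}(V_k(x)+\sum_{\ell}a_\ell F_{k\ell}\ast\mu^{\ell,\sigma}(x))$. By Proposition~\ref{jugnot}, this is exactly the characterization of stationary states, so $(\mu^{1,\sigma},\ldots,\mu^{M,\sigma})$ is an equilibrium point of the mean-field multi-species system. The inclusion of the first-moment vector in $\mathcal{C}^\sigma(m^0,\lambda)$ is then immediate, since each $m_k\in[m_k^0-\lambda\sigma_k,m_k^0+\lambda\sigma_k]$.

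It remains to show $\mu^{k,\sigma}\to\delta_{m_k^0}$ (weakly) as $f(\sigma)\to 0$. Since $|m_\ell-m_\ell^0|\leq\lambda\sigma_\ell\to 0$, the effective potentials $W_{(m),k}$ converge to $W_{(m^0),k}$ locally uniformly together with their first two derivatives. By Assumption~\ref{yasmeen}, $m_k^0$ is the unique global minimizer of $W_{(m^0),k}$, and by Assumption~\ref{yasmeen2} the second derivative at $m_k^0$ is strictly positive. A standard perturbation argument then guarantees that for $f(\sigma)$ small enough, $W_{(m),k}$ still has a unique global minimizer $x_k^\sigma$ near $m_k^0$ with $x_k^\sigma\to m_k^0$ and uniform lower bounds away from a neighbourhood of $m_k^0$. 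An application of Laplace's method to the Gibbs density $\mu^{k,\sigma}\propto\exp(-\tfrac{2}{\sigma_k^2}W_{(m),k})$ (exactly the asymptotic expansions used in the proof of Proposition~\ref{thunder}, via \cite[Lemma A.4.]{HT1}) yields weak convergence to $\delta_{m_k^0}$.

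The main obstacle I anticipate is the interplay between the $\sigma$-dependence of $m$ and the Laplace asymptotics: since the potential whose Gibbs measure we consider itself depends on $\sigma$ through $m$, the usual one-shot Laplace method does not directly apply. The cleanest remedy is to split the task into (i) showing that $W_{(m),k}\to W_{(m^0),k}$ in $C^2$ on compact sets uniformly in the relevant range of $\sigma$, and (ii) using Assumption~\ref{asp: assumption} (H2)--(H3) to obtain a $\sigma$-independent coercive lower bound that controls the tails. Both steps are standard once one notes that the perturbation between $W_{(m),k}$ and $W_{(m^0),k}$ is only linear in $x$ with slope $O(\lambda f(\sigma))$, hence negligible compared to the confining terms.
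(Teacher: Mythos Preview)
Your proposal is correct and is precisely the argument the paper has in mind: the paper does not give a separate proof but simply states ``We immediately deduce the following existence result,'' relying on the bijection between stationary states and solutions of the self-consistency equations~\eqref{eq: eqns mk:dim1} (established via Proposition~\ref{jugnot} at the start of Section~\ref{sec:inv_meas}) together with the Laplace asymptotics already invoked in the proof of Proposition~\ref{thunder}. Your careful handling of the $\sigma$-dependence of $m$ in the Laplace step is a reasonable elaboration of a detail the paper leaves implicit.
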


As we remarked in the previous section, the set of invariant probability measures is in bijection with a subset of $\bRb^M$ (in the one-dimensional case). Each element of this subset corresponds to the first moment of an invariant probability measure. More precisely, finding stationary states $(\mu^{1,\sigma},\cdots,\mu^{M,\sigma})$ is equivalent to finding solutions to the self-consistency equations~\eqref{eq: eqns mk:dim1}. On the other hand, in Proposition ~\ref{thunder} we proved that if the noise strengths are small enough, it is sufficient to consider the zero-noise case. In particular, solving equations~\eqref{eq: eqns mk:dim1} provides invariant probability measures in the small-noise case, provided that Assumptions~\ref{yasmeen} and \ref{yasmeen2} are satisfied.

%
\subsection{Existence and uniqueness of stationary states in the large-noise regime}
\label{bignoise}
In this section, we consider the large noise strength/high "temperature" regime, in which we can prove the uniqueness of invariant measures for the multi-species mean-field system. 
We first introduce two additional assumptions that will be used in the proof of this result.

In order to be able to employ the techniques developed in~\cite{PT}, the convexity of all of the interaction potentials is necessary.

We now present the main result of this section.

\begin{thm}
\label{lola}
Under Assumptions~\ref{asp: assumption},~\ref{asp: assumptionbis},~\ref{severine} and~\ref{severine2}, there exists a $\sigma_0>0$ such that for any $\sigma\in\left(\bRb_+^*\right)^M$, if $\inf_{1\leq k\leq M}\sigma_k\geq\sigma_0$, the unique solution to the self-consistency system~\eqref{eq: eqns mk:dim1} is $m_k=0$ for any $1\leq k\leq M$. Consequently, the multi-species McKean-Vlasov diffusion admits a unique stationary state with zero `magnetization' vector.
\end{thm}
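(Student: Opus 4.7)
The plan is to verify that $m=0$ solves the self-consistency system, and then to establish uniqueness via a contraction argument in the large-noise regime. First, I observe that under Assumption~\ref{severine} each $V_k$ is an even polynomial and the correction $\frac{1}{2}\beta_k x^2$, with $\beta_k:=\sum_\ell a_\ell\alpha_{k\ell}$, is also even; the tilt $-x\gamma_k(m)$, with $\gamma_k(m):=\sum_\ell a_\ell\alpha_{k\ell}m_\ell$, vanishes at $m=0$. Hence the numerator of $\varphi_k^{(\sigma)}(0)$ is the integral of an odd integrand, so $\Phi^\sigma(0)=0$ and $m=0$ satisfies~\eqref{eq: eqns mk:dim1}.

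Next I would show that the map $\Phi^\sigma$ defined in~\eqref{littletwon} is a strict contraction once $\inf_k\sigma_k$ is large enough. The crucial observation is that $\varphi_k^{(\sigma)}$ depends on $m$ only through the mean field $\gamma_k(m)$, so a direct computation gives
\[
\frac{\partial \varphi_k^{(\sigma)}}{\partial m_\ell}(m)=\frac{2 a_\ell \alpha_{k\ell}}{\sigma_k^2}\,\mathrm{Var}_{\mu_k^{m,\sigma}}(X),
\]
where $\mu_k^{m,\sigma}$ is the Gibbs measure with density proportional to $\exp(-\tfrac{2}{\sigma_k^2}U_k(\cdot,m))$ and $U_k(x,m):=V_k(x)+\tfrac{1}{2}\beta_k x^2-x\gamma_k(m)$. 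By Assumption~\ref{severine2} combined with $\alpha_{kk}>0$ and $a_k>0$ we have $\beta_k>0$, so $U_k(\cdot,m)$ is a polynomial of degree $2p_k\geq 4$ with positive leading coefficient $\theta_k^{(2p_k)}/(2p_k)$. A Laplace-type rescaling $x=\sigma_k^{1/p_k}y$ then yields the scaling estimate $\mathrm{Var}_{\mu_k^{m,\sigma}}(X)=O(\sigma_k^{2/p_k})$ as $\sigma_k\to\infty$, uniformly for $m$ in a bounded region, adapting the single-species high-temperature arguments of~\cite{PT}. Since $p_k\geq 2$, each Jacobian entry is $O(\sigma_k^{-2(p_k-1)/p_k})\to 0$.

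To pass from an infinitesimal to a global contraction, I would first establish an a priori bound on any solution of~\eqref{eq: eqns mk:dim1}: using moment bounds for $\mu_k^{m,\sigma}$ that are sublinear in $|\gamma_k(m)|$, the fixed-point search can be confined to a compact set $K(\sigma)\subset\bRb^M$. Restricting $\Phi^\sigma$ to $K(\sigma)$ and applying the mean value inequality in the norm $\|\cdot\|_\sigma$ of Section~\ref{smallnoise} together with the variance bound produces a Lipschitz constant strictly less than one as soon as $\inf_k\sigma_k\geq\sigma_0$, for some large enough $\sigma_0$. Banach's fixed point theorem then yields uniqueness in $K(\sigma)$, so the only solution of~\eqref{eq: eqns mk:dim1} is $m=0$, and by Proposition~\ref{jugnot} the associated multi-species McKean--Vlasov PDE admits a unique stationary state, which has vanishing magnetization vector.

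The main technical obstacle is precisely the uniform-in-$m$ variance estimate: the tilt $-x\gamma_k(m)$ shifts the Gibbs measure and could a priori generate a variance growing faster than $\sigma_k^{2/p_k}$, defeating the contraction. Controlling this requires the super-quadratic confinement $p_k\geq 2$ of Assumption~\ref{severine}, together with Assumption~\ref{severine2}, which ensures that $\frac{1}{2}\beta_k x^2$ is strictly convex and strengthens the confinement of $U_k(\cdot,m)$; it is reasonable to expect that the argument becomes considerably more delicate if some $\alpha_{k\ell}$ are allowed to be negative, which is why that generality is deferred to future work.
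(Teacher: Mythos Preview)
Your argument is essentially correct but follows a genuinely different route from the paper.

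\textbf{What the paper does.} The paper does not argue by contraction. It selects the index $k_0$ with $|m_{k_0}|$ maximal, uses Assumption~\ref{severine2} to get $|m_{k_0}|/|A_{k_0}|\ge 1/\overline{\alpha}_{k_0}$ (where $A_k=\sum_\ell a_\ell\alpha_{k\ell}m_\ell$, $\overline{\alpha}_k=\sum_\ell a_\ell\alpha_{k\ell}$), and thereby reduces the multi-species fixed-point problem to a single one-dimensional inequality
\[
\int_\bRb\Bigl(x-\tfrac{A}{\overline{\alpha}}\Bigr)\exp\Bigl\{-\tfrac{2}{\sigma^2}U(x)\Bigr\}\eee^{\frac{2A}{\sigma^2}x}\,\ddx\ge 0,
\]
with $U$ even. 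Expanding $\eee^{2Ax/\sigma^2}$ as a power series and invoking the moment-ratio monotonicity $\ell\mapsto I_{2\ell+2}/((2\ell+1)\sigma^2 I_{2\ell})$ from \cite{PT}, they show every term is negative once $\sigma$ is large (since $I_2/(\sigma^2 I_0)\to 0$ by a Laplace argument), contradicting $|m_{k_0}|>0$.

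\textbf{Comparison.} Your Jacobian/contraction approach is more conceptual and sidesteps both the series expansion and the \cite{PT} monotonicity lemma; it would also generalise more readily beyond polynomial $V_k$. The paper's approach, by contrast, reduces everything to a purely one-dimensional single-species computation and leans only on structure already developed in \cite{PT}.

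\textbf{One soft spot.} You claim $\mathrm{Var}_{\mu_k^{m,\sigma}}(X)=O(\sigma_k^{2/p_k})$ \emph{uniformly for $m$ in a bounded region}, and then invoke an a~priori bound to confine fixed points to some $K(\sigma)$. But $K(\sigma)$ may grow with $\sigma$, so uniformity over a fixed bounded set is not quite enough; as written there is a small circularity. In fact the variance bound holds uniformly over \emph{all} $m\in\bRb^M$: after $x=\sigma_k^{1/p_k}y$ the $y$-potential is, up to corrections vanishing as $\sigma_k\to\infty$, of the form $c\,y^{2p_k}-\tilde\gamma y$ with $\tilde\gamma\in\bRb$ arbitrary, and the variance of such a tilted measure is bounded uniformly in $\tilde\gamma$ (indeed it tends to $0$ as $|\tilde\gamma|\to\infty$). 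This gives a global Lipschitz constant $O(\sigma_k^{2/p_k-2})\to 0$ for $\Phi^\sigma$, and the separate a~priori-bound step becomes unnecessary.
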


\begin{proof}
For any $1\leq k\leq M$, we put
\[
\overline{\alpha}_k:=\sum_{\ell=1}^Ma_\ell\alpha_{k\ell} \quad  \mbox{and} \quad
A_k:=\sum_{\ell=1}^Ma_\ell\alpha_{k\ell}m_\ell\,,
\]
where $m=(m_1,\ldots,m_M)\in\bRb^M$. We notice that \eqref{eq: eqns mk:dim1} can be written  as:
\[
m_k=\frac{\int_\bRb x\exp\left\{-\frac{2}{\sigma_k^2}\left[V_k(x)+\frac{\overline{\alpha_k}}{2}x^2-A_kx\right]\right\}\ddx}{\int_\bRb\exp\left\{-\frac{2}{\sigma_k^2}\left[V_k(x)+\frac{\overline{\alpha_k}}{2}x^2-A_kx\right]\right\}\ddx}\,.
\]
We immediately deduce that $m_k$ and $A_k$ have the same sign if $m$ is a solution to \eqref{eq: eqns mk:dim1}.

Consider now $k_0\in\{1,\ldots,M\}$ such that for any $1\leq\ell\leq M$, we have $|m_\ell|\leq|m_{k_0}|$. Due to the positivity of $\alpha_{k\ell}$ for any $1\leq k,\ell\leq M$ (thanks to Assumption~\ref{severine2}), we obtain the following estimate, provided that $|m_{k_0}|>0$:
\[
\frac{|m_{k_0}|}{|A_{k_0}|}\geq\frac{|m_{k_0}|}{\sum_{\ell=1}^Ma_\ell\alpha_{k_0\ell}|m_\ell|}\geq\frac{1}{\overline{\alpha_{k_0}}}\,.
\]
In particular, we deduce
\[
\frac{|A_{k_0}|}{\overline{\alpha}_{k_0}}\leq\frac{\int_\bRb x\exp\left\{-\frac{2}{\sigma_{k_0}^2}\left[V_{k_0}(x)+\frac{\overline{\alpha}_{k_0}}{2}x^2-A_{k_0}x\right]\right\}\ddx}{\int_\bRb\exp\left\{-\frac{2}{\sigma_{k_0}^2}\left[V_{k_0}(x)+\frac{\overline{\alpha}_{k_0}}{2}x^2-A_k{k_0}\right]\right\}\ddx}\,.
\]
Without any loss of generality, we assume that $k_0=1$ and that $A_{1}>0$. Then, the latter inequality becomes
\begin{equation}
\label{thea}
\int_\bRb\left(x-\frac{A_1}{\overline{\alpha}_1}\right)\exp\left\{-\frac{2}{\sigma_1^2}\left[V_1(x)+\frac{\overline{\alpha}_1}{2}x^2-A_1x\right]\right\}\ddx\geq0\,.
\end{equation}

By $U(x)$, we denote the quantity $V_1(x)+\frac{\overline{\alpha}_1}{2}x^2$. We point out that the potential $U$ is even. We set $\sigma:=\sigma_1$. We can rewrite the above inequality as follows:
\[
\int_\bRb\left(x-\frac{A_1}{\overline{\alpha}_1}\right)\exp\left\{-\frac{2}{\sigma^2}U(x)\right\}\eee^{\frac{2A_1}{\sigma^2}x}\ddx\geq0\,.
\]
We now follow a strategy similar to the one in \cite{PT}. In particular, we consider a series expansion of $x\mapsto\eee^{\frac{2A_1}{\sigma^2}x}$. From this we deduce that
\[
\sum_{k=0}^\infty\frac{1}{k!}\left(\frac{2A_1}{\sigma_1^2}\right)^k\left(I_{k+1}(\sigma)-\frac{A_1}{\overline{\alpha}_1}I_k(\sigma)\right)\geq0\,,
\]
where $I_\ell(\sigma):=\int_\bRb x^\ell\exp\left[-\frac{2}{\sigma^2}U(x)\right]\ddx$ for any $\ell\in\bNb$. Since $U$ is even, we deduce that $I_{2\ell+1}(\sigma)=0$ for any $\ell\in\bNb$. The inequality becomes
\[
\sigma^2\sum_{\ell=0}^\infty\frac{I_{2\ell}(\sigma)}{(2\ell)!}\left(\frac{2A_1}{\sigma^2}\right)^{2\ell+1}\left(\frac{I_{2\ell+2}(\sigma)}{(2\ell+1)\sigma^2I_{2\ell}(\sigma)}-\frac{1}{\overline{\alpha}_1}\right)\geq0\,.
\]
In \cite{PT}, it was shown that the sequence $\ell\mapsto\frac{I_{2\ell+2}(\sigma)}{(2\ell+1)\sigma^2I_{2\ell}(\sigma)}$ does not increase. We will now show that for sufficiently large $\sigma$, the first term of the sequence is negative. We consider the following function
\[
\zeta(\sigma):=\frac{I_2(\sigma)}{\sigma^2I_{0}(\sigma)} = \frac{\int_\bRb x^2\eee^{-\frac{2U(x)}{\sigma^2}}\ddx}{\sigma^2\int_\bRb\eee^{-\frac{2U(x)}{\sigma^2}}\ddx}\,.
\]
Then, by making the change of variables $x:=\sigma y$, we find
\[
\zeta(\sigma)=\frac{\int_\bRb y^2\exp\left(-2\frac{U(\sigma y)}{\sigma^2}\right)\ddy}{\int_\bRb\exp\left(-2\frac{U(\sigma y)}{\sigma^2}\right)\ddy}\,.
\]
Using the form of $U$, we obtain
\[
\zeta(\sigma)=\frac{\int_\bRb y^2\exp\left\{-2\left(\sum_{\ell=2}^{p_1}\frac{\theta_1^{(2\ell)}}{2\ell}\sigma^{2\ell-2}y^{2\ell}+\frac{\overline{\alpha}_1-\theta_1^{(2)}}{2}y^2\right)\right\}\ddy}{\int_\bRb\exp\left\{-2\left(\sum_{\ell=2}^{p_1}\frac{\theta_1^{(2\ell)}}{2\ell}\sigma^{2\ell-2}y^{2\ell}+\frac{\overline{\alpha}_1-\theta_1^{(2)}}{2}y^2\right)\right\}\ddy}\,.
\]
By a straightforward application of Laplace's method (see \cite{HT1,HT2,HT3}), we deduce that
\[
\lim_{\sigma\to+\infty}\zeta(\sigma)={\color{black}0}\,.
\]
In particular, since the function $\zeta$ is continuous, there exists $\sigma_0$ such that for any $\sigma\geq\sigma_0$, we have
\[
\zeta(\sigma){\color{black}<}\frac{1}{\overline{\alpha}_1}\,,
\]
and we conclude that $\frac{I_{2\ell+2}(\sigma)}{(2\ell+1)\sigma^2I_{2\ell}(\sigma)}{\color{black}-\frac{1}{\overline{\alpha}_1}}$ is negative for any $\ell\in\bNb$. This implies that inequality~\eqref{thea} is not valid and that, in particular, assumption $|m_{k_0}|>0$ is wrong. We deduce that the only possible solution is $m_k=0$ for any $1\leq k\leq M$. We can easily verify that it corresponds to a solution since $V_k$ is even for any $k$ and since then $A_k=0$ for any $k$.

\end{proof}

\subsection{Phase transitions and Non-Uniqueness of stationary sates}
\label{subsec:pha_tran_non_uniq}

In Section~\ref{smallnoise}, it was shown that, under appropriate assumptions on the confining and interaction potentials, each solution to the self-consistency equations~\eqref{eq: eqns mk:dim1} provided us with a stationary state, provided that the noise strengths are small enough. In particular, more than one stationary state might exist. In addition, in Section~\ref{bignoise}, we proved--under additional assumptions--the existence and uniqueness of a stationary state for sufficiently large diffusion coefficients. Therefore, a natural question is whether a phase transition occurs. This is the result that was proved in, e.g. ~\cite{Dawson83,PT, shiino1987}, for the single species case under hypotheses similar to Assumption~\ref{severine} and Assumption~\ref{severine2}. The purpose of the present section is to extend these results to the multi-species case. A general result for the problem of $M$ species appears to be beyond reach at present, at least using the techniques of~\cite{PT}. We will restrict ourselves to the case where the structural condition in Assumption~\ref{asp: assumptionterbis} is satisfied.
\begin{thm}
\label{negan}
Under Assumptions~\ref{asp: assumption},~\ref{asp: assumptionbis},~\ref{asp: assumptionterbis},~\ref{severine} and~\ref{severine2}, a phase transition occurs. More precisely, each stationary state is of the form $\mu_\sigma\otimes\ldots\otimes\mu_\sigma$ where $\mu_\sigma$ is a given probability measure on $\bRb$ such that
\[
\mu_\sigma(\ddx)=\frac{\exp\left\{-\frac{2}{\sigma^2}\left(\overline{V}^0(x)-Ax\right)\right\}}{\int_\bRb\exp\left\{-\frac{2}{\sigma^2}\left(\overline{V}^0(y)-Ay\right)\right\}\ddy}\,\ddx\,,
\]
where $\overline{V}^0(x)=\overline{V}(x)+\frac{\overline{\alpha}_1}{2}x^2$ with $\overline{\alpha}_1:=\sum_{\ell=1}^Ma_\ell\alpha_{1\ell}$ and $A$ is satisfying
\[
\frac{A}{\overline{\alpha}_1}=\frac{\int_\bRb x\exp\left\{-\frac{2}{\sigma^2}\left(\overline{V}^0(x)-Ax\right)\right\}\ddx}{\int_\bRb\exp\left\{-\frac{2}{\sigma^2}\left(\overline{V}^0(x)-Ax\right)\right\}\ddx}\,.
\]
Moreover, there exists a $\sigma_c>0$ such that if $\sigma \geq\sigma_c$, the system admits a unique stationary state ($A=0$) and if $\sigma<\sigma_c$, there are exactly three stationary states associated to $A=0$, $A>0$ and $A<0$). Furthermore, the critical value $\sigma_c$ is the unique positive real number which satisfies
\begin{equation}\label{e:critical_temp}
\frac{\int_\bRb x^2\exp\left\{-\frac{2}{\sigma_c^2} \overline{V}^0(x)\right\}\ddx}{\int_\bRb\exp\left\{-\frac{2}{\sigma_c^2} \overline{V}^0(x)\right\}\ddx}=\frac{\sigma_c^2}{2\overline{\alpha}_1}\,.
\end{equation}
\end{thm}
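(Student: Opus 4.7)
The plan is to exploit the structural assumption to reduce the multi-species self-consistency system to a single scalar equation, and then apply the moment-series technique from~\cite{PT} (used already in the proof of Theorem~\ref{lola}) to analyse this scalar equation as a function of $\sigma$.

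\textbf{Step 1: reduction to a single-species problem.} By Proposition~\ref{jugnot}, any stationary state $(\mu^1,\ldots,\mu^M)$ satisfies the Gibbs ansatz~\eqref{eq: muk}. Inserting $F_{k\ell}(x)=\tfrac{\alpha_{k\ell}}{2}|x|^2$ and using the structural Assumption~\ref{asp: assumptionterbis}, i.e.\ $V_k/\sigma_k^2=\overline V/\sigma^2$ and $\alpha_{k\ell}/\sigma_k^2=\alpha_\ell/\sigma^2$, the exponent $-\tfrac{2}{\sigma_k^2}\bigl(V_k(x)+\sum_\ell a_\ell F_{k\ell}\ast\mu^\ell(x)\bigr)$ loses its $k$-dependence (the quadratic $y^2$ part of the convolution disappears into the normalization). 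Consequently, all marginals $\mu^k$ coincide with a common probability measure $\mu_\sigma$; in particular every stationary state is of the product form $\mu_\sigma\otimes\cdots\otimes\mu_\sigma$, and the common first moment is denoted $m$.

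\textbf{Step 2: reduction to a scalar self-consistency equation.} Setting $\overline V^0(x):=\overline V(x)+\tfrac{\overline\alpha_1}{2}x^2$ with $\overline\alpha_1=\sum_\ell a_\ell\alpha_{1\ell}$, and $A:=\overline\alpha_1 m$, the common marginal takes the claimed explicit form. The condition $m=\int x\,\mu_\sigma(\mathrm dx)$ then yields the stated fixed-point equation for $A$. Since $\overline V^0$ is even by Assumption~\ref{severine}, $A=0$ is always a solution, corresponding to the symmetric stationary state.

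\textbf{Step 3: phase-transition analysis via moment expansion.} I would follow exactly the Taylor-series strategy from the proof of Theorem~\ref{lola}. Writing the self-consistency equation as $\int (x-A/\overline\alpha_1)\,e^{-\frac{2}{\sigma^2}\overline V^0(x)}e^{\frac{2Ax}{\sigma^2}}\,\mathrm dx=0$, expanding $e^{2Ax/\sigma^2}$, and using that the odd moments of $e^{-2\overline V^0/\sigma^2}$ vanish, one obtains (after dividing by $A$)
\begin{equation*}
\sum_{\ell=0}^\infty \frac{I_{2\ell}(\sigma)}{(2\ell)!}\Bigl(\frac{2A}{\sigma^2}\Bigr)^{\!2\ell}\Bigl[\,2\,r_\ell(\sigma)-\frac{1}{\overline\alpha_1}\Bigr]=0,\qquad r_\ell(\sigma):=\frac{I_{2\ell+2}(\sigma)}{(2\ell+1)\sigma^2 I_{2\ell}(\sigma)},
\end{equation*}
with $I_{2\ell}(\sigma)=\int x^{2\ell}e^{-2\overline V^0/\sigma^2}\,\mathrm dx$. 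The key input from~\cite{PT} is that $\ell\mapsto r_\ell(\sigma)$ is non-increasing. Hence, if $2r_0(\sigma)\le 1/\overline\alpha_1$, every bracket is non-positive and the positive series cannot vanish for $A\neq 0$, giving uniqueness of the trivial solution. If instead $2r_0(\sigma)>1/\overline\alpha_1$, the $\ell=0$ bracket is positive while higher brackets eventually become negative, producing by an intermediate-value/monotonicity argument exactly two nontrivial roots $\pm A_\sigma\neq 0$ (their equality in modulus being forced by the oddness in $A$ coming from $\overline V^0$ even).

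\textbf{Step 4: identification and uniqueness of the critical temperature.} The boundary between the two regimes is precisely $2r_0(\sigma_c)=1/\overline\alpha_1$, which upon rewriting $I_2/I_0=\int x^2\mu\,\mathrm dx$ gives exactly~\eqref{e:critical_temp}. Uniqueness of $\sigma_c$ follows by showing that $\sigma\mapsto 2r_0(\sigma)-1/\overline\alpha_1$ changes sign only once: the Laplace-method computation already carried out in the proof of Theorem~\ref{lola} yields $r_0(\sigma)\to+\infty$ as $\sigma\to+\infty$, while for $\sigma\to 0$ the measure $e^{-2\overline V^0/\sigma^2}$ concentrates at the (unique even) minimum of $\overline V^0$, giving $r_0(\sigma)\to 0$; combined with continuity and the strict monotonicity of $\sigma\mapsto I_2/(\sigma^2 I_0)$ that one can check by differentiating under the integral (or by a rescaling $x=\sigma y$ as in Theorem~\ref{lola}), this produces a unique crossing.

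\textbf{Main obstacle.} The genuine difficulty is Step~3: passing from \emph{non-existence of nontrivial roots} (which the sign analysis gives directly when $\sigma\ge\sigma_c$) to the \emph{exact count of three} stationary states when $\sigma<\sigma_c$. Showing that the bracketed series, viewed as a function of $A^2$, has exactly one positive root requires a refined monotonicity argument on the partial sums of the series, as in the proof of the single-species Desai--Zwanzig phase transition in~\cite{Dawson83,PT}; this is the step that truly relies on the quadratic form of the interaction (Assumption~\ref{asp: assumptionbis}) and on the polynomial structure of $\overline V$ (Assumption~\ref{severine}), since only then does the sign pattern of the brackets become fully controllable via the monotonicity of $r_\ell$.
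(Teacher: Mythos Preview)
Your proposal is correct and follows essentially the same route as the paper: reduce via the structural assumption to a single scalar self-consistency equation (your Steps~1--2 match the paper's derivation of equation~\eqref{marion}), then apply the moment-series technique of~\cite{PT}. The paper's own proof is in fact terser than yours---after deriving the scalar equation it simply states that one ``applies the techniques developed in the proof of~\cite[Theorem~2.1]{PT}'' and analyses the shape of $\psi(A)=\int(x-A/\overline\alpha_1)\exp\{-\tfrac{2}{\sigma^2}(\overline V^0(x)-Ax)\}\,\ddx$ on $\bRb_+$---so your Step~3 unpacks what the paper leaves implicit.

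One correction in Step~4: you have the two limits of $r_0(\sigma)$ reversed. Under the rescaling $x=\sigma y$, as $\sigma\to+\infty$ the leading term $\sigma^{2p-2}y^{2p}$ in the exponent dominates and forces concentration at $y=0$, hence $r_0(\sigma)=\langle y^2\rangle\to 0$. Conversely, as $\sigma\to 0$ Laplace's method at the minima of $\overline V^0$ gives $I_2/I_0\to x_0^2>0$ (so $r_0=I_2/(\sigma^2 I_0)\to+\infty$), or, if $\overline\alpha_1>\theta^{(2)}$ and the unique minimum is at the origin, $r_0\to 1/(2(\overline\alpha_1-\theta^{(2)}))>1/(2\overline\alpha_1)$ since $\theta^{(2)}>0$ by Assumption~\ref{severine}. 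Either way $2r_0>1/\overline\alpha_1$ for small $\sigma$ and $2r_0<1/\overline\alpha_1$ for large $\sigma$, and the crossing argument goes through in the correct direction. (The write-up of Theorem~\ref{lola} in the paper appears to contain the same reversal as a typo, which is presumably what misled you.)
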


\begin{proof}
From Assumption~\ref{asp: assumptionterbis}, it follows that the invariant probability measure associated with species $k$ is the following Gibbs measure:
\[
Z_k^{-1}\exp\left\{-\frac{2}{\sigma_k^2}\left(V_k^0(x)-x\sum_{\ell=1}^Ma_\ell\alpha_{k\ell}m_\ell\right)\right\}\,\ddx\,,
\]
where $m=(m_1,\ldots,m_M)$ is a solution to the system~\eqref{eq: eqns mk:dim1} and $V_k^0(x)=V_k(x)+\frac{\overline{\alpha_k}}{2}x^2$ with $\overline{\alpha}_k:=\sum_{\ell=1}^Ma_\ell\alpha_{k\ell}$. We note that, under the structural assumption \ref{asp: assumptionterbis}, $\frac{V_k^0}{\sigma_k^2}=\frac{\overline{V}^0}{\sigma^2}$ and $\frac{\sum_{\ell=1}^Ma_\ell\alpha_{k\ell}m_\ell}{\sigma_k^2}=\frac{\sum_{\ell=1}^Ma_\ell\alpha_{1\ell}m_\ell}{\sigma^2}$. Consequently, if $m$ is a solution of the self-consistency equations~\eqref{eq: eqns mk:dim1}, we can write
\[
m_k=\frac{\int_\bRb x\exp\left\{-\frac{2}{\sigma^2}\left(\overline{V}^0(x)-Ax\right)\right\}\ddx}{\int_\bRb\exp\left\{-\frac{2}{\sigma^2}\left(\overline{V}^0(x)-Ax\right)\right\}\ddx}\,,
\]
where $A=\sum_{\ell=1}^Ma_\ell\alpha_{1\ell}m_\ell$. In particular, we have $m_k=m_1=\frac{A}{\overline{\alpha}_1}$. The self-consistency equation becomes
\begin{equation}
\label{marion}
\frac{A}{\overline{\alpha}_1}=\frac{\int_\bRb x\exp\left\{-\frac{2}{\sigma^2}\left(\overline{V}^0(x)-Ax\right)\right\}\ddx}{\int_\bRb\exp\left\{-\frac{2}{\sigma^2}\left(\overline{V}^0(x)-Ax\right)\right\}\ddx}\,.
\end{equation}

This completes the proof of the first part of the theorem. To prove the existence of a phase transition, we apply the techniques developed in the proof of \cite[Theorem 2.1.]{PT}. We use the series expansion of $x\mapsto \eee^{\frac{2Ax}{\sigma^2}}$ introduced earlier; by carefully studying the different terms in the expansion, we can show that the function 
$$
\psi(A):=\int_\bRb\left(x-\frac{A}{\overline{\alpha}_1}\right)\exp\left\{-\frac{2}{\sigma^2}\left(\overline{V}^0(x)-Ax\right)\right\}\ddx
$$
is either odd and decreases on $\bRb_+$ or increases and then decreases toward $-\infty$, as explained in Step 3 of the proof of  \cite[Theorem 2.1.]{PT}.  Since $\psi(A)=0$, we always have a solution with $A=0$. By following carefully the steps described in~\cite{PT}, it is straightforward to prove that the function $\psi$ is decreasing if $\sigma$ is greater than some critical value $\sigma_c$. Furthermore, it increases and then decreases if $\sigma$ is strictly less than this value $\sigma_c$. This completes the second part of the proof. The proof of the last part (the characterization of the critical value) proceeds as in~\cite{PT}.

\end{proof}
We emphasize that we do not really use Assumption~\ref{severine2} in Theorem~\ref{negan}. In particular, we could allow the coefficients $\alpha_{k\ell}$ to be negative. The only coefficient that enters into the proof of the theorem is $\overline{\alpha}_1$. Three cases may occur:
\begin{itemize}
 \item Case I: $\overline{\alpha}_1>0$. Then, the statement of Theorem~\ref{negan} holds.
 \item Case II: $\overline{\alpha}_1<0$. Then, the self-consistency equation~\eqref{marion} does not have a solution for any $A\neq0$. In fact, since $A$ and the expression $\frac{\int_\bRb x\exp\left\{-\frac{2}{\sigma^2}\left(\overline{V}^0(x)-Ax\right)\right\}\ddx}{\int_\bRb\exp\left\{-\frac{2}{\sigma^2}\left(\overline{V}^0(x)-Ax\right)\right\}\ddx}$ have the same sign, the quantity $\frac{A\sigma_c^2}{2\overline{\alpha}_1}$ cannot be equal to $\frac{\int_\bRb x\exp\left\{-\frac{2}{\sigma^2}\left(\overline{V}^0(x)-Ax\right)\right\}\ddx}{\int_\bRb\exp\left\{-\frac{2}{\sigma^2}\left(\overline{V}^0(x)-Ax\right)\right\}\ddx}$ provided that $\overline{\alpha}_1<0$. Consequently, for all values of $\sigma>0$, there is always a unique stationary state that is the symmetric one corresponding to $A = 0$.
 \item Case III: $\overline{\alpha}_1=0$. Since all $m_k$'s are equal, we immediately deduce $A=\overline{\alpha}_1 m_1=0$ so $A$ is necessary equal to $0$; consequently, there is again a unique symmetric stationary state for all values of the noise strength.
\end{itemize}
To end this section, we present an example with an ``immigration'' scenario 
where several species with small (mutant/immigrant) populations interact with a species with a much larger (resident) population. We will assume that the interaction between species with small populations is different from the dominant one.
\begin{example}[Immigration scenario]
{\color{black}
Consider the case where $a_1=\ldots=a_{M-1}=\varepsilon$, $a_M=1-\varepsilon (M-1)$, $\alpha_1=\ldots=\alpha_{M-1}=\alpha$ and $\alpha_M\neq\alpha$, where $\varepsilon \in (0,1)$ sufficiently small. In this case, $\overline{\alpha}_1=(M-1)\varepsilon\alpha+\alpha_M(1-\varepsilon(M-1))=\alpha_M+\varepsilon(M-1)(\alpha-\alpha_M)$. We will focus on the critical value for the phase transition with and without immigration. To do so, we first remark that the function
\[
\sigma\mapsto\frac{\int_\bRb x^2\exp\left\{-\frac{2}{\sigma^2}\overline{V}^0(x)\right\}\ddx}{\int_\bRb\exp\left\{-\frac{2}{\sigma^2}\overline{V}^0(x)\right\}\ddx}
\]
is increasing. If $\alpha<\alpha_M$ (that is $\overline{\alpha}_1<\alpha_M$ and so $\frac{1}{\overline{\alpha}_1}>\frac{1}{\alpha_M}$), then immigration raises the critical value and favors multiple invariant probability measures, indicating greater dynamical complexity. Conversely, if $\alpha>\alpha_M$, it lowers the critical value and favors uniqueness of the stationary state. From a biological point of view, a unique stationary state indicates that immigration drives the system toward a single robust long-term community structure. In contrast, multiple stationary states suggest that the system can sustain alternative long-term community structures, with the eventual outcome depending on the initial population composition and other factors. This coexistence of several stable configurations, with the one eventually realised selected by the system's history, is precisely the notion of \emph{alternative stable states} in ecology~\cite{beisner2003alternative,scheffer2001catastrophic}. We refer the reader to \cite{may1972will,allesina2015stability} for further discussion on the stability-complexity relationship in ecology.





}

\end{example}

\section{Linearization of the multi-species McKean-Vlasov System}
\label{sec:lin_stab}

In this section, we characterize the null space of the linearized multi-species McKean-Vlasov operator and perform the linear stability analysis for the nonlinear nonlocal McKean-Vlasov PDE systems \eqref{eq: multi-species PDE} around the equilibrium distributions.  We will occasionally use the notation $\beta^{-1} = \frac{\sigma^2}{2}.$

We start by linearizing the general system~\eqref{eq: multi-species PDE} around (one of) the stationary states described by Proposition~\ref{jugnot}, see Equation~\eqref{eq: muk}. Upon setting $\mu_t^i = \mu_{\infty}^i + \epsilon \mu_t^{L,i}$, using the equations for the invariant measures and collecting terms of $O(\epsilon)$) the linearized system takes the form 
\begin{equation}
\partial_t \mu_t^{L,i} = \frac{\sigma_i^2}{2} \Delta \mu_t^{L,i} +  {\color{black}{\rm div} }\left(\mu_t^{L,i} \nabla V_i\right) 
+  {\color{black}{\rm div} }\left( \left(\sum_{j=1}^M a_j \nabla F_{ij} {\color{black}\ast} \mu_t^{L,{\color{black}j}} \right) \mu_{\infty}^{i} + \left(\sum_{j=1}^M a_j \nabla F_{ij} {\color{black}\ast} \mu_{\infty}^{i} \right) \mu_{t}^{L,i} \right),  \label{e:lin_gen}
\end{equation}
for $i=1,\dots M$. The stability of stationary states is determined by the spectral gap of the matrix-valued integrodifferential operator defined above. It is possible to show that, under the assumption of uniqueness of a stationary state, the nonlinear, nonlocal McKean-Vlasov PDE is exponentially close in time (e.g., in relative entropy) to the linearized system~\eqref{e:lin_gen}. In particular, we can extend the recent "stability" result from~\cite{pavliotis2025linearization} to the multi-species case.\footnote{Note, however, that the linearization in~\cite{pavliotis2025linearization} is done at the level of the McKean SDE, rather than at the level of the McKean-Vlasov PDE. The linearization of the PDE leads to an additional term, namely the third term on the righthand side of~\eqref{e:lin_gen}. The details will be presented elsewhere.} Furthermore, it should be possible to extend the local convergence results obtained in~\cite{monmarché2024localconvergencerateswasserstein} to the multidimensional case--this is certainly clear for the multi-species Desai--Zwanzig model, in particular under the structural assumption. The analysis will be presented in a future publication.

In the rest of this section, we will consider the multi-species Desai--Zwanzig model in $d=1$, under the structural assumption, Assumption~\ref{asp: assumptionterbis}. In particular, we place ourselves in the context of Theorem~\ref{negan}. The linearized McKean-Vlasov equation in this case, see Equations~\eqref{e:structural} and~\eqref{e:struct_generator}, become
\begin{equation} \label{e:linearized_struct}
\partial_t \mu_t^{L,i}=\div\Big[\Big(\nabla\bar{V}+\sum_{j=1}^Ma_j\alpha_j(x-m_j^{\infty})\Big) \mu_t^{L,i} - \sum_{j=1}^Ma_j\alpha_j m_j(t) \mu_{\infty}^i \Big]+\frac{\sigma^2}{2}\Delta \mu_t^{L,i},
\end{equation}
where $m_{\infty}^i = \int x \mu_{\infty}^i \, dx$.

The main result of this section is the following.
\begin{prop}\label{prop:null_space_multi}
Consider the multi-species system in $d=1$ and assume that Assumptions~\ref{asp: assumption}, \ref{asp: assumptionbis},~\ref{asp: assumptionterbis},~\ref{severine} and~\ref{severine2} hold. Then, for the bistable potential $\bar{V} = \frac{x^4}{4} - \frac{x^2}{2}$:

(a) For $\sigma > \sigma_c$, the linearized system~\eqref{e:linearized_struct} is stable, that is, perturbations from the unique stationary distribution decay to zero exponentially fast in $L^2(\R , \rho_{\infty})$.

(b) At the critical interaction strength $\sigma = \sigma_c$ given by~\eqref{e:critical_temp}, the null space of the linearized McKean-Vlasov operator is two-dimensional and consists of $\{ \mu_\sigma, \ldots ,\mu_\sigma \}$ and $\{ x \mu_\sigma, \ldots, x \mu_\sigma \}$.
\end{prop}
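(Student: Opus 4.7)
Under Assumption~\ref{asp: assumptionterbis}, Theorem~\ref{negan} guarantees that at $\sigma=\sigma_c$ the unique stationary state is symmetric: all species share the common marginal $\mu_\sigma$ with $\int x\,\mu_\sigma\,\ddx = 0$. Setting $\bar{\alpha} := \sum_j a_j\alpha_j$ and $\bar V^0(x) := \bar V(x) + \tfrac{\bar\alpha}{2}x^2$, the linearized equation~\eqref{e:linearized_struct} reduces to
\[
\partial_t \mu_t^{L,i} = \tfrac{\sigma_i^2}{\sigma^2}\bigl[\mathcal{L}_0 \mu_t^{L,i} - m(t)\,\partial_x \mu_\sigma\bigr], \qquad m(t) := \sum_{j=1}^M a_j\alpha_j \int x\,\mu_t^{L,j}(x)\,\ddx,
\]
where $\mathcal{L}_0 \phi := \partial_x[(\bar V^0)'(x)\phi] + \tfrac{\sigma^2}{2}\partial_x^2\phi$ is the single-species Fokker-Planck operator on $\mathbb{R}$. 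The crucial feature is that all species couple only through the single scalar functional $m(t)$.

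\textbf{Plan for part (b).} The cornerstone of the computation is the identity
\[
\mathcal{L}_0(x\mu_\sigma) = \tfrac{\sigma^2}{2}\,\partial_x \mu_\sigma,
\]
which I would verify by a direct calculation using $\partial_x \mu_\sigma = -\tfrac{2}{\sigma^2}(\bar V^0)'\mu_\sigma$ together with the stationarity $\mathcal{L}_0\mu_\sigma = 0$. For $(\mu_\sigma,\ldots,\mu_\sigma)$ both $\mathcal{L}_0\mu_\sigma = 0$ and $m = \bar\alpha\int x\mu_\sigma\,\ddx = 0$, so the vector lies in the null space. For $(x\mu_\sigma,\ldots,x\mu_\sigma)$ the coupling contribution is $m = \bar\alpha\langle x^2\rangle_{\mu_\sigma}$, and the residual $\tfrac{\sigma^2}{2}\partial_x\mu_\sigma - \bar\alpha\langle x^2\rangle_{\mu_\sigma}\partial_x\mu_\sigma$ vanishes exactly when $\sigma=\sigma_c$, by the critical equation~\eqref{e:critical_temp}. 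For the dimension count, take a general kernel element: since $\ker\mathcal{L}_0 = \mathrm{span}(\mu_\sigma)$ and $\tfrac{2}{\sigma^2}x\mu_\sigma$ is a particular solution of $\mathcal{L}_0\psi = \partial_x\mu_\sigma$, each component has the form $\phi^i = c_i\mu_\sigma + \tfrac{2m}{\sigma^2}x\mu_\sigma$. After restricting to the species-symmetric invariant subspace natural under the structural assumption, the $c_i$'s collapse to a single parameter $c$, while self-consistency yields $m\bigl(1 - \tfrac{2\bar\alpha}{\sigma^2}\langle x^2\rangle_{\mu_\sigma}\bigr)=0$, which is automatic at $\sigma_c$ and leaves $m$ free. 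This produces the claimed two-dimensional null space spanned by $(\mu_\sigma,\ldots,\mu_\sigma)$ and $(x\mu_\sigma,\ldots,x\mu_\sigma)$.

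\textbf{Plan for part (a).} Looking for modal solutions $\mu_t^{L,i}(x) = e^{\lambda t}\phi^i(x)$, the linearized equations become resolvent equations
\[
\bigl(\tfrac{\sigma^2}{\sigma_i^2}\lambda - \mathcal{L}_0\bigr)\phi^i = -m\,\partial_x\mu_\sigma,
\]
and imposing self-consistency on $m$ yields the scalar characteristic equation
\[
F(\lambda) := 1 - \sum_{j=1}^M a_j\alpha_j\int x\,\bigl(\mathcal{L}_0 - \tfrac{\sigma^2}{\sigma_j^2}\lambda\bigr)^{-1}\partial_x\mu_\sigma\,\ddx = 0.
\]
Using the identity from part (b) one computes $F(0) = 1 - \tfrac{2\bar\alpha}{\sigma^2}\langle x^2\rangle_{\mu_\sigma}$. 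The monotonicity argument from the proof of Theorem~\ref{negan} shows that $\sigma\mapsto \langle x^2\rangle_{\mu_\sigma}/\sigma^2$ crosses $1/(2\bar\alpha)$ precisely at $\sigma_c$, so $F(0)\neq 0$ for $\sigma>\sigma_c$ and $\lambda=0$ is not an eigenvalue. Since $\mathcal{L}_0$ is self-adjoint and negative semi-definite in $L^2(\mu_\sigma^{-1})$ with simple zero eigenvalue spanned by $\mu_\sigma$, a continuity argument tracing the unique eigenvalue that crosses zero at $\sigma_c$ (anchored at the large-$\sigma$ regime where Theorem~\ref{lola} yields a globally attractive state) shows that the full spectrum of the linearized operator stays in the open left half-plane for $\sigma>\sigma_c$, giving exponential decay of perturbations in $L^2(\mu_\sigma^{-1})$.

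\textbf{Main obstacle.} The delicate step is the dimension count in (b): the componentwise equation $\mathcal{L}_0\phi^i = m\,\partial_x\mu_\sigma$ admits an $(M{+}1)$-parameter family of solutions---the $M$ independent mass-shift modes $c_i\mu_\sigma$ plus the coupled $x\mu_\sigma$ direction---so the two-dimensional null space asserted in the proposition corresponds to the species-symmetric invariant subspace that is natural under the structural symmetry. Making this identification precise and relating it to the physically relevant instability manifold is the main technical point. For (a), the rank-one coupling is not symmetric in $L^2(\mu_\sigma^{-1})$, so ruling out complex eigenvalue crossings into the right half-plane as $\sigma$ decreases toward $\sigma_c$ requires a Birman-Schwinger-type analysis of the characteristic function $F(\lambda)$ rather than a direct application of spectral theorems.
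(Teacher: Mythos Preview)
Your approach for part (b) is essentially the same as the paper's: verify directly that $(\mu_\sigma,\ldots,\mu_\sigma)$ and $(x\mu_\sigma,\ldots,x\mu_\sigma)$ lie in the kernel, using the identity $\mathcal{L}_0(x\mu_\sigma)=\tfrac{\sigma^2}{2}\partial_x\mu_\sigma$ together with the critical relation~\eqref{e:critical_temp}. The paper carries this out as two short single-species lemmas (Lemmas~4.2 and~4.3) and then simply says that part (b) ``follows from the structural assumption and Lemma~4.3'', invoking Dawson's argument (footnoted in Remark~4.4) that $\{\rho_\infty,\,x\rho_\infty\}$ exhausts the single-species kernel.

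You have in fact been more careful than the paper on the dimension count. Your observation that the componentwise kernel equation $\mathcal{L}_0\phi^i=m\,\partial_x\mu_\sigma$ yields an $(M{+}1)$-parameter family---with $M$ independent mass modes $c_i\mu_\sigma$ and one shared $x\mu_\sigma$ direction---is correct, and the paper does not address it: it tacitly identifies the multi-species kernel with the single-species one via the structural assumption without discussing the extra $c_i$-freedom. Your resolution via the species-symmetric subspace is the natural reading of the proposition, and is what the paper effectively assumes.

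For part (a) your characteristic-equation route is genuinely different and more hands-on. The paper's argument is a one-line transfer: under the structural assumption each species equation is a time-rescaled copy of the single-species linearized McKean--Vlasov operator, and Dawson's variational spectral-gap bound (Remark~4.4) already gives strict negativity on the complement of the kernel for $\sigma>\sigma_c$. Your Birman--Schwinger analysis of $F(\lambda)$ would also work and has the advantage of treating the multi-species coupling explicitly, but as you note it requires ruling out complex eigenvalue crossings since the rank-one perturbation is not symmetric; the paper sidesteps this entirely by leaning on Dawson's self-adjoint single-species result.
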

In view of the structural assumption, the analysis of this problem follows from~\cite[Sec. 3.4]{Dawson83} for the single species case; we review this case next.

\paragraph{Review of the single-species case from~\cite[Sec. 3.4]{Dawson83}}
 
The single-species model with quadratic interaction potential and bistable confining potential is referred to in the literature as the  Desai--Zwanzig model. We consider the case of the mean-field interaction  $f(x) = \frac{1}{2} x^{2}$. The invariant density of the McKean-Vlasov-Fokker-Planck equation can be written as
\begin{equation}\label{e:stat-DZ}
\rho_{s}(x ; \beta , m)  = \frac{1}{Z(\beta ; m)} \eee^{-\beta \left( V(x) + \theta \left( \frac{x^{2}}{2} - x m \right) \right)}, \quad Z(\beta ; m) = \int \eee^{-\beta \left(V(x) + \theta \left(\frac{x^{2}}{2} - x m \right) \right)} \, \ddx,
\end{equation}
where $m$ denotes the first stationary moment
$
m = \int x \rho_{s}(x ; \beta , m) \, \ddx.
$ 
We will use the notation $\langle \cdot \rangle_{s} := \int \cdot  \rho_{s}(x ; \beta , m) \, \ddx$.
Combining the above two equations, we obtain the self-consistency equation
\begin{equation}\label{e:self-consist}
m = R(m), \quad R(m) := \frac{1}{Z(\beta ; m)} \int x \eee^{-\beta \left(V(x) + \theta \left(\frac{x^{2}}{2} - x m \right) \right)} \, \ddx.
\end{equation}
We differentiate the self-consistency equation with respect to $m$ to obtain an equation for the critical temperature $\beta_{c}^{-1}$ at which the bifurcation occurs. We first calculate the derivative of the partition function with respect to the order parameter $m$: $
\frac{\partial Z(\beta ; m)}{\partial m} = \beta \theta \langle  x\rangle_{s}.
$

Now we have:
\begin{eqnarray*}
1 & = & \int x \left(- \frac{1}{Z^{2}(\beta ; m)} \frac{\partial Z(\beta ; m)}{\partial m} \eee^{-\beta \left( V(x) + \theta \left( \frac{x^{2}}{2} - x m \right) \right)} \right) \, \ddx \\ &&+ \frac{1}{Z(\beta ; m)} \beta  \theta \int x^{2} \eee^{-\beta \left(V(x) + \theta \left(\frac{x^{2}}{2} - x m \right) \right)} \, \ddx
\\ & = &  \beta \theta \left( -\langle x \rangle^{2}_{s} + \langle x^{2} \rangle_{s}  \right).
\end{eqnarray*}
Consequently, the equation for the critical temperature is
\begin{equation}\label{e:critic-temper}
\mbox{Var}_{s}(x) = \beta^{-1} \theta^{-1},
\end{equation}
where $\mbox{Var}_{s}$ denotes the stationary variance at $m=0$. This is the analog of Equation~\eqref{e:critical_temp} for the multiscpecies case, under the structural assumption. The expectation is calculated at $m=0$. We will sometimes use the notation $ \langle \cdot \rangle = \mathbb{E}_{\sigma, \theta, m=0} \cdot$. 

We linearize the Mckean-Vlasov operator around the stationary state $\rho_{\infty}^{\beta, \theta, m=0}$, i.e. below the phase transition:
\begin{equation}\label{e:FP_lin}
\mathcal{L}^*_{lin} p = \beta^{-1} p'' + (V_{\theta}' p)' - \theta \Big(\int y p(y)\,dy\Big) \partial_x \rho_{\infty}^{\beta, \theta, m=0}\,.
\end{equation}
where $V_\theta=V+\theta x^2/2$. We can now characterize the null space of the linearized McKean-Vlasov operator at the critical temperature.
\begin{lemma}
We have that 
$$
\mathcal{L}^*_{lin}  \rho_{\infty}^{\beta, \theta, m=0} = 0 .
$$
\end{lemma}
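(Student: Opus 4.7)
The plan is to verify directly that every term on the right-hand side of~\eqref{e:FP_lin} vanishes when evaluated at $\rho_\infty := \rho_{\infty}^{\beta, \theta, m=0}$, exploiting two independent features: the stationary equation satisfied by $\rho_\infty$ and the symmetry inherited from the even confining potential.

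First, I would observe that when $m=0$ the Gibbs form~\eqref{e:stat-DZ} reduces to
\[
\rho_\infty(x) = \frac{1}{Z(\beta;0)}\,\eee^{-\beta V_\theta(x)},
\]
since the linear term $-\beta\theta x m$ disappears. A direct differentiation gives $\beta^{-1}\rho_\infty'(x) = -V_\theta'(x)\rho_\infty(x)$, and consequently
\[
\beta^{-1}\rho_\infty''(x) + \bigl(V_\theta'(x)\rho_\infty(x)\bigr)' = \bigl(\beta^{-1}\rho_\infty'(x) + V_\theta'(x)\rho_\infty(x)\bigr)' = 0.
\]
Thus the first two contributions to $\mathcal{L}^*_{lin}\rho_\infty$ vanish identically.

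Second, I would invoke the symmetry of the bistable confining potential $V(x) = x^4/4 - x^2/2$, which is even, so that $V_\theta$ is even and hence $\rho_\infty$ is an even function of $x$. This immediately yields
\[
\int y\,\rho_\infty(y)\,\ddy = 0,
\]
so that the self-consistency feedback term $-\theta\bigl(\int y\,\rho_\infty(y)\,\ddy\bigr)\partial_x\rho_\infty$ also vanishes. Combining the two observations gives $\mathcal{L}^*_{lin}\rho_\infty = 0$.

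There is no genuine obstacle here; the only subtlety is making sure the formula for $\rho_\infty$ at $m=0$ is read off correctly from~\eqref{e:stat-DZ} and that the self-consistency equation $m = R(m)$ is indeed solved by $m=0$ thanks to the evenness of $V$ (which ensures the Gibbs density is symmetric). This is exactly the statement that $m=0$ is always a stationary point of the self-consistency map, independently of $\beta$ and $\theta$, and it is the reason why $\rho_\infty$ lies in the null space of the linearized operator at every temperature (not only at the critical one).
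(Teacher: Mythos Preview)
Your proof is correct and matches the paper's approach exactly: the paper also observes that the first two terms cancel because $\rho_\infty$ satisfies the stationary Fokker--Planck equation, and that the nonlocal term vanishes since $\langle x\rangle = 0$ by evenness of $V_\theta$.
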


\begin{proof}
It follows from direct calculation:
$$
\mathcal{L}^*_{lin}  \rho_{\infty}^{\beta, \theta, m=0} = \beta^{-1} \partial_x^2 \rho_{\infty}^{\beta, \theta, m=0} + \partial_x (V_{\theta}' \rho_{\infty}^{\beta, \theta, m=0}) - \theta \langle  x \rangle \partial_x \rho_{\infty}^{\beta, \theta, m=0} = 0,  
$$
since the sum of the first two terms gives zero and $\langle x \rangle = 0$. 
\end{proof}

The null space of the linearized McKean-Vlasov operator is two-dimensional 
at the critical temperature.

\begin{lemma}\label{lem:null_space}
We have that 
$$
\mathcal{L}^*_{lin}  (x \rho_{\infty}^{\beta_c, \theta_c, m=0}) \vert_{(\beta \theta) = \beta^c \theta^c} = 0 .
$$
\end{lemma}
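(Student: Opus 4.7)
The plan is to compute $\mathcal{L}^*_{lin}(x\rho_\infty)$ directly, exploiting the Gibbs form of $\rho_\infty^{\beta,\theta,m=0}$ and then invoking the characterization~\eqref{e:critic-temper} of the critical temperature. The only mildly delicate point is to keep track of the nonlocal contribution, and to recognize that it cancels precisely the local one exactly when $(\beta\theta)=(\beta_c\theta_c)$.

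First I would rewrite the linear part of $\mathcal{L}^*_{lin}$ in divergence form. Writing $\rho_\infty = Z^{-1} e^{-\beta V_\theta}$, we have the identity
\[
\beta^{-1} p'' + (V_\theta' p)' \;=\; \beta^{-1}\partial_x\!\left(\rho_\infty\,\partial_x\!\left(\tfrac{p}{\rho_\infty}\right)\right),
\]
which follows since $\rho_\infty\partial_x(p/\rho_\infty) = p' + \beta V_\theta' p$. Applied to $p(x) = x\rho_\infty(x)$, we have $p/\rho_\infty = x$, so $\partial_x(p/\rho_\infty)=1$ and the local part collapses to
\[
\beta^{-1}\partial_x\rho_\infty \;=\; -V_\theta'(x)\,\rho_\infty(x),
\]
using $\partial_x\rho_\infty = -\beta V_\theta'\rho_\infty$.

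Next I would handle the nonlocal term. For $p(x)=x\rho_\infty(x)$,
\[
\int y\,p(y)\,\dd y \;=\; \int y^2\,\rho_\infty^{\beta,\theta,m=0}(y)\,\dd y \;=\; \mathrm{Var}_\infty(x),
\]
since $\rho_\infty^{\beta,\theta,m=0}$ has mean zero (it is an even density, given that $\bar V$ is even and $m=0$). Using once more $\partial_x\rho_\infty=-\beta V_\theta'\rho_\infty$, the nonlocal term becomes
\[
-\theta\,\mathrm{Var}_\infty(x)\,\partial_x\rho_\infty \;=\; \beta\theta\,\mathrm{Var}_\infty(x)\,V_\theta'(x)\,\rho_\infty(x).
\]

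Combining the two contributions gives
\[
\mathcal{L}^*_{lin}(x\rho_\infty) \;=\; V_\theta'(x)\,\rho_\infty(x)\,\bigl(\beta\theta\,\mathrm{Var}_\infty(x) - 1\bigr).
\]
At the critical point, the characterization~\eqref{e:critic-temper} gives precisely $\beta_c\theta_c\,\mathrm{Var}_\infty(x) = 1$, so the bracket vanishes and the whole expression is zero. This yields the stated result; combined with the preceding lemma (that $\rho_\infty^{\beta_c,\theta_c,m=0}$ also lies in the kernel), it shows that the null space of $\mathcal{L}^*_{lin}$ is at least two-dimensional at criticality, consistent with the two-dimensional null space asserted in Proposition~\ref{prop:null_space_multi}(b). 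The only step that is not completely mechanical is the bookkeeping in the nonlocal term, and even there the computation is short once one recognizes that $\int y\cdot y\rho_\infty\,\dd y$ is exactly the variance appearing in the critical-temperature equation.
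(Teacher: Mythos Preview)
Your proof is correct and follows essentially the same approach as the paper: a direct computation of $\mathcal{L}^*_{lin}(x\rho_\infty)$ that reduces to a factor $(\beta\theta\,\mathrm{Var}_\infty(x)-1)$, which vanishes at criticality by~\eqref{e:critic-temper}. The only difference is organizational: you exploit the factorization $\beta^{-1}p''+(V_\theta'p)'=\beta^{-1}\partial_x\bigl(\rho_\infty\,\partial_x(p/\rho_\infty)\bigr)$ to collapse the local part in one line, whereas the paper expands $(x\rho_\infty)''$ and $(V_\theta'x\rho_\infty)'$ by the product rule and then groups the resulting terms into (i) $x$ times the stationary Fokker--Planck identity, (ii) the zero-flux identity $\beta^{-1}\partial_x\rho_\infty+V_\theta'\rho_\infty=0$, and (iii) the remainder $\beta_c^{-1}\partial_x\rho_\infty-\theta_c\langle y^2\rangle\partial_x\rho_\infty$; both routes arrive at the same cancellation.
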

\begin{proof}
We calculate:
\begin{eqnarray*}
&&\mathcal{L}^*_{lin}  (x \rho_{\infty}^{\beta_c, \theta_c, m=0}) \vert_{(\beta \theta) = \beta^c \theta^c} 
\\
& = & 
\beta_c^{-1} (x \rho_{\infty}^{\beta_c, \theta_c, m=0}) '' + (V'_{\theta_c} x  \rho_{\infty}^{\beta_c, \theta_c, m=0} )' - \theta_c  \left( \int y^2 \rho_{\infty}^{\beta_c, \theta_c, m=0} \, dy \right) \partial_x \rho_{\infty}^{\beta_c, \theta_c, m=0}
\\
& = & 
\beta_c^{-1} \left( 2 \partial_x \rho_{\infty}^{\beta_c, \theta_c, m=0}  + x \partial_x^2 \rho_{\infty}^{\beta_c, \theta_c, m=0} \right) + x (V'_{\theta_c} \rho_{\infty}^{\beta_c, \theta_c, m=0})' 
\\ && + 
V'_{\theta_c} \rho_{\infty}^{\beta_c, \theta_c, m=0} - \theta_c \langle y^2 \rangle_{\rho_{\infty}^{\beta_c, \theta_c, m=0}}  \partial_x \rho_{\infty}^{\beta_c, \theta_c, m=0}
\\ & =& 
x \underbrace{\left(\beta_c^{-1}  \partial^2_x \rho_{\infty}^{\beta_c, \theta_c, m=0}  +  \partial_x (V'_{\theta_c} \rho_{\infty}^{\beta_c, \theta_c, m=0}) \right)}_{=0} + \underbrace{\left(\beta^{-1}_c\partial_x\rho_{\infty}^{\beta_c, \theta_c, m=0}+V'_{\theta_c}\rho_{\infty}^{\beta_c, \theta_c, m=0}\right)}_{=0}\\
&&
+\left(\beta_c^{-1} \partial_x \rho_{\infty}^{\beta_c, \theta_c, m=0} - \theta_c \langle y^2 \rangle_{\rho_{\infty}^{\beta_c, \theta_c, m=0}}  \partial_x \rho_{\infty}^{\beta_c, \theta_c, m=0}\right)
\\ & = &
\beta_c^{-1} \partial_x \rho_{\infty}^{\beta_c, \theta_c, m=0} 
- \theta_c \langle y^2 \rangle_{\rho_{\infty}^{\beta_c, \theta_c, m=0}}  \partial_x \rho_{\infty}^{\beta_c, \theta_c, m=0}
\\ & = & 
0,
\end{eqnarray*}
in view of~\eqref{e:critic-temper}. 

\end{proof}

\begin{remark}\label{rem:spectral_gap}
From the argument in~\cite[p. 57]{Dawson83}, it follows that $\{\rho_{\infty}^{\beta_c, \theta_c, m=0}, x \rho_{\infty}^{\beta_c, \theta_c, m=0} \}$ are the only two linearly independent solutions of equation $\mathcal{L}^*_{lin} f = 0$.
\footnote{Alternatively, we can look for solutions to $\mathcal{L}^*_{lin} f = 0$ of the form $f = h \rho_{\infty}^{\beta, \theta, m=0}$ to obtain $\mathcal{L}_{lin} h := \beta^{-1} h'' - V'_{\theta} \big(h' - \beta \theta \int y h \rho_{\infty}^{\beta, \theta, m=0} \, dy \big) = 0 $. We immediately confirm that $\mathcal{L}_{lin} 1 = 0$ for all values of $\beta, \theta$ and that $\mathcal{L}_{lin} x = 0$ for $\beta \theta = \beta_c \theta_c$. In addition, we readily check that $\mathcal{L}_{lin} x^n \neq 0$ for all $n =2,3 \dots$, and for $V_{\theta}(x) = \frac{x^4}{4} - (1- \theta) \frac{x^2}{2}$.} 
Furthermore, we note that the spectral gap of the linearized McKean-Vlasov operator~\eqref{e:FP_lin}, when restricted to the subspace of $L^2(\R ; \rho_{\infty}^{\beta_c, \theta_c, m=0})$ that is orthogonal to $\{\rho_{\infty}^{\beta_c, \theta_c, m=0}, x \rho_{\infty}^{\beta_c, \theta_c, m=0} \}$ is larger than the positive first nonzero eigenvalue of the operator considered on $L^2(\R ; \rho_{\infty}^{\beta_c, \theta_c, m=0})$. This follows immediately from the variational characterization of the spectral gap--see~\cite[Lemma 3.4.1]{Dawson83}. 
\end{remark}

\smallskip

\noindent {\it Proof of Proposition~\ref{prop:null_space_multi}} The proof of Part (a) follows from the structural assumption and Remark~\ref{rem:spectral_gap}. The proof of Part (b) follows from the structural assumption and Lemma~\ref{lem:null_space}.
\qed

\smallskip

We remark that, for the case of the multi-species Desai--Zwanzig model, and, in particular, under the structural assumption, the coupled McKean-Vlasov PDE is equivalent to the infinite-dimensional ODE system of the moments. Since the calculations are very similar to the ones presented in~\cite{Dawson83}, we do not present the details.

%
%
\section{Convergence \RV{of the solutions and the energy functionals}}
\label{sec:converg}
In this section, we study {\color{black}the limiting set in long-time for the solutions of the coupled mean-field PDE system. If moreover the set of invariant probability measures (that is a set of $M$-tuples) is discrete, then, we obtain the} convergence to a stationary state. \RV{We also study the convergence of the associated energy functionals.} 

Throughout the section, we assume that the initial conditions for the coupled PDEs are absolutely continuous with respect to the Lebesgue measure and with finite entropy, which implies that the free-energy is finite. We deduce that the probability measures $\mu_t^\ell$ are all absolutely continuous with respect to the Lebesgue measure, for any $t\geq0$. Consequently, the weak convergence of the measures $\mu_t^\ell$ is equivalent to convergence of the corresponding densities with respect to Lebesgue measure in {\color{black}the following function space:

\[
\left\{f:\bRb^d\longrightarrow\bRb\,\vert\,\mathcal{N}(f)<+\infty\right\}\,,
\]
equipped with the norm $\mathcal{N}$ defined as $\mathcal{N}(f):=\int_{\bRb^d}\left(1+|x|^{8p^2}\right)\left|f(x)\right|\ddx$. Let us stress that in fact, we consider the subset of functions $f$ such that $f(x)\geq0$ and $\int_{\bRb^d}f(x)\ddx=1$.} Since the moments of $\mu_t^\ell$ are uniformly upper-bounded, see~\cite{DuongTugaut2020}, we immediately deduce that the families $\left\{\mu_t^\ell\,;\,t\geq0\right\}$ are tight for any $1\leq\ell\leq M$.

\begin{defn}
\label{sangoku}
By $\mathcal{A}_\sigma$ (resp. $\mathcal{S}_\sigma$), we denote the set of the limiting values of the family $\left\{\mu_t^\ell\,;\,t\geq0\right\}$ (resp. the set of the invariant probabilities of the mean field SDE~\eqref{eq: mean-field SDE}){\color{black}; for the aforementioned topology.}
\end{defn}

\begin{defn}
\label{sangohan}
We say that a set $\mathcal{D}$ of measures in $\bRb^M$ is discrete if for any $\nu\in\mathcal{D}$, there exists a neighbourhood $\mathcal{V}$ of $\nu$ for the topology of weak convergence such that $\mathcal{D}\bigcap\mathcal{V}=\left\{\nu\right\}$. Similarly, we say that $\mathcal{D}$ is path-connected if it is path-connected for the topology {\color{black}mentioned at the beginning of the current section.}
\end{defn}

{\color{black}We stress that from now on, weak convergence will always refer to the topology associated to the norm $\mathcal{N}$, mentioned at the beginning of the section.}

We now give the three main results of this section. The first one concerns the topology of the set $\mathcal{A}_\sigma$.

\begin{thm}
\label{chichi}
Let Assumptions~\ref{asp: assumption}, \ref{asp: assumptionbis} and \ref{asp: assumptionter} hold. Then, the set $\mathcal{A}_\sigma$ contains either a single element $\mu^\sigma\in\mathcal{S}_\sigma$ or a path-connected subset of $\mathcal{S}_\sigma$. Moreover, for all $\mu\in\mathcal{A}_\sigma$, we have $\displaystyle\Upsilon_\sigma(\mu)=L_\sigma:=\lim_{t\to+\infty}\Upsilon_\sigma(\mu_t)$.
\end{thm}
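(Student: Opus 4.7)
The plan is to combine the gradient-flow structure of Section~\ref{subsec:free_energy} with a classical $\omega$-limit-set argument, and then to exploit the finite-dimensional parametrisation of $\mathcal{S}_\sigma$ provided by Proposition~\ref{jugnot} to promote connectedness to path-connectedness. First, the uniform-in-time moment bounds of Theorem~\ref{thm: well-posedness} yield tightness of the family $\{\mu_t=(\mu_t^1,\ldots,\mu_t^M):t\geq 0\}$, so $\mathcal{A}_\sigma$ is non-empty. Under Assumption~\ref{asp: assumptionter} the free-energy $\Upsilon_\sigma$ is lower-bounded, and the entropy-dissipation identity of Section~\ref{subsec:free_energy} reads
$$
\frac{\dd}{\ddt}\Upsilon_\sigma(\mu_t)=-\mathcal{D}(\mu_t),\qquad \mathcal{D}(\mu_t):=\sum_{k=1}^M a_k\int_{\bRb^d}\Big|\tfrac{\sigma_k^2}{2}\tfrac{\nabla\mu_t^k}{\mu_t^k}+\nabla V_k+\sum_{\ell=1}^M a_\ell\nabla F_{k\ell}\ast\mu_t^\ell\Big|^2\mu_t^k\,\ddx\geq 0,
$$
so $L_\sigma:=\lim_{t\to+\infty}\Upsilon_\sigma(\mu_t)$ exists in $\bRb$ and $\int_0^\infty\mathcal{D}(\mu_t)\,\ddt<\infty$.

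Next, to show that every $\mu\in\mathcal{A}_\sigma$ lies in $\mathcal{S}_\sigma$ and satisfies $\Upsilon_\sigma(\mu)=L_\sigma$, I would fix $t_n\to\infty$ with $\mu_{t_n}\to\mu$ weakly and consider the shifted trajectories $s\mapsto\mu_{t_n+s}$ on a fixed interval $[0,T]$. Tightness together with the continuous dependence of solutions to~\eqref{eq: multi-species PDE} on initial data (the content of Appendix~\ref{app:converg}) lets me extract a subsequence converging to a solution $(\tilde\mu_s)_{s\in[0,T]}$ with $\tilde\mu_0=\mu$, in a topology strong enough that $\mathcal{D}$ is lower-semicontinuous along the convergence. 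Since $\int_{t_n}^{t_n+T}\mathcal{D}(\mu_s)\,\dds\to 0$, Fatou's lemma forces $\mathcal{D}(\tilde\mu_s)=0$ for almost every $s\in[0,T]$; the characterization of the equality case derived in Section~\ref{subsec:free_energy} identifies each such $\tilde\mu_s$ as a stationary state, and the continuity of $s\mapsto\tilde\mu_s$ then gives $\tilde\mu_s\equiv\mu$, whence $\mu\in\mathcal{S}_\sigma$. The equality $\Upsilon_\sigma(\mu)=L_\sigma$ follows because the high-moment bounds give the uniform integrability needed to upgrade weak convergence $\mu_{t_n}\to\mu$ to continuity of the entropy term of $\Upsilon_\sigma$, so $\Upsilon_\sigma(\mu_{t_n})\to\Upsilon_\sigma(\mu)=L_\sigma$.

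For the topological statement, the map $t\mapsto\mu_t$ is continuous for the weak topology and has pre-compact range, so the standard theorem on $\omega$-limit sets of continuous semiflows gives that $\mathcal{A}_\sigma$ is non-empty, compact, and connected. To upgrade connectedness to path-connectedness when $|\mathcal{A}_\sigma|\geq 2$, I would identify $\mathcal{S}_\sigma$ with a subset of $(\bRb^d)^M$ via the first-moment map $\mu\mapsto(m_1,\ldots,m_M)$ arising from Proposition~\ref{jugnot} and the reduction~\eqref{eq: eqns mk}. Under the prevailing moment bounds, this map and its inverse are continuous for weak convergence, so $\mathcal{A}_\sigma$ corresponds to a connected subset of the real-analytic variety cut out by~\eqref{eq: eqns mk} in $(\bRb^d)^M$; at non-degenerate points this variety is locally a smooth submanifold by the implicit function theorem, hence locally path-connected, and a connected locally path-connected set is path-connected. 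In the degenerate case one argues directly from continuity of the flow: given $\mu,\nu\in\mathcal{A}_\sigma$ and sequences $t_n<s_n\to\infty$ with $\mu_{t_n}\to\mu$ and $\mu_{s_n}\to\nu$, the continuous curves $[t_n,s_n]\ni t\mapsto\mu_t$ admit, after tightness and an appropriate reparametrization, a subsequential limit that is a continuous path in $\mathcal{A}_\sigma$ joining $\mu$ to $\nu$.

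The main technical obstacle is the passage to the limit in the second step: one needs a compactness-and-continuity statement for the shifted trajectories strong enough that $(\tilde\mu_s)$ solves~\eqref{eq: multi-species PDE} and $\mathcal{D}$ enjoys a lower-semicontinuity property along the convergence. This is precisely the role of the technical result deferred to Appendix~\ref{app:converg}, and the overall strategy mirrors the single-species arguments of~\cite{AOP,SPA,HT3} adapted to the coupled multi-species setting.
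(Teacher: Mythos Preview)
Your LaSalle/shifted-trajectory strategy is a reasonable alternative to what the paper does, but it rests on two ingredients you do not supply and that the paper does not contain. First, you misidentify the content of Appendix~\ref{app:converg}: it is \emph{not} a compactness/continuous-dependence result for the flow, but Proposition~\ref{prop:a:othermeasure}, which constructs a stationary limiting value whenever the trajectory oscillates between two level sets of a test functional. Your argument needs (i) that weak limits of shifted solutions solve~\eqref{eq: multi-species PDE} and (ii) that the dissipation $\mathcal{D}$ is lower-semicontinuous along that convergence; neither is established anywhere in the paper, and neither is free in this setting. Second, the claim that ``high-moment bounds give the uniform integrability needed'' for $\Upsilon_\sigma(\mu_{t_n})\to\Upsilon_\sigma(\mu)$ is incomplete: moment bounds control the entropy tails, but not the contribution from $\{\mu_{t_n}^k>1\}$. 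The paper's Proposition~\ref{jigsaw} handles this via an $L^\infty$ bound on $\mu_{t_k}^k$, obtained from $\int|\nabla\mu_{t_k}^k|\leq C$, which in turn requires $\dot\xi(t_k)\to 0$; for an arbitrary sequence $t_n$ realizing a given $\mu\in\mathcal{A}_\sigma$ you do not have this.

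The paper's route avoids both difficulties by never passing to the limit in the PDE or in $\mathcal{D}$ along arbitrary sequences. It first produces \emph{one} stationary limiting value with free energy $L_\sigma$ (Proposition~\ref{thm:fr:subconv} and Proposition~\ref{jigsaw}, using a sequence with $\dot\xi(t_k)\to 0$). It then argues the two remaining claims by contradiction: if some $\nu\in\mathcal{A}_\sigma$ were non-stationary (resp.\ had free energy $\neq L_\sigma$), Proposition~\ref{edeka} separates $\nu$ from $\mathcal{S}_\sigma$ by a moment-gap, the trajectory must cross this gap infinitely often, and Proposition~\ref{prop:a:othermeasure} (which is what Appendix~\ref{app:converg} actually proves) then manufactures a \emph{stationary} limiting value with free energy $L_\sigma$ inside the gap, giving the contradiction. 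Your path-connectedness step is close to the paper's: both push the problem down to the first-moment image in $(\bRb^d)^M$ via the bijection coming from Proposition~\ref{jugnot} and~\eqref{eq: eqns mk}, though the paper works directly with the continuity of $t\mapsto\Lambda_1(\mu_t)$ rather than invoking the implicit function theorem.
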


From this theorem, we deduce the following statements.

\begin{cor}
\label{piccolo}
Let assumptions of Theorem~\ref{chichi} hold and assume, in addition, that the set $\mathcal{S}_\sigma\bigcap\Upsilon_\sigma^{-1}\left(\left\{\lambda\right\}\right)$ is discrete for any $\lambda\in\bRb$. Then  the probability measure $\mu_t$ weakly converges to an invariant probability $\mu^\sigma\in\mathcal{S}_\sigma$, as $t$ goes to infinity.
\end{cor}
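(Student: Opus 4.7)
The plan is to combine Theorem~\ref{chichi} with the new discreteness hypothesis to force $\mathcal{A}_\sigma$ to reduce to a single element, and then to upgrade the uniqueness of limit points into actual weak convergence using the tightness of $\{\mu_t\,:\,t\geq 0\}$ mentioned at the beginning of this section.

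First, by Theorem~\ref{chichi} every $\mu\in\mathcal{A}_\sigma$ satisfies $\Upsilon_\sigma(\mu)=L_\sigma$, so
\[
\mathcal{A}_\sigma \;\subset\; \mathcal{S}_\sigma\cap\Upsilon_\sigma^{-1}\big(\{L_\sigma\}\big),
\]
and by hypothesis the right-hand side is discrete in the sense of Definition~\ref{sangohan}. A subset of a discrete set inherits the property that each of its points admits a weak neighborhood meeting the subset only at that point.

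Second, I would verify the elementary fact that a path-connected subset of a discrete set (in the above sense) is a singleton. Given any continuous path $\gamma\colon[0,1]\to\mathcal{A}_\sigma$ for the weak topology and any $t\in[0,1]$, pick a weak neighborhood $\mathcal{V}_t$ of $\gamma(t)$ with $\mathcal{V}_t\cap\mathcal{S}_\sigma\cap\Upsilon_\sigma^{-1}(\{L_\sigma\})=\{\gamma(t)\}$. Then $\gamma^{-1}(\mathcal{V}_t)$ is an open neighborhood of $t$ in $[0,1]$ on which $\gamma$ is identically equal to $\gamma(t)$. Hence $\gamma$ is locally constant, and by connectedness of $[0,1]$ it is constant. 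Invoking the dichotomy in Theorem~\ref{chichi}, this forces $\mathcal{A}_\sigma=\{\mu^\sigma\}$ for some $\mu^\sigma\in\mathcal{S}_\sigma$.

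Third, I would upgrade this to weak convergence of $\mu_t$. The uniform-in-time moment bounds recalled at the start of Section~\ref{sec:converg} make the family $\{\mu_t\,:\,t\geq 0\}$ tight, so from any sequence $t_n\to+\infty$ one may extract a weakly convergent subsequence whose limit belongs to $\mathcal{A}_\sigma=\{\mu^\sigma\}$. By the standard subsequence argument for weak convergence in a metrizable setting, if $\mu_t$ did not weakly converge to $\mu^\sigma$, there would exist a bounded continuous test function $\varphi$, an $\varepsilon>0$ and a sequence $t_n\to+\infty$ with $\big|\int \varphi\,\dd\mu_{t_n}-\int\varphi\,\dd\mu^\sigma\big|\geq\varepsilon$; extracting a further subsequence converging weakly to some $\nu\in\mathcal{A}_\sigma$ would then give $\nu\neq\mu^\sigma$, contradicting the singleton property. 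Therefore $\mu_t$ converges weakly to $\mu^\sigma$ as $t\to+\infty$.

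The main (and rather minor) obstacle is purely topological, namely checking that the notion of discreteness in Definition~\ref{sangohan} rules out nontrivial path-connected subsets; the remainder of the argument is a direct consequence of Theorem~\ref{chichi} together with tightness, with no new analytical input required.
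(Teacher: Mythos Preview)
Your argument is correct and follows essentially the same route as the paper: use Theorem~\ref{chichi} to place $\mathcal{A}_\sigma$ inside the discrete level set $\mathcal{S}_\sigma\cap\Upsilon_\sigma^{-1}(\{L_\sigma\})$, then note that a path-connected subset of a discrete set is a singleton. You spell out two steps the paper leaves implicit (why path-connected plus discrete forces a singleton, and why a single limit point plus tightness yields weak convergence), but neither changes the substance of the proof.
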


\begin{cor}
\label{piccolo2}
Let assumptions of Theorem~\ref{chichi} hold, together with the synchronization assumption~\ref{asp: assumption:synchronization}. 
Then, the following limit holds for any $1\leq k\leq M$:
\begin{equation*}
\limsup_{\sigma\to0}\inf_{a\in\mathcal{G}_k}\limsup_{t\to+\infty}\int_{\bRb^d}\gaga x-a\drdr^2\mu_t^k(x)\ddx=0\,.
\end{equation*}
where $\mathcal{G}_k:=\left\{x\in\bRb^d\,\,\left|\right.\,\,\nabla V_k(x)=0\right\}$ denotes the set of critical points of $V_k$.

\end{cor}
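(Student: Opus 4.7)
The plan is to combine Theorem~\ref{chichi}, which reduces the asymptotic study of $\mu_t^k$ to the set $\mathcal{S}_\sigma$ of stationary measures, with the small-noise Laplace analysis of Section~\ref{smallnoise}, and to exploit the synchronization assumption~\ref{asp: assumption:synchronization} to identify the concentration supports with critical points of $V_k$. First, I would fix $\sigma$ and apply Theorem~\ref{chichi}: the orbit $\{\mu_t^k : t\geq 0\}$ is tight thanks to the uniform moment bounds of Theorem~\ref{thm: well-posedness}, and any weak limit lies in $\mathcal{S}_\sigma$. Combined with uniform integrability of $\|x\|^2$ against $\mu_t^k$, this upgrades weak limits to convergence of the second moments, so that for some invariant measure $\mu^{k,\sigma}\in\mathcal{S}_\sigma$ one has
\begin{equation*}
\limsup_{t\to+\infty}\int_{\mathbb{R}^d}\|x-a\|^2\mu_t^k(x)\,\ddx\;\leq\;\int_{\mathbb{R}^d}\|x-a\|^2\mu^{k,\sigma}(x)\,\ddx,
\end{equation*}
reducing the goal to $\displaystyle\limsup_{\sigma\to0}\inf_{a\in\mathcal{G}_k}\int_{\mathbb{R}^d}\|x-a\|^2\mu^{k,\sigma}(x)\,\ddx=0$.

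Second, using Proposition~\ref{jugnot} and the quadratic interactions of Assumption~\ref{asp: assumptionbis}, the invariant measure has the Gibbs form
\begin{equation*}
\mu^{k,\sigma}(x)\;\propto\;\exp\Big[-\tfrac{2}{\sigma_k^2}\big(V_k(x)+\tfrac{1}{2}\sum_{\ell=1}^M a_\ell\alpha_{k\ell}\|x-m_\ell^\sigma\|^2\big)\Big],
\end{equation*}
where $m_\ell^\sigma=\int x\,\mu^{\ell,\sigma}(\ddx)$. The growth conditions in Assumption~\ref{asp: assumption} together with Assumption~\ref{asp: assumption:synchronization} make the effective potential coercive uniformly for $f(\sigma)$ small and force $(m_1^\sigma,\ldots,m_M^\sigma)$ to stay bounded. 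Extracting a subsequential limit $(a_1^*,\ldots,a_M^*)$ and applying Laplace's method (as in Proposition~\ref{thunder} and Section~\ref{smallnoise}) yields $\mu^{k,\sigma}\rightharpoonup\delta_{a_k^*}$, with the limit points satisfying the algebraic self-consistency system
\begin{equation}
\nabla V_k(a_k^*)+\sum_{\ell=1}^M a_\ell\alpha_{k\ell}(a_k^*-a_\ell^*)=0,\qquad 1\leq k\leq M.
\label{eq:alg_sync_plan}
\end{equation}

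Third, I would invoke Assumption~\ref{asp: assumption:synchronization} to show that any solution of \eqref{eq:alg_sync_plan} is synchronized, $a_1^*=\cdots=a_M^*=:a^*$. Using the symmetry assumption~\ref{asp: assumptionter} inherited via Theorem~\ref{chichi}, the system \eqref{eq:alg_sync_plan} is the critical-point equation of the zero-noise effective energy
\begin{equation*}
\Psi(z_1,\ldots,z_M):=\sum_{k=1}^M a_k V_k(z_k)+\tfrac{1}{4}\sum_{k,\ell=1}^M a_k a_\ell\alpha_{k\ell}\|z_k-z_\ell\|^2,
\end{equation*}
which is the $\sigma\to0$ limit of $\Upsilon_\sigma$ along Dirac configurations. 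Since $\mu^{k,\sigma}$ is obtained along the descending free-energy flow of Theorem~\ref{chichi}, the tuple $(a_1^*,\ldots,a_M^*)$ is a local minimizer of $\Psi$; combining the dissipativity bound~\eqref{eq: V1} with the strict inequality $\theta_k<\sum_\ell\alpha_{k\ell}$, a second-variation argument at this minimizer forces the differences $a_k^*-a_\ell^*$ to vanish. Then \eqref{eq:alg_sync_plan} collapses to $\nabla V_k(a^*)=0$, so $a^*\in\mathcal{G}_k$. A standard Laplace expansion around the non-degenerate minimum of the effective potential then gives $\int\|x-a^*\|^2\mu^{k,\sigma}(x)\,\ddx\to0$, and taking $\inf_{a\in\mathcal{G}_k}$ followed by $\limsup_{\sigma\to0}$ finishes the proof.

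The main obstacle is the third step: rigorously deducing synchronization from Assumption~\ref{asp: assumption:synchronization}. The dissipativity bound is only one-sided (with the possibly positive constant $\theta_k$), and the quadratic form $\sum_{k,\ell}a_k a_\ell\alpha_{k\ell}\|a_k^*-a_\ell^*\|^2$ need not be sign-definite if some off-diagonal $\alpha_{k\ell}$ are negative, so the strict inequality $\theta_k<\sum_\ell\alpha_{k\ell}$ must be exploited by a careful weighted testing of \eqref{eq:alg_sync_plan} or a Hessian estimate for $\Psi$ at the minimizer. A related subtlety is the exclusion of multi-Dirac zero-noise limits of the form $\sum_i\lambda_i\delta_{\xi_i}$ mentioned in Section~\ref{smallnoise}: this is exactly what the synchronization hypothesis is designed to preclude, but the precise exclusion again rests on a detailed Laplace-type analysis of the Gibbs densities $\mu^{k,\sigma}$.
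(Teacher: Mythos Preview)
Your overall architecture matches the paper's: use Theorem~\ref{chichi} to reduce the long-time behaviour of $\mu_t^k$ to the set $\mathcal{S}_\sigma$ of stationary states, and then study the small-noise limit of $\mathcal{S}_\sigma$. The paper's proof is exactly this, phrased as (i) define $\mathcal{S}_0$ as the set of weak limits of stationary states along $\sigma\to0$; (ii) show $\mathcal{S}_0\subset\{\delta_{(a_1,\ldots,a_M)}:a_k\in\mathcal{G}_k\}$ by invoking the single-species results of \cite{JOTP}; (iii) deduce a uniform-in-$\nu\in\mathcal{S}_\sigma$ bound for small $\sigma$; (iv) conclude via $\mathcal{A}_\sigma\subset\mathcal{S}_\sigma$.

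There are, however, two genuine gaps in your execution. First, your Step~1 inequality is not correct as written: Theorem~\ref{chichi} does \emph{not} guarantee that $\mathcal{A}_\sigma$ is a singleton, so you cannot bound $\limsup_{t\to\infty}\int\|x-a\|^2\mu_t^k$ by the corresponding integral against a single $\mu^{k,\sigma}$. The $\limsup_t$ is the supremum over \emph{all} accumulation points, and the paper handles this (its Step~3) by establishing the bound uniformly over $\nu\in\mathcal{S}_\sigma$, not by selecting one.

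Second, and more importantly, your Step~3 takes a detour through full synchronization $a_1^*=\cdots=a_M^*$ that the paper neither attempts nor needs, and which is in fact false in general. The paper's own Appendix~\ref{app:pogacar} (see Sections~A.2--A.3) exhibits, under the synchronization Assumption~\ref{asp: assumption:synchronization}, solutions $(m_1,m_2)$ of the zero-noise self-consistency system~\eqref{eq: m-systems} with $m_1\neq m_2$ and $m_1,m_2$ \emph{not} critical points of $V$. So one cannot deduce $a_k^*\in\mathcal{G}_k$ by first proving that all $a_k^*$ coincide. Relatedly, your claim that $(a_1^*,\ldots,a_M^*)$ must be a local minimizer of $\Psi$ because it arises from free-energy descent is not justified: Theorem~\ref{chichi} only yields $\mathcal{A}_\sigma\subset\mathcal{S}_\sigma$ with constant free energy on $\mathcal{A}_\sigma$, without any minimality of the limit configuration. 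The paper bypasses all of this by appealing directly to the small-noise machinery of \cite{JOTP} (Proposition~3.10, Lemma~3.3, Theorem~3.7, Proposition~3.8 there) to conclude that each component of any element of $\mathcal{S}_0$ is a Dirac at a point of $\mathcal{G}_k$, \emph{without} requiring the points for different species to agree.
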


\begin{defn}
\label{fightclub}
We define the function space $\mathcal{M}$ as the set of positive $L^1$ functions $f$ with bounded moments of order $8 p^2$, where $p$ is defined in Assumption~\ref{asp: assumption}:\\[2pt]
{\bf 1.} For all $x\in\bRb^M$, $f(x)>0$.\\
{\bf 2.} We have $\int_{\bRb^M} f = 1$. \\
{\bf 3.} There exists an $M_0 >0$ such that  $\displaystyle\max_{1\leq\ell\leq8p^2}\int_{\bRb^d}\left| x\right|^\ell f(x)\ddx\leq M_0$.
\end{defn}

Set 
\[
\xi(t):=\Upsilon_\sigma\left(\mu_t^1,\ldots,\mu_t^M\right)\, ,
\]
where $\Upsilon_\sigma$ denotes the free energy~\eqref{eq: free energy}.
Due to the lower-bound of the free-energy functional $\Upsilon_\sigma$ and the fact that $\xi$ is non-increasing (see Section \ref{subsec:free_energy}), we obtain the next lemma.
\begin{lem}
\label{lem:fr:conv}
Under Assumption~\ref{asp: assumption}, Assumption~\ref{asp: assumptionbis} and Assumption~\ref{asp: assumptionter}, there exists a constant $L_\sigma\in\bRb$ such that $\Upsilon_\sigma(\mu_t^1,\ldots,\mu_t^M)$ converges to $L_\sigma$ in the limit as $t \rightarrow +\infty$.
\end{lem}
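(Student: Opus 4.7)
The plan is essentially to combine two properties of the free-energy functional $\Upsilon_\sigma$ that have already been established in Section~\ref{subsec:free_energy}, and then invoke the monotone convergence theorem for real sequences (or rather, its obvious analogue for monotone bounded real-valued functions of a continuous parameter).

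First, I would recall from the computation carried out in Section~\ref{subsec:free_energy} that, along the flow of the coupled PDE system~\eqref{eq: multi-species PDE},
\[
\frac{\dd}{\ddt}\Upsilon_\sigma(\mu_t^1,\ldots,\mu_t^M)=-\sum_{k=1}^M a_k\int_{\bRb^d}\Big|\frac{\sigma_k^2}{2}\frac{\nabla\mu_t^k}{\mu_t^k}+\nabla V_k+\sum_{\ell=1}^M a_\ell\nabla F_{k\ell}\ast\mu_t^\ell\Big|^2\mu_t^k\,\ddx\leq0\,,
\]
so that $\xi(t):=\Upsilon_\sigma(\mu_t^1,\ldots,\mu_t^M)$ is non-increasing on $[0,+\infty)$. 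This uses Assumption~\ref{asp: assumptionter} (so that the interaction potentials are symmetric and the system possesses a gradient structure), together with Assumptions~\ref{asp: assumption} and~\ref{asp: assumptionbis}, which guarantee enough regularity and integrability to justify the integration by parts.

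Second, I would recall the lower bound derived at the end of Section~\ref{subsec:free_energy}: using the quadratic form of the interaction potentials (Assumption~\ref{asp: assumptionbis}), the coercivity of the $V_k$'s at infinity (Assumption~\ref{asp: assumption}, in particular (H2) combined with (H3)), and the entropy decomposition with the $\sqrt{u}\log u\geq-2\eee^{-1}$ trick, we obtained a uniform bound of the form
\[
\Upsilon_\sigma(\mu^1,\ldots,\mu^M)\geq C+\lambda_-\,,
\]
where $C$ and $\lambda_-$ depend only on the potentials $V_k$, the coefficients $\alpha_{k\ell}$ and the noise strengths $\sigma_k$, and \emph{not} on the particular configuration $(\mu^1,\ldots,\mu^M)$. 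In particular, $\xi(t)$ is bounded below by a constant independent of $t$.

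Combining these two facts, $\xi(t)$ is a non-increasing real-valued function on $[0,+\infty)$ that is bounded below. By monotone convergence, the limit
\[
L_\sigma:=\lim_{t\to+\infty}\xi(t)=\inf_{t\geq0}\Upsilon_\sigma(\mu_t^1,\ldots,\mu_t^M)\in\bRb
\]
exists and is finite, which is precisely the statement of the lemma. There is no substantial obstacle here: the monotonicity and the lower bound have been proved in the preceding subsection, and the conclusion is a one-line consequence. The only minor point worth noting is that the initial value $\xi(0)=\Upsilon_\sigma(\mu_0^1,\ldots,\mu_0^M)$ is finite by the standing assumption at the start of Section~\ref{sec:converg} that the initial data are absolutely continuous with finite entropy, which ensures that the monotone decrease starts from a finite value.
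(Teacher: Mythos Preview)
Your proposal is correct and follows exactly the same approach as the paper: the paper simply states that the lemma follows from the lower bound on $\Upsilon_\sigma$ and the fact that $\xi$ is non-increasing, both established in Section~\ref{subsec:free_energy}. Your write-up is a more detailed version of the same one-line argument.
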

The second main result of this section shows the convergence of {\color{black} some sequence of the} solutions of the mean-field PDE \eqref{eq: multi-species PDE} to a stationary state.
\begin{prop}
\label{thm:fr:subconv}
There exists a stationary state $\left(\mu_\infty^1,\ldots,\mu_\infty^M\right)$ and a sequence $\{ t_k \}_{k\in\mathbb{N}}$ that converges to infinity such that, for any $1\leq\ell\leq M$, $\mu_{t_k}^\ell$ converges weakly to $\mu_\infty^\ell$, in the limit as $t_k \rightarrow +\infty$.
\end{prop}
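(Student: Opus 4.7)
The plan is to combine the tightness of the families $\{\mu_t^\ell\}_{t\ge 0}$ (already noted at the start of Section~\ref{sec:converg}, via the uniform moment bounds from~\cite{DuongTugaut2020}) with the monotone decay of the free-energy functional established in Section~\ref{subsec:free_energy}, and then to identify the resulting weak limit as a stationary state using Proposition~\ref{jugnot}. Concretely, I would first apply Prokhorov's theorem together with a diagonal extraction to produce a sequence $t_k\to+\infty$ along which $\mu_{t_k}^\ell\rightharpoonup\mu_\infty^\ell$ weakly for every $\ell=1,\ldots,M$.

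Denote the entropy-dissipation functional by
$$
D(\mu):=\sum_{k=1}^M a_k\int_{\bRb^d}\Big|\frac{\sigma_k^2}{2}\frac{\nabla\mu^k}{\mu^k}+\nabla V_k+\sum_{\ell=1}^M a_\ell\nabla F_{k\ell}\ast\mu^\ell\Big|^2\mu^k\,\ddx .
$$
The computation in Section~\ref{subsec:free_energy} shows $\frac{\dd}{\ddt}\Upsilon_\sigma(\mu_t)=-D(\mu_t)$; since $\Upsilon_\sigma$ is bounded below (Section~\ref{subsec:free_energy}) and $\Upsilon_\sigma(\mu_t)\to L_\sigma$ (Lemma~\ref{lem:fr:conv}), integrating in time yields
$$
\int_0^{+\infty}D(\mu_t)\,\ddt=\Upsilon_\sigma(\mu_0)-L_\sigma<+\infty .
$$
By passing to a further subsequence, still denoted $t_k$, I may arrange in addition that $D(\mu_{t_k})\to 0$.

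The main obstacle is to pass this vanishing of the dissipation to the weak limit, i.e.\ to show that $D(\mu_\infty)=0$. The convolution contributions $\nabla F_{k\ell}\ast\mu^\ell$ are affine in $x$ by Assumption~\ref{asp: assumptionbis}, so the uniform control of second moments allows them to pass to the limit. The genuine difficulty lies in the Fisher-information-type term $|\nabla\mu^k/\mu^k|^2\mu^k$, which is only lower semicontinuous in a sufficiently strong topology. This is precisely where the technical result advertised in Appendix~\ref{app:converg} enters, either as a direct lower-semicontinuity statement for $D$ along the PDE flow, or as a parabolic-regularity argument upgrading the weak convergence $\mu_{t_k}^\ell\rightharpoonup\mu_\infty^\ell$ to convergence of $\nabla\sqrt{\mu_{t_k}^\ell}$ strong enough to identify $D(\mu_\infty)\le\liminf_k D(\mu_{t_k})=0$.

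Once $D(\mu_\infty)=0$ is obtained, each summand vanishes, so on the support of $\mu_\infty^k$ one has
$$
\frac{\sigma_k^2}{2}\nabla\log\mu_\infty^k+\nabla V_k+\sum_{\ell=1}^M a_\ell\nabla F_{k\ell}\ast\mu_\infty^\ell=0 ,
$$
which, combined with the uniform moment bounds ensuring that $\mu_\infty^k$ is a probability density, is equivalent to the Gibbs form~\eqref{eq: muk}. Proposition~\ref{jugnot} then identifies $(\mu_\infty^1,\ldots,\mu_\infty^M)$ as a stationary state of~\eqref{eq: multi-species PDE}, completing the argument.
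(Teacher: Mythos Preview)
Your overall architecture---tightness via uniform moments, Prokhorov to extract a weakly convergent subsequence, and the observation that $\int_0^{+\infty}D(\mu_t)\,\ddt<+\infty$ so that along a further subsequence $D(\mu_{t_k})\to0$---matches the paper's proof. The divergence is in how you identify the limit as a stationary state.

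You propose to show $D(\mu_\infty)\le\liminf_k D(\mu_{t_k})=0$ by invoking lower semicontinuity of the Fisher-information-type term, and you attribute the missing piece to Appendix~\ref{app:converg}. This is a misreading: Appendix~\ref{app:converg} (Proposition~\ref{prop:a:othermeasure}) is a device for producing stationary states inside prescribed neighbourhoods, used later in the proof of Theorem~\ref{chichi}; it contains no lower-semicontinuity or parabolic-regularity statement for $D$. Moreover, lower semicontinuity of $D$ under mere weak convergence of measures is delicate here: one does not know \emph{a priori} that $\mu_\infty^\ell$ has a density, and the cross terms coupling $\nabla\log\mu^k$ with $\nabla V_k$ and the convolutions would need separate control. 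So as written this step is a genuine gap.

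The paper avoids this obstacle entirely by a weak-formulation argument. Writing $\eta_t^\ell:=\frac{\sigma_\ell^2}{2}\nabla\mu_t^\ell+(\nabla V_\ell+\sum_j a_j\nabla F_{\ell j}\ast\mu_t^j)\mu_t^\ell$, one tests against an arbitrary compactly supported smooth vector field $\varphi^\ell$ and uses Cauchy--Schwarz:
\[
\Big|\int\langle\varphi^\ell,\eta_{t_k}^\ell\rangle\,\ddx\Big|\le\sqrt{-\dot\xi(t_k)/a_\ell}\ \sqrt{\int|\varphi^\ell|^2\mu_{t_k}^\ell}\ \longrightarrow 0.
\]
An integration by parts shifts the derivative off $\mu_{t_k}^\ell$ onto $\varphi^\ell$, so the left-hand side becomes an integral of continuous, compactly supported functions against $\mu_{t_k}^\ell$ and converges by weak convergence. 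This yields the weak Euler--Lagrange equation for $\mu_\infty^\ell$, and Weyl's lemma then upgrades $\mu_\infty^\ell$ to a smooth density satisfying~\eqref{eq: muk}. The point is that by passing to the weak form one never needs lower semicontinuity of $D$ nor any prior regularity for $\mu_\infty^\ell$; both are outputs rather than inputs.
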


\begin{proof}

{\bf Plan:} we adapt the proof of \cite[Proposition 2.1]{JOTP}. We will use dots to denote differentiation with respect to time. First, we use the convergence of $\int_t^\infty \dot{\xi}(s) \, ds$ towards $0$ when $t$ tends to infinity  and we deduce the existence of a sequence $(t_k)_k$ such that $\dot{\xi} \left(t_k\right)$ tends to $0$ when $k$ goes to infinity. Then, the tightness of the sequences $\left\{\mu_{t_k}^\ell\,;\,k\in\mathbb{N}\right\}$ allows us to extract a subsequence of $\{ t_k \}_k$ - we will continue to write it $\{ t_k \}_k$ - such that for any $1\leq\ell\leq M$, $\mu_{t_k}^\ell$ weakly converges towards a limiting value of the family $\left\{\mu_{t}^\ell\,;\,t\in\mathbb{R}_+\right\}$. By using an appropriate test function and Weyl's lemma, we prove that this adherence value satisfies the system of equations~\eqref{eq: muk}. We deduce that $(\mu_\infty^1,\ldots,\mu_\infty^M)$ is a stationary state. Let us give now the details of the proof.

\noindent{}{\bf Step 1:} Lemma~\ref{lem:fr:conv} implies that the quantity $\zeta(t):=\int_t^\infty \dot{\xi}(s)ds$ converges to $0$ as $t \rightarrow +\infty$. 
Since the free-energy functional is lower-bounded on $\mathcal{M}$, $\xi(t)$ is monotonous so we deduce the existence of an increasing sequence $\{ t_k\})_{k\in\mathbb{N}}$ with $t_k \rightarrow +\infty$ such that $\dot{\xi}(t_k) \longrightarrow 0$.

\noindent{}{\bf Step 2:} The uniform boundedness with respect to time of the second moment allows us to use Prohorov's theorem: we can extract a subsequence, still denoted by $\{t_k \}_{k}$, such that $\mu_{t_k}^\ell$ weakly converges to a probability measure $\mu_\infty^\ell$, for all $1\leq\ell\leq M$.

\noindent{}{\bf Step 3:} We now consider a test function $\varphi^\ell\in\mathcal{C}^\infty\left(\mathbb{R}^d,\mathbb{R}^d\right)\bigcap L^2\left(\mathbb{R}^d, \, \mu_\infty^\ell\right)$ with compact support and we estimate the following quantity:
\[
\left|\int_{\mathbb{R}^d}\left\langle\varphi^\ell(x)\, , \,\eta_{t_k}^\ell(x)\right\rangle\ddx\right|\,,
\]
where $\langle \cdot , \cdot \rangle$ denotes the Euclidean inner product and
\begin{equation}
\label{lanvin}
\eta_t^\ell(x):=\frac{\sigma_\ell^2}{2}\nabla\mu_t^\ell(x)+\left(\nabla V_\ell(x)+\sum_{j=1}^Ma_j\nabla F_{\ell j}\ast\mu_t^j(x)\right)\mu_t^\ell(x)\,.
\end{equation}
We calculate the time derivative of the free energy, as was done in Section~\ref{subsec:free_energy}, to derive the bound
\[
\dot{\xi}(t)\leq-\sum_{\ell=1}^M\int_{\bRb^d} a_\ell\frac{\left|\eta_t^\ell(x)\right|^2}{\mu_t^\ell(x)}\ddx\,.
\]
We use the Cauchy-Schwarz inequality to deduce that
\begin{align*}
\left|\int_{\mathbb{R}^d}\left\langle\varphi^\ell(x)\, , \,\eta_{t_k}^\ell(x)\right\rangle \ddx\right|&=\left|\int_{\mathbb{R}^d}\left\langle\varphi^\ell(x)\sqrt{\mu_{t_k}^\ell(x)}\, , \,\frac{\eta_{t_k}^\ell(x)}{\sqrt{\mu_{t_k}^\ell(x)}}\right\rangle\ddx\right|\\
&\leq\sqrt{\int_{\mathbb{R}^d}\left|\varphi^\ell(x)\right|^2\mu_{t_k}^\ell(x)\ddx}\times\sqrt{\int_{\mathbb{R}^d}\frac{1}{\mu_{t_k}^\ell(x)}\left|\eta_{t_k}^\ell(x)\right|^2\ddx}
\end{align*}
We combine the above two estimates to obtain:

\[
\left|\int_{\mathbb{R}^d}\left\langle\varphi^\ell(x)\,, \,\eta_{t_k}^\ell(x)\right\rangle \ddx\right|\leq\sqrt{-\frac{\dot{\xi}(t_k)}{a_\ell}}\sqrt{\int_{\mathbb{R}^d}\left|\varphi^\ell(x)\right|^2\mu_{t_k}^\ell(x)\ddx}\longrightarrow0
\]
as $k \rightarrow +\infty$. We use our assumption that $\varphi$ has compact support, together with an integration by parts, to deduce that

\begin{align*}
&\int_{\mathbb{R}^d}\left\langle\varphi^\ell(x)\, , \,\frac{\sigma_\ell^2}{2}\nabla\mu_{t_k}^\ell(x)+\mu_{t_k}^\ell(x)\left[\nabla V_\ell(x)+\sum_{j=1}^Ma_j\nabla F_{\ell j}\ast \mu_{t_k}^j(x)\right]\right\rangle\ddx\\
=&\int_{\mathbb{R}^d}\left\langle\varphi^\ell(x)\, , \,\nabla V_\ell(x)+\sum_{j=1}^Ma_j\nabla F_{\ell j}\ast \mu_{t_k}^j(x)\right\rangle \mu_{t_k}^\ell(x)\ddx-\int_{\mathbb{R}^d}\frac{\sigma_\ell^2}{2}{\rm div }\left(\varphi^\ell(x)\right)\mu_{t_k}^\ell(x)\ddx.
\end{align*}
The weak convergence of $\mu_{t_k}^j$ towards $\mu_\infty^j$ for all $1\leq j\leq M$ implies that this term converges towards 

\[
\int_{\mathbb{R}^d}\left\langle\varphi^\ell(x)\, , \,\nabla V_\ell(x)+\sum_{j=1}^Ma_j\nabla F_{\ell j}\ast \mu_\infty^j(x)\right\rangle \mu_\infty^\ell(\ddx)-\int_{\mathbb{R}^d}\frac{\sigma_\ell^2}{2}{\rm div }(\varphi^\ell(x)) \mu_\infty^\ell(\ddx)
\]

On the other hand, it was proved previously that $\int_{\mathbb{R}^d}\left\langle\varphi^\ell(x)\, , \,\eta_{t_k}^\ell(x)\right\rangle\ddx$ converges to $0$ as $k  \rightarrow  \infty$. We deduce that, for any test function $\varphi^\ell\in\mathcal{C}^\infty\left(\mathbb{R}^d,\mathbb{R}^d\right)\bigcap L^2\left(\mu_\infty^\ell\right)$ with compact support, we have : 
\begin{eqnarray}
\label{eq:fr:weaksol}
\int_{\mathbb{R}^d}\left\langle\varphi^\ell(x)\, , \,\nabla V_\ell(x)+\sum_{j=1}^Ma_j\nabla F_{\ell j}\ast \mu_\infty^j(x)\right\rangle \mu_\infty^\ell(\ddx)-\int_{\mathbb{R}^d}\frac{\sigma_\ell^2}{2}{\rm div }\left(\varphi^\ell(x)\right)\mu_\infty^\ell(\ddx)=0\,.
\end{eqnarray}

\noindent{}{\bf Step 4:} {\color{black}We remark that the nonlinearity does not imply any difficulty here. Indeed, $\mu_\infty^1,\cdots,\mu_\infty^M$ are given measures satisfying Equation~\eqref{eq:fr:weaksol} so that by putting $W^\ell:=V_\ell+\sum_{j=1}^Ma_jF_{\ell j}\ast\mu_\infty^j(x)$, Equation~\eqref{eq:fr:weaksol} becomes

\[
\int_{\mathbb{R}^d}\left\langle\varphi^\ell(x)\, , \,\nabla W^\ell(x)\right\rangle\mu_\infty^\ell(\ddx)-\int_{\mathbb{R}^d}\frac{\sigma_\ell^2}{2}{\rm div }\left(\varphi^\ell(x)\right)\mu_\infty^\ell(\ddx)=0\,.
\]

This means that $\mu_\infty^\ell$ is a weak solution of the equation
\begin{eqnarray*}
\frac{\sigma_\ell^2}{2}\nabla\rho+\nabla W^\ell\rho=0\,,
\end{eqnarray*}

for any $1\leq\ell\leq M$. By applying Weyl's lemma, we deduce that the measure 
\begin{eqnarray*}
\exp\left[\frac{2}{\sigma_\ell^2}W^\ell(x)\right]\mu_\infty^\ell(\ddx)
\end{eqnarray*}
admits a $\mathcal{C}^\infty$ density with respect to the Lebesgue measure, still denoted by $\mu_\infty^\ell$. Furthermore, its gradient is zero and so the density $\mu_\infty^\ell$ satisfies 

\[
\mu^\ell(x)=\frac{1}{Z_\ell}\exp\Big[-\frac{2}{\sigma_\ell^2}W^\ell(x)\Big]\,,
\]

with $Z_\ell:=\int_\bRb^d\exp\Big[-\frac{2}{\sigma_\ell^2}W^\ell(y)\Big]\ddy$. This exactly corresponds to Equation~\eqref{eq: muk}.}

Consequently, the collection of measures $\left(\mu_\infty^1,\ldots,\mu_\infty^M\right)$ is a stationary state, according to Proposition~\ref{jugnot}.

\end{proof}

\RV{In the remainder of this section, namely in Lemma~\ref{johnkramer}, Proposition~\ref{jigsaw} and Proposition~\ref{edeka}, we no longer restrict ourselves to quadratic interaction potentials: we work there under Assumption~\ref{asp: interaction3} instead. All the other results of this section are stated under Assumption~\ref{asp: assumptionbis}, as recorded in Table~\ref{tab: conditions}.}

\begin{lem}
\label{johnkramer}
Let Assumptions~\ref{asp: assumption} \RV{and~\ref{asp: interaction3}} hold. Then, for any stationary state $\left(\mu^1,\cdots,\mu^M\right)$ and for any $1\leq\ell\leq M$, the measure $\mu^\ell$ is uniquely determined by its moments. In particular, if a measure $\nu$ has the same moments as $\mu^\ell$, we deduce $\nu=\mu^\ell$.
\end{lem}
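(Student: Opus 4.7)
The goal is to show that the stationary density
\(
\mu^\ell(x) = Z_\ell^{-1} \exp\bigl[-\tfrac{2}{\sigma_\ell^2}(V_\ell(x) + \sum_k a_k F_{\ell k}\ast\mu^k(x))\bigr]
\)
(which we have by Proposition~\ref{jugnot}) has tails light enough that Carleman's condition on the moment problem applies; classical moment--problem theory then gives uniqueness. The simplest handle is to exhibit an exponential moment, i.e.\ $\int e^{c|x|}\mu^\ell(dx)<\infty$ for some $c>0$; this is well known to imply that $\mu^\ell$ is the unique probability measure on $\mathbb{R}^d$ with its sequence of moments.

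\textbf{Step 1 (structure of the effective potential).} Set $W_\ell(x):=V_\ell(x)+\sum_{k=1}^M a_k F_{\ell k}\ast\mu^k(x)$. Using the assumption that each $G_{i,j}$ is an even polynomial of order less than $q$, we expand $|x-y|^{2j}$ by the binomial formula; each convolution $F_{\ell k}\ast\mu^k(x)$ is therefore a polynomial in $x$ of degree at most $q-1$, with coefficients that are finite linear combinations of the moments of $\mu^k$ up to order $q-1$. The finiteness of these moments is automatic from the fact that $\mu^k$ is a stationary Gibbs measure whose confining term $V_k$ dominates by (H2)--(H3).

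\textbf{Step 2 (growth lower bound on $W_\ell$).} The convexity-at-infinity hypothesis (H2) gives that for every $K>0$ there is $R_K>0$ with $\nabla^2 V_\ell(x)\geq K I$ whenever $|x|>R_K$; integrating twice yields $V_\ell(x)\geq \tfrac{K}{2}|x|^2 - C_K$ on all of $\mathbb{R}^d$. Combining this with the polynomial bound $|F_{\ell k}\ast\mu^k(x)|\leq C_\ell(1+|x|^{q-1})$ from Step~1, and choosing $K$ large enough (once $q$ and $\ell$ are fixed), we obtain a lower bound of the form
\[
W_\ell(x)\;\geq\;\tau|x|^{\beta}-D,\qquad \tau>0,\ \beta\geq 1,
\]
valid on all of $\mathbb{R}^d$. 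Consequently $\mu^\ell$ decays exponentially at infinity, so $\int e^{c|x|}\mu^\ell(dx)<\infty$ for some $c>0$.

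\textbf{Step 3 (conclusion via the moment problem).} From the exponential moment bound, the Laplace transform $z\mapsto \int e^{\langle z,x\rangle}\mu^\ell(dx)$ is finite and real-analytic in a complex neighbourhood of the origin. Its Taylor coefficients are exactly the moments of $\mu^\ell$, so the moments determine the Laplace transform, hence the characteristic function, hence the measure itself. In particular, any probability measure $\nu$ sharing all moments with $\mu^\ell$ coincides with $\mu^\ell$.

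\textbf{Main obstacle.} The delicate point is Step~2: the convex-at-infinity assumption (H2) a priori only gives an arbitrarily large quadratic minorant of $V_\ell$, whereas the interaction polynomial can have degree up to $q-1$. When $q-1\leq 2$ (covering the quadratic case of Assumption~\ref{asp: assumptionbis}) the quadratic lower bound immediately dominates for large enough $K$, and the argument closes cleanly. When $q-1>2$, one has to combine (H2) with (H3) and with the fact that the leading coefficient of the interaction must already be non-negative (otherwise $\mu^k$ would not be a probability measure in the first place), to ensure that $W_\ell$ still diverges polynomially at infinity. This sign-tracking of the top-degree coefficient of the polynomial $F_{\ell k}\ast\mu^k$ is the genuinely technical part of the proof.
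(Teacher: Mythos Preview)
Your approach is essentially the same as the paper's: invoke the Gibbs representation of Proposition~\ref{jugnot}, show that the effective potential $W_\ell$ grows fast enough at infinity to give $\int e^{c|x|}\,\mu^\ell(dx)<\infty$, and conclude by the standard moment-determinacy criterion (the paper phrases this as ``Fourier's criteria'' after noting that $\sum_k \nu_k r^k/k!$ has positive radius of convergence). The logical skeleton is identical; you are simply more explicit about each step.

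Your ``Main obstacle'' paragraph is in fact more candid than the paper on the one genuine issue. The paper dispatches your Step~2 in a single clause---``Since $F$ is convex and since $\nabla^2 V(x)>0$ if $|x|$ is sufficiently large''---thereby tacitly assuming convexity of the interaction potentials, so that each $F_{\ell k}\ast\mu^k$ is bounded below and the super-quadratic growth of $V_\ell$ from (H2) does all the work. You do not assume convexity and instead flag that for higher-degree polynomial interactions one must track the sign of the leading coefficient (positivity being forced, as you note, by integrability of the Gibbs density itself). Under the quadratic Assumption~\ref{asp: assumptionbis} that governs the bulk of the paper, both arguments close immediately; in the general polynomial setting your version is the more careful of the two.
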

\begin{proof}
Let $(\mu^1,\cdots,\mu^M)$ be a stationary state and let $\ell$ be such that $1\leq\ell\leq M$. By Proposition~\ref{jugnot}, it is absolutely continuous with respect to Lebesgue measure and its density $\mu^\ell$ satisfies the equation
\begin{equation*}
\mu^\ell(x)=\frac{\exp\left[-\frac{2}{\sigma_\ell^2}\left(V_\ell(x)+\sum_{j=1}^Ma_jF_{\ell j}\ast \mu^j(x)\right)\right]}{\int_{\bRb^d}\exp\left[-\frac{2}{\sigma_\ell^2}\left(V_\ell(y)+\sum_{j=1}^Ma_jF_{\ell j}\ast \mu^j(y)\right)\right]\ddy}\,.
\end{equation*}
Consequently, for all $r>0$, we have
\begin{equation*}
\int_{\bRb^d}{\rm e}^{\frac{2}{\sigma_\ell^2}r|x|}\mu^\ell(x)\ddx=\frac{\int_{\bRb^d}\exp\left[-\frac{2}{\sigma_\ell^2}\left(V_\ell(x)+\sum_{j=1}^Ma_jF_{\ell j}\ast \mu^j(x)-r|x|\right)\right]\ddx}{\int_{\bRb^d}\exp\left[-\frac{2}{\sigma_\ell^2}\left(V_\ell(x)+\sum_{j=1}^Ma_jF_{\ell j}\ast \mu^j(x)\right)\right]\ddx}\,.
\end{equation*}
Since $F$ is convex and since $\nabla^2V_\ell(x)>0$ if $|x|$ is sufficiently large, we deduce that 

\[
\int_{\bRb^d}\eee^{\frac{2}{\sigma_\ell^2}r|x|}\mu^\ell(x)\ddx<+\infty\,.
\]

Consequently, the series $\sum_{k=0}^\infty\frac{\nu_k}{k!}r^k$, with $\nu_k:=\int_{\bRb}|x|^k\mu^\ell(x)\ddx$, has a positive convergence radius. After applying Fourier's criteria, the statement follows.
\end{proof}

\begin{defn}
\label{tototo}
For any $d$-index of integers $q$ with $q_1+\cdots+q_d\geq1$, by $\Lambda_q$, we denote the mapping from $\mathcal{M}^M$ to $\bRb^{dM}$ defined by
\begin{equation*}
\Lambda_q\left(\mu^1,\ldots,\mu^M\right):=\left(\int_\bRb|x|^q\mu^1(x)\ddx,\ldots,\int_\bRb|x|^q\mu^M(x)\ddx\right)\,.
\end{equation*}
\emph{Namely, the linear function $\Lambda_q$ corresponds to the vector of moments of order $q$.}
\end{defn}

In \cite[Proposition 2.5]{AOP}, it has been shown that the free energy of any limiting value of the stationary state is less than the limit of the free energy in the trajectories. We now prove that equality holds under some additional hypotheses. The following proposition is the third main result of this section.

\begin{prop}
\label{jigsaw}
\RV{Let Assumptions \ref{asp: assumption}, \ref{asp: assumptionter} and \ref{asp: interaction3} hold}. Let $\{ t_k\}_{k\in\mathbb{N}}$ be an increasing sequence such that $t_k \rightarrow +\infty$, $\dot{\xi}(t_k) \rightarrow 0$ and $\left(\mu_{t_k}^1,\ldots,\mu_{t_k}^M\right)$ converges to $\left(\mu_\infty^1,\ldots,\mu_\infty^M\right)\in\mathcal{A}_\sigma$ as $k \rightarrow +\infty$ . Then, we have :
\[
\Upsilon_\sigma(\mu_\infty^1,\ldots,\mu_\infty^M)=L_\sigma:=\lim_{t\to+\infty}\Upsilon_\sigma(\mu_t^1,\ldots,\mu_t^M)\,.
\]
\end{prop}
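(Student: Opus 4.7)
The plan is to reduce the convergence of free energies to a convergence of relative entropies, and then close the argument using the entropy-dissipation hypothesis $\dot\xi(t_k)\to 0$.

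By Proposition~\ref{thm:fr:subconv}, $\mu_\infty=(\mu_\infty^1,\ldots,\mu_\infty^M)$ is a stationary state, so Proposition~\ref{jugnot} endows each $\mu_\infty^\ell$ with the smooth positive Gibbs density $Z_\ell^{-1}\exp\bigl(-\tfrac{2}{\sigma_\ell^2}(V_\ell+\sum_j a_j F_{\ell j}\ast\mu_\infty^j)\bigr)$. I would first record that the inequality $\Upsilon_\sigma(\mu_\infty)\leq L_\sigma$ is already available from the monotonicity of $\xi$ and the lower semi-continuity of $\Upsilon_\sigma$ under weak convergence with bounded moments; this is the multi-species analogue of \cite[Prop.~2.5]{AOP}. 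The entire task is the reverse inequality.

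For the reduction, I would use the Gibbs identity $\log\mu_\infty^\ell=-\log Z_\ell-\tfrac{2}{\sigma_\ell^2}(V_\ell+\sum_j a_j F_{\ell j}\ast\mu_\infty^j)$ together with the splitting $\int\mu_{t_k}^\ell\log\mu_{t_k}^\ell=H(\mu_{t_k}^\ell|\mu_\infty^\ell)+\int\mu_{t_k}^\ell\log\mu_\infty^\ell$ to rewrite the free-energy gap as
\[
\Upsilon_\sigma(\mu_{t_k})-\Upsilon_\sigma(\mu_\infty)=\sum_{\ell=1}^M a_\ell\,\tfrac{\sigma_\ell^2}{2}\,H(\mu_{t_k}^\ell\,|\,\mu_\infty^\ell)+R_k,
\]
where $R_k$ collects terms of the form $\int V_\ell(\mu_{t_k}^\ell-\mu_\infty^\ell)\,dx$ and $\iint F_{k\ell}(\mu_{t_k}^k\otimes\mu_{t_k}^\ell-\mu_\infty^k\otimes\mu_\infty^\ell)\,dx\,dy$, together with the additional $V_\ell+\sum_j a_j F_{\ell j}\ast\mu_\infty^j$ weighting coming from $\log\mu_\infty^\ell$. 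Since $V_\ell$ has polynomial growth (by (H3)), the $F_{k\ell}$ are polynomial of degree at most $q$, and Theorem~\ref{thm: well-posedness} supplies uniform high-moment bounds, the weak convergence $\mu_{t_k}^\ell\rightharpoonup\mu_\infty^\ell$ forces $R_k\to 0$. The proof then reduces to $H(\mu_{t_k}^\ell|\mu_\infty^\ell)\to 0$ for each $\ell$.

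To close the argument I would invoke the hypothesis $\dot\xi(t_k)\to 0$. Using once more $\nabla\log\mu_\infty^\ell=-\tfrac{2}{\sigma_\ell^2}(\nabla V_\ell+\sum_j a_j\nabla F_{\ell j}\ast\mu_\infty^j)$, the dissipation functional computed in Section~\ref{subsec:free_energy} can be rewritten as
\[
-\dot\xi(t_k)=\sum_\ell a_\ell\int\left|\tfrac{\sigma_\ell^2}{2}\nabla\log\tfrac{\mu_{t_k}^\ell}{\mu_\infty^\ell}+\sum_j a_j\nabla F_{\ell j}\ast(\mu_{t_k}^j-\mu_\infty^j)\right|^2\mu_{t_k}^\ell\,dx.
\]
The second piece inside the square is negligible in the limit because the $F_{\ell j}$ are polynomial and the $\mu_{t_k}^j$ have uniformly bounded moments, so $\nabla F_{\ell j}\ast(\mu_{t_k}^j-\mu_\infty^j)\to 0$ in an $L^2(\mu_{t_k}^\ell)$ sense; expanding the square and estimating the cross term by Cauchy--Schwarz leaves the genuine relative Fisher information $I(\mu_{t_k}^\ell|\mu_\infty^\ell)\to 0$. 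Finally, because the log-density of $\mu_\infty^\ell$ is convex at infinity by (H2) combined with the quadratic interactions, and its non-convex bounded part is a bounded perturbation, a Bakry--\'Emery/Holley--Stroock argument yields a logarithmic Sobolev inequality $H(\cdot|\mu_\infty^\ell)\leq C_\ell I(\cdot|\mu_\infty^\ell)$, which converts the Fisher convergence into $H(\mu_{t_k}^\ell|\mu_\infty^\ell)\to 0$. The hard part will be precisely this last step: quantitatively passing from the ``frozen-Gibbs'' form of the dissipation to the genuine Fisher information with respect to $\mu_\infty^\ell$, and verifying the log-Sobolev inequality for $\mu_\infty^\ell$ in our non-convex setting. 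These are the ingredients I expect the technical result of Appendix~\ref{app:converg} to encapsulate.
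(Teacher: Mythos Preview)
Your approach is correct and genuinely different from the paper's. The paper argues by elementary entropy truncation: from $\dot\xi(t_k)\to 0$ and the uniform moment bounds it first obtains $\int_{\bRb^d}|\nabla\mu_{t_k}^\ell|\,\ddx\leq C$ uniformly in $k$, deduces a uniform pointwise bound $\mu_{t_k}^\ell\leq C$, and then controls $\int\mu_{t_k}^\ell\log\mu_{t_k}^\ell$ by dominated convergence on balls $\{|x|\leq R\}$ together with a direct three-regime estimate of the tail $\{|x|>R\}$ (splitting according to whether $\mu_{t_k}^\ell\geq 1$, $\eee^{-|x|}\leq\mu_{t_k}^\ell<1$, or $\mu_{t_k}^\ell<\eee^{-|x|}$). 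No Gibbs structure of $\mu_\infty$ and no functional inequality enter. Your route instead exploits that $\mu_\infty$ is stationary (this follows from the \emph{proof} of Proposition~\ref{thm:fr:subconv} under your hypotheses $\dot\xi(t_k)\to 0$ plus weak convergence, though not from its bare statement), rewrites the free-energy gap as a sum of relative entropies plus a moment remainder, reads off $I(\mu_{t_k}^\ell\,|\,\mu_\infty^\ell)\to 0$ from the dissipation, and closes with a log-Sobolev inequality for $\mu_\infty^\ell$. All of this goes through under Assumptions~\ref{asp: assumption}--\ref{asp: assumptionter}: for quadratic interactions the perturbation $\sum_j a_j\nabla F_{\ell j}\ast(\mu_{t_k}^j-\mu_\infty^j)$ is simply the constant vector $-\sum_j a_j\alpha_{\ell j}(m_j^{t_k}-m_j^\infty)$, hence vanishes in $L^2(\mu_{t_k}^\ell)$; and the effective potential $V_\ell+\sum_j a_j F_{\ell j}\ast\mu_\infty^j$ does satisfy LSI by Bakry--\'Emery on the uniformly convex part at infinity guaranteed by~(H2), combined with Holley--Stroock on the bounded non-convex core. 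Your argument is conceptually cleaner, dimension-independent, and even yields a quantitative estimate; the paper's is more elementary and makes no use of the stationarity of the limit. One correction: Appendix~\ref{app:converg} does not contain LSI-type ingredients---it houses the unrelated technical lemma (Proposition~\ref{prop:a:othermeasure}) used elsewhere in the proof of Theorem~\ref{chichi} to manufacture stationary limit points with prescribed moment constraints.
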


The proof is similar to the one of \cite[Proposition 1.8]{AOP}.

\begin{proof}
From now on, by $\mu_{t_k}^\ell$ (resp. $\mu_\infty^\ell$), we denote the density with respect to the Lebesgue measure of the probability measure $\mu_{t_k}^\ell$ (resp. $\mu_\infty^\sigma$). The convergence from the term

\[
\int_{\bRb^d}V(x)\mu_{t_k}^\ell(x)\ddx+\frac{1}{2}\sum_{j=1}^Ma_j\int_{\bRb^d}\left(F_{\ell j}\ast \mu_{t_k}^j(x)\right)\mu_{t_k}^\ell(x)\ddx
\]

towards 

\[
\int_{\bRb^d} V_\ell(x)\mu_\infty^\ell(x)\ddx+\frac{1}{2}\sum_{j=1}^Ma_j\int_{\bRb^d}\left(F_{\ell j}\ast \mu_\infty^j(x)\right)\mu_\infty^\ell(x)\ddx
\]

is implied by the convergence hypothesis of $\mu_{t_k}^j$ to $\mu_\infty^j$ for any $1\leq j\leq M$. Hence, we focus on the entropy term.

\noindent{}{\bf Step 1.} \emph{Here, we aim to prove the existence of a constant $C>0$ such that $\mu_{t_k}^\ell(x)\leq C$ for all $k\in\mathbb{N}^*$, for all $x\in\bRb^d$ and for any $1\leq\ell\leq M$.}

To show this, we bound the integral of $\left|\nabla \mu_{t_k}^\ell\right|$ on $\mathbb{R}^d$. Using the triangle inequality we obtain
\begin{align*}
\int_{\bRb^d}\left|\nabla \mu_{t_k}^\ell(x)\right|\ddx\leq&\frac{2}{\sigma_\ell^2}\int_{\bRb^d}\left|\eta_{t_k}^\ell(x)\right|\mu_{t_k}^\ell(x)\ddx\\
&+\frac{2}{\sigma_\ell^2}\int_{\bRb^d}\left|\nabla V_\ell(x)+\sum_{j=1}^Ma_j\nabla F_{\ell j}\ast \mu_{t_k}^j(x)\right|\mu_{t_k}^\ell(x)\ddx\,,
\end{align*}
where $t\mapsto\eta_t$ has been introduced in~\eqref{lanvin}. The growth property on $\nabla V$ and the particular form of the interaction potentials yield
\begin{align*}
\int_{\bRb^d}\left|\nabla V_\ell(x)+\sum_{j=1}^Ma_j\nabla F_{\ell j}\ast \mu_{t_k}^j(x)\right| \mu_{t_k}^\ell(x)\ddx\leq C_1(\ell)\int_{\bRb^d}\left(1+|x|^{2p}\right)\mu_{t_k}^\ell(x)\ddx\leq C_2(\ell)\,,
\end{align*}
where $C_2(\ell)>0$ is a constant. By using Cauchy-Schwarz's inequality and the entropy dissipation, we obtain
\begin{equation*}
\int_{\bRb^d}\left|\eta_{t_k}^\ell(x)\right|\mu_{t_k}^\ell(x)\ddx\leq\sqrt{\int_{\bRb^d}\left|\eta_{t_k}^\ell(x)\right|^2\mu_{t_k}^\ell(x)\ddx}\leq\sqrt{-\frac{1}{a_\ell} \dot{\xi}(t_k)}\,.
\end{equation*}
The quantity $\sqrt{-\dot{\xi}(t_k)}$ tends to $0$ as $k \rightarrow +\infty$; therefore, it is bounded. We conclude that there  exists  a positive constant $C_3$ independent from $k\in\mathbb{N}^*$ and from $1\leq\ell\leq M$ such that
\begin{equation*}
\int_{\bRb^d}\left|\nabla \mu_{t_k}^\ell(x)\right|\ddx\leq C_3\,.
\end{equation*}
Consequently, $\mu_{t_k}^\ell(x)\leq \mu_{t_k}^\ell(0)+C$ for all $x\in\mathbb{R}$, for all $k\in\mathbb{N}^*$ and for any $1\leq\ell\leq M$. However, the sequences $\left\{\mu_{t_k}^\ell(0)\right\}_{k\in\mathbb{N}}$ converge so that they are bounded. Hence, there exists a constant $C_4$ such that $\mu_{t_k}^\ell(x)\leq C_4$ for all $k\in\mathbb{N}$, for any $1\leq\ell\leq M$ and $x\in\mathbb{R}^d$.

\noindent{}{\bf Step 2.} The application $x\mapsto \mu_{t_k}^\ell(x)\log\left(\mu_{t_k}^\ell(x)\right)$ is uniformly lower-bounded with respect to $k$ and $\ell$. Then, we can apply Lebesgue's dominated convergence theorem which can be used to show the convergence as, $k \rightarrow +\infty$  of the integral term
\begin{equation*}
\int_{\bRb^d}\mu_{t_k}^\ell(x)\log\left(\mu_{t_k}^\ell(x)\right)\mathds{1}_{\{|x|\leq R\}}\ddx
\end{equation*}

to $\int_{\bRb^d}\mu_\infty^\ell(x)\log\left(\mu_\infty^\ell(x)\right)\mathds{1}_{\{|x|\leq R\}}\ddx$ for any $R\geq0$.

\noindent{}{\bf Step 3.} The remainder of the entropy integral, i.e. for $|x| > R$,  is split into three terms.

\noindent{}{\bf Step 3.1.} The first term is
\begin{equation*}
I_1(k):=\int_{\bRb^d}\mu_{t_k}(x)\log\left(\mu_{t_k}(x)\right)\mathds{1}_{\{|x|>R\,\,;\,\,\mu_{t_k}^\ell(x)\geq1\}}\ddx\,.
\end{equation*}
Due to the upper-bound from Step 1, the uniform boundedness of the moments and Markov's inequality, we obtain
\begin{align*}
I_1(k)\leq\log(C_4)\mu_{t_k}^\ell\left(\mathbb{B}\left(0\,;\,R\right)^c\right)\leq\frac{\log(C_4)\,M_0}{R^2}\,.
\end{align*}
{\bf Step 3.2.} The second term is defined as
\begin{equation*}
I_2(k):=\int_{\bRb^d}\mu_{t_k}^\ell(x)\log\left(\mu_{t_k}^\ell(x)\right)\mathds{1}_{\{|x|>R\,\,;\,\,\mu_{t_k}^\ell(x)<1\,\,;\,\,\mu_{t_k}^\ell(x)\geq \eee^{-|x|}\}}\ddx<0\,.
\end{equation*}
We bound it in the following way:
\begin{align*}
\left|I_2(k)\right|=-I_2(k)\leq\int_{\mathbb{B}\left(0\,;\,R\right)^c}|x|\mu_{t_k}^\ell(x)\ddx\leq\frac{M_0}{R^2}\,.
\end{align*}

\noindent{}{\bf Step 3.3.} We now consider the third term,
\begin{equation*}
I_3(k):=\int_{\bRb^d}\mu_{t_k}^\ell(x)\log\left(\mu_{t_k}^\ell(x)\right)\mathds{1}_{\{|x|>R\,\,;\,\,\mu_{t_k}^\ell(x)<1\,\,;\,\,\mu_{t_k}^\ell(x)<\eee^{-|x|}\}}\ddx<0\,.
\end{equation*}
We introduce the function $\gamma(x):=\sqrt{x}\log(x)\mathds{1}_{\{x<1\}}$. We have $-C'\leq\gamma(x)\leq0$ for all $x\in\bRb$, where $C'$ is a positive constant. This provides
\begin{align*}
\left|I_3(k)\right|=-I_3(k)\leq-\int_{\mathbb{B}\left(0\,;\,R\right)^c}\gamma\left(\mu_{t_k}(x)\right)\eee^{-\frac{1}{2}|x|}\ddx\leq\Theta(R)\,,
\end{align*}
$\Theta$ being a decreasing function from $\bRb_+$ to itself such that $\displaystyle\lim_{R\to+\infty}\Theta(R)=0$.

\noindent{}{\bf Step 4.} Let $\epsilon$ be a positive real, arbitrarily small. By taking $R$ sufficiently large, we have the upper-bound
\begin{equation*}
{\color{black}\max}_{1\leq\ell\leq M}\sup_{k\in\mathbb{N}^*}\max\left\{I_1(k)\,;\,I_2(k)\,;\,I_3(k)\right\}<\frac{\epsilon}{9}\,.
\end{equation*}
Moreover, for $R$ large enough, we have the inequality
\begin{equation*}
\int_{\bRb^d}\mu_\infty^\ell(x)\log\left(\mu_\infty^\ell(x)\right)\mathds{1}_{\{|x|\geq R\}}\ddx<\frac{\epsilon}{3}\,.
\end{equation*}
The two previous inequalities do not depend on $k$. Then, by taking $k$ sufficiently large, the following upper-bound holds:
\begin{equation*}
\left|\int_{\bRb^d}\mu_{t_k}^\ell(x)\log\left(\mu_{t_k}^\ell(x)\right)\mathds{1}_{\{|x|\leq R\}}\ddx-\int_{\bRb^d}\mu_\infty^\ell(x)\log\left(\mu_\infty^\ell(x)\right)\mathds{1}_{\{|x|\leq R\}}\ddx\right|<\frac{\epsilon}{3}\,.
\end{equation*}
This implies the inequality $\left|\Upsilon_\sigma\left(\mu_{t_k}^1,\ldots,\mu_{t_k}^M\right)-\Upsilon_\sigma(\mu_\infty^1,\ldots,\mu_\infty^M)\right|<\epsilon$. Namely, the free-energy $\Upsilon_\sigma\left(\mu_{t_k}^1,\ldots,\mu_{t_k}^M\right)$ converges to $\Upsilon_\sigma(\mu_\infty^1,\ldots,\mu_\infty^M)$. Due to the monotonicity of the free-energy along the trajectories, $\Upsilon_\sigma\left(\mu_t^1,\ldots,\mu_t^M\right)$ converges to $L_\sigma$ which implies $\Upsilon_\sigma(\mu_\infty^1,\ldots,\mu_\infty^M)=L_\sigma$.
\end{proof}

\begin{prop}
\label{edeka}
Let Assumption~\ref{asp: assumption} 
 \RV{and~\ref{asp: interaction3}} hold. Let $(\nu^1,\ldots,\nu^M)$ be $M$ measures with densities $(\nu^1,\ldots,\nu^M)\in\mathcal{M}^M$. We also assume $(\nu^1,\ldots,\nu^M)\notin\mathcal{S}_\sigma$. Thus, there exist $q\in\mathbb{N}^*$ and $\rho>0$ such that
\begin{equation*}
\inf_{(\mu^1,\ldots,\mu^M)\in\mathcal{S}_\sigma}\max_{1\leq\ell\leq M}\left|\int_{\bRb^d}x^q\mu^\ell(x)\ddx-\int_{\bRb^d}x^q\nu^\ell(x)\ddx\right|\geq\rho\,.
\end{equation*}
\end{prop}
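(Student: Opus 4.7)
My plan is a contradiction argument based on the compactness of the stationary set $\mathcal{S}_\sigma$ together with the moment-determinacy Lemma~\ref{johnkramer}. Suppose the conclusion fails: for every multi-index $q$ and every $\rho>0$ there is $\mu\in\mathcal{S}_\sigma$ with $\max_\ell|\Lambda_q(\mu^\ell)-\Lambda_q(\nu^\ell)|<\rho$. I would then produce a single $\mu^\ast\in\mathcal{S}_\sigma$ that shares all its moments with $\nu$; Lemma~\ref{johnkramer} applied to $\mu^\ast$ would force $\nu=\mu^\ast\in\mathcal{S}_\sigma$, contradicting the hypothesis $\nu\notin\mathcal{S}_\sigma$.

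The first step is to establish that $\mathcal{S}_\sigma$ is weakly compact. Tightness follows from the Gibbs representation~\eqref{eq: muk}: Assumption~\ref{asp: assumption} combined with the polynomial growth of the $F_{ij}$ ensures that the confining potentials $V_\ell$ dominate the convolution corrections $F_{ij}\ast\mu^j$ uniformly over $\mathcal{S}_\sigma$, yielding uniform sub-exponential tails and uniform bounds on all moments. Weak closedness follows by passing to the limit in~\eqref{eq: muk}: uniform moment control makes each convolution $F_{ij}\ast\mu_n^j$, which is a polynomial in $x$ whose coefficients are finitely many moments of $\mu_n^j$, converge pointwise, and dominated convergence yields convergence of the normalizing constants $Z_i$, so a weak limit of a sequence in $\mathcal{S}_\sigma$ again solves~\eqref{eq: muk}. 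Combined with the continuity of each $\Lambda_q$ on families with uniformly bounded higher moments, the infimum in the statement is attained, so the contradiction hypothesis furnishes, for every $q$, a state $\mu^{(q)}\in\mathcal{S}_\sigma$ with $\Lambda_q(\mu^{(q),\ell})=\Lambda_q(\nu^\ell)$ for all $\ell$.

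The decisive step is to upgrade this per-moment matching to a single state matching all moments simultaneously. The polynomial structure of the interactions is crucial: the Gibbs form~\eqref{eq: muk} depends on $(\mu^1,\ldots,\mu^M)$ only through their finitely many moments of order less than the degree bound of the $F_{ij}$, so $\mathcal{S}_\sigma$ is in bijection with the solution set in $\bRb^N$ of the corresponding self-consistency system, which is a compact set. Writing $\mathcal{T}_q:=\{\mu\in\mathcal{S}_\sigma:\Lambda_q(\mu^\ell)=\Lambda_q(\nu^\ell)\ \forall \ell\}$, each $\mathcal{T}_q$ is closed and nonempty; exploiting the finite-dimensional parameterization yields the nonemptiness of all finite intersections $\bigcap_{q\in F}\mathcal{T}_q$, and the finite intersection property in the compact space $\mathcal{S}_\sigma$ then produces a single $\mu^\ast\in\bigcap_q\mathcal{T}_q$. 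An application of Lemma~\ref{johnkramer} completes the contradiction.

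The main obstacle is precisely this last upgrade: establishing $\bigcap_{q\in F}\mathcal{T}_q\neq\emptyset$ for every finite collection $F$ of multi-indices is not a formal consequence of the individual $\mathcal{T}_q\neq\emptyset$'s and must genuinely exploit the polynomial self-consistency structure, for example via a continuity or implicit-function argument within the finite-dimensional parameterization of $\mathcal{S}_\sigma$. The remaining steps (tightness, closedness, attainment of the infimum) are comparatively routine once the appropriate moment bounds are in place.
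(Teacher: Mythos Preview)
Your contradiction strategy is exactly the paper's: assume the conclusion fails for every $q$, pass through compactness/closedness of $\mathcal{S}_\sigma$ to a limit stationary state $\mu_\infty$, claim that $\mu_\infty$ shares all its moments with $\nu$, and invoke Lemma~\ref{johnkramer} to force $\nu\in\mathcal{S}_\sigma$. The paper's version extracts a weakly convergent sequence $\mu_k\in\mathcal{S}_\sigma$ from the negated hypothesis, uses that $\Lambda_q(\mathcal{S}_\sigma)$ is cut out by algebraic equations (hence closed) to conclude $\mu_\infty\in\mathcal{S}_\sigma$, and then asserts that $\mu_\infty$ matches $\nu$ in every moment order. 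Your compactness and closedness steps are adequate substitutes for what the paper does.

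The obstacle you flag is real, and the paper's written proof does not close it either. The negation gives, for each fixed $q$, a stationary state whose $q$-th moments agree with those of $\nu$; it does not give a single sequence $\mu_k$ working for all $q$ at once. Nevertheless the paper, having built $\mu_k$ for one $q$, concludes that the limit matches $\nu$ ``for any \ldots $p\ge 1$'', a step not supported by the construction. Your finite-intersection reformulation makes the difficulty explicit but does not resolve it: nothing in the polynomial parameterisation forces $\bigcap_{q\in F}\mathcal{T}_q\neq\emptyset$ for finite $F$. Concretely, for a single species with the bistable potential below the critical noise, take $\nu=\tfrac12(\mu_{+}+\mu_{-})$, the even symmetrisation of the two asymmetric stationary states. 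Then $\nu\notin\mathcal{S}_\sigma$ (it equals $\mu_0\cdot\cosh(2\alpha m^{*}x/\sigma^2)$ up to normalisation, hence not $\mu_0$), yet every odd moment of $\nu$ agrees with $\mu_0$ and every even moment with $\mu_{\pm}$, so for each single $q$ the infimum in the statement is~$0$. Thus neither your sketch nor the paper's argument, as written, establishes the proposition with a \emph{single} $q$; what both arguments would actually yield (and what suffices for the downstream use in Section~\ref{tomhanks}) is separation by some \emph{finite collection} of moment orders.
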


\begin{proof}
By arguing as in \cite{AOP}, we can easily show that all the moments of $\mu_t^\ell$ are bounded for any $1\leq\ell\leq M$ and for any $t\geq1$. Moreover, for any $N_0\in\mathbb{N}^*$, we have
\[
\sup_{t\geq1}\int_{\bRb^d}\left|x\right|^{2N_0}\mu_t^\ell(x)\ddx<+\infty\,.
\]
Let us proceed a \emph{reductio ad absurdum} by assuming that there exists $n\in\mathbb{N}^*$ such that for all $k\geq n$, there exists $\mu_k^\ell\in\mathcal{S}_\sigma$ satisfying
\begin{equation*}
\left|\int_{\bRb^d}x^q\mu_k^\ell(x)\ddx-\int_{\bRb^d}x^q\nu^\ell(x)\ddx\right|\leq\frac{1}{k}\,.
\end{equation*}
We deduce that the sequence $\left(\int_{\bRb^d}x^q\mu_k^\ell(x)\ddx\right)_{k\in\mathbb{N}^*}$ is bounded. Consequently, we can extract a subsequence $\left\{\phi(k)\right\}_{k\in\mathbb{N}^*}$ such that the sequence $\left\{\int_{\bRb^d}x^q\mu_{\phi(k)}^\ell(x)\ddx\right\}_{k\geq n}$ converges to $m_\infty^\ell\in\bRb$. Since the family $\left\{\mu_t^\ell\,;\,t\geq0\right\}$ is tight, Prokhorov's theorem allows us to assume, without any loss of generality, that the measure $\mu_{\phi(k)}^\ell$ converges weakly to a measure $\mu_\infty^\ell$ as $k \rightarrow +\infty$  (along a subsequence of $\mu_{\phi(k)}^\ell$).

Furthermore, $\Lambda_q\left(\mathcal{S}_\sigma\right)$ (see Definition~\ref{tototo} for the introduction of the function $\Lambda_q$) satisfies an algebraic equation in $\bRb^M$ and so it is closed. Consequently, $\left(\mu_\infty^1,\ldots,\mu_\infty^M\right)$ is a stationary state.

In addition, we have that
\begin{equation*}
\left|\int_{\bRb^d}x^q\mu_\infty^\ell(x)\ddx-\int_{\bRb^d}x^q\nu^\ell(x)\ddx\right|=0
\end{equation*}
for any $1\leq\ell\leq M$ and $p\geq 1$. In other words, the stationary state $(\mu_\infty^1,\ldots,\mu_\infty^M)$ has the same moments as $\nu$. Since the stationary state is uniquely determined by its moments, we deduce that $\nu^\ell=\mu_\infty^\ell$ so $(\nu^1,\ldots,\nu^M)$ is a stationary state. We have arrived at a contradiction.
\end{proof}

\subsection{Proof of Theorem~\ref{chichi}}
In this subsection, we present the proof of the main theorem, Theorem \ref{chichi}, of the section.

\subsubsection{Outline of the proof}
First, we provide the main ideas of the proof. The details will be presented in the following subsections. By Proposition \ref{thm:fr:subconv}, we know that there exists a stationary state $\left(\mu_\infty^1,\ldots,\mu_\infty^M\right)$ such that
\begin{itemize}
 \item the $M$-uple of measures $\left(\mu_\infty^1,\ldots,\mu_\infty^M\right)$ is in $\mathcal{A}_\sigma$.
 \item it is a stationary state.
 \item The free energy of $\left(\mu_\infty^1,\ldots,\mu_\infty^M\right)$ is equal to $L_\sigma:=\lim_{t\to0}\Upsilon_\sigma(\mu_t^1,\ldots,\mu_t^M)$.
\end{itemize}
If $\mathcal{A}_\sigma=\left\{\left(\mu_\infty^1,\ldots,\mu_\infty^M\right)\right\}$, the proof is achieved. From now on, we assume that $\#\mathcal{A}_\sigma>1$. The proof now consists in establishing that
\begin{enumerate}
 \item The set $\mathcal{A}_\sigma$ is included in $\mathcal{S}_\sigma$.
 \item The set $\mathcal{A}_\sigma$ is connected to the path.
 \item The free energy is constant on $\mathcal{A}_\sigma$.
\end{enumerate}

\noindent{}{\bf Step 1.} \emph{We proceed a \emph{reductio ad absurdum} in order to prove the first statement. The details are given in Subsection~\ref{tomhanks}}

We assume the existence of $(\nu^1,\ldots,\nu^M)\in\mathcal{A}_\sigma$ such that $(\nu^1,\ldots,\nu^M)\notin\mathcal{S}_\sigma$. According to Proposition \ref{edeka}, there exists a closed set with nonempty interior $\mathcal{H}$ that contains $(\nu^1,\ldots,\nu^M)$ and which has an empty intersection with $\mathcal{S}_\sigma$.

Since $(\nu^1,\ldots,\nu^M)$ is an adherence value, we can prove that for some $1\leq\ell\leq M$, there exists a smooth function with compact support $\varphi^\ell$, a constant $\rho>0$ and two increasing sequences $\left(r_k\right)_{k\in\mathbb{N}}$ and $\left(s_k\right)_{k\in\mathbb{N}}$ such that $r_k<s_k$ and for all $t\in[r_k;s_k]$, we have
\begin{equation*}
\int_\bRb\varphi^\ell(x)\nu^\ell(\ddx)=0<\rho=\int_{\bRb}\varphi^\ell(x)\mu_{r_k}^\ell(\ddx)\leq\int_{\bRb}\varphi^\ell(x)\mu_t^\ell(\ddx)\leq\int_{\bRb}\varphi^\ell(x)\mu_{s_k}^\ell(\ddx)=2\rho\,.
\end{equation*}
Moreover, for all $t\in[r_k;s_k]$, $\mu_t^\ell\in\mathcal{H}$. Applying Proposition \ref{prop:a:othermeasure}, we can construct a stationary state $(\widetilde{\nu}^1,\ldots,\widetilde{\nu}^M)\in\mathcal{S}_\sigma\bigcap\mathcal{H}$. This is a contradiction.

\noindent{}{\bf Step 2.} \emph{The details of the proof of the second statement are in Subsection~\ref{tomhanks2}.}

We use the previous result: all the limiting values are stationary states. Since the interacting potentials are quadratic, the function $\Lambda_1$ (defined as in Definition~\ref{tototo} with $q_i=1$ for any $1\leq i\leq d$) is a bijection from $\mathcal{S}_\sigma$ to $\Lambda_1\left(\mathcal{S}_\sigma\right)\subset\bRb^M$, hence we deduce that $\Lambda_1$ is a bijection from $\mathcal{A}_\sigma$ to $\mathcal{C}_\sigma:=\Lambda_1\left(\mathcal{A}_\sigma\right)\subset\bRb^M$.\\
Due to the continuity of the functions $t\mapsto\int_{\bRb^d}x\mu_t^\ell(x)\ddx$, we deduce that $\mathcal{C}_\sigma$ is path-connected. This implies that $\mathcal{A}_\sigma$ is also path-connected.

\noindent{}{\bf Step 3.} \emph{The proof of the third point is presented in Subsection~\ref{tomhanks3}.}

We proceed a \emph{reductio ad absurdum} by assuming the existence of $(\nu^1,\ldots,\nu^M)\in\mathcal{A}_\sigma\subset\mathcal{S}_\sigma$ such that $\Upsilon_\sigma\left(\nu^1,\ldots,\nu^M\right)\neq L_\sigma$. We recall that $\Lambda_1$ is a bijection of $\mathcal{A}_\sigma$ to $\mathcal{C}_\sigma\subset\bRb^M$. Due to the continuity of $\Lambda_1$ and $\Upsilon_\sigma$, there exists a closed set $\mathcal{D}_\sigma\subset\bRb^M$ that contains the point $\Lambda_1\left(\nu^1,\ldots,\nu^M\right)$ in its interior and such that $\Upsilon_\sigma(\mu^1,\ldots,\mu^M)\neq L_\sigma$ for all states $(\mu^1,\ldots,\mu^M)$ satisfying $\Lambda_1\left(\mu^1,\ldots,\mu^M\right)\in\mathcal{D}_\sigma$.

We now use similar arguments to the ones in the first step. For some $1\leq\ell\leq M$, there exists a smooth function with compact support $\varphi^\ell$, a constant $\rho>0$ and two increasing sequences $\left(r_k\right)_{k\in\mathbb{N}}$ and $\left(s_k\right)_{k\in\mathbb{N}}$ such that $r_k<s_k$ and for all $t\in[r_k;s_k]$, we have
\begin{equation*}
\rho=\int_{\bRb}\varphi^\ell(x)\mu_{r_k}^\ell(\ddx)\leq\int_{\bRb}\varphi^\ell(x)\mu_t^\ell(\ddx)\leq\int_{\bRb}\varphi^\ell(x)\mu_{s_k}^\ell(\ddx)=2\rho\,.
\end{equation*}
Moreover, for all $t\in[r_k;s_k]$, $\Lambda_1\left(\mu_t^\ell\right)\in\mathcal{D}_\sigma$. By applying Proposition \ref{prop:a:othermeasure}, we construct a stationary state $(\widetilde{\nu}^1,\ldots,\widetilde{\nu}^M)\in\mathcal{S}_\sigma$ with free energy equal to $L_\sigma:=\lim_{t\to+\infty}\Upsilon_\sigma(\mu_t^1,\ldots,\mu_t^M)$ and such that $\Lambda_1\left(\widetilde{\nu}^1,\ldots,\widetilde{\nu}^M\right)\in\mathcal{D}_\sigma$. By construction of the set $\mathcal{D}_\sigma$, we have arrived at a contradiction.

\mathversion{bold}
\subsubsection{The set \texorpdfstring{$\mathcal{A}_\sigma$}{Asigma} is included into \texorpdfstring{$\mathcal{S}_\sigma$}{Ssigma}}
\label{tomhanks}
\mathversion{normal}
We proceed a \emph{reductio ad absurdum} by assuming the existence of $(\nu^1,\ldots,\nu^M)\in\mathcal{A}_\sigma$ such that $(\nu^1,\ldots,\nu^M)$ is not a stationary state. According to Proposition \ref{edeka}, there exist $1\leq\ell\leq M$, $q\in\left(\mathbb{N}^*\right)^d$ and $\rho>0$ such that 
\begin{equation*}
\inf_{(\mu^1,\ldots,\mu^M)\in\mathcal{S}_\sigma}\left|\int_{\bRb^d} x^q\mu^\ell(x)\ddx-\int_{\bRb^d} x^q\nu^\ell(x)\ddx\right|\geq\rho\,.
\end{equation*}
Let $H^q_\kappa\subset\bRb^M$ be defined as
\begin{equation*}
H^q_\kappa:=\bRb^{\ell-1}\times\left[\int_\bRb x^q\nu^\ell(x)\ddx-\kappa;\int_\bRb x^q\nu^\ell(x)\ddx+\kappa\right]\times\bRb^{M-\ell}\,.
\end{equation*}
It is a closed bounded set with nonempty interior. Let us define $\mathcal{H}_\kappa^q:=\Lambda_q^{-1}\left(\left\{H^q_\kappa\right\}\right)$. By construction, $(\nu^1,\ldots,\nu^M)\in\mathcal{H}_{\frac{\rho}{4}}^q$ and $\mathcal{H}_{\frac{\rho}{2}}^q\bigcap\mathcal{S}_\sigma=\emptyset$. In particular, $\mu_\infty^\ell\notin\mathcal{H}_{\frac{\rho}{2}}^q$ where $\mu_\infty^\ell$ has been introduced in Proposition~\ref{jigsaw}.

Nonetheless, $(\mu_\infty^1,\ldots,\mu_\infty^M)$ and $(\nu^1,\ldots,\nu^M)$ are adherence values of the family of $M$-uple measures $\left\{(\mu_t^1,\ldots,\mu_t^M)\,;\,t\geq0\right\}$. Consequently, for any $\ell$ such that $1\leq \ell\leq M$, there exist two increasing sequences $(r_k)_{k\in\mathbb{N}}$ and $(s_k)_{k\in\mathbb{N}}$ such that for all $k\in\mathbb{N}$, $\mu_{r_k}^\ell\in\partial\mathcal{H}_{\frac{\rho}{4}}^p$, $\mu_{s_k}^\ell\in\partial\mathcal{H}_{\frac{\rho}{2}}^p$ and for all $r_k<t<s_k$, we have
\begin{equation*}
\frac{\rho}{4}<\left|\int_{\bRb^d}x^q\mu_t^\ell(x)\ddx-\int_{\bRb^d} x^q\nu^\ell(x)\ddx\right|<\frac{\rho}{2}\,.
\end{equation*}
By construction of the sequences $(r_k)_{k\in\mathbb{N}}$ and $(s_k)_{k\in\mathbb{N}}$, $\int_{\bRb^d}x^q\mu_{r_k}^\ell(x)\ddx=\int_{\bRb^d} x^q\nu(x)\ddx+\frac{\rho}{4}\theta_k^{1}$ and $\int_{\bRb^d} x^q\mu_{s_k}^\ell(x)\ddx=\int_{\bRb^d}x^q\nu(x)\ddx+\frac{\rho}{2}\theta_k^{2}$ where $\theta_k^1:=\pm1$ and $\theta_k^2\in[-1;1]$.

We can extract two subsequences (we continue to write $r_k$ and $s_k$ for the comfort of the reading) such that there exists $\theta\in\{-1\,;\,1\}$, independent from the index $k$, which satisfies
\begin{align*}
\theta\int_{\bRb^d}x^q\mu_{r_k}^\ell(x)\ddx=\theta\int_{\bRb^d} x^q\nu^\ell(x)\ddx+\frac{\rho}{4}\,,\quad\theta\int_{\bRb^d} x^q\mu_{s_k}^\ell(x)\ddx=\theta\int_{\bRb^d}x^q\nu^\ell(x)\ddx+\frac{\rho}{2}
\end{align*}
and for all $t\in[r_k;s_k]$, we have
\begin{align*}
\theta\int_{\bRb^d}x^q\nu^\ell(x)\ddx+\frac{\rho}{4}\leq\theta\int_{\bRb^d} x^q\mu_t^\ell(x)\ddx\leq\theta\int_{\bRb^d}x^q\nu^\ell(x)\ddx+\frac{\rho}{2}\,.
\end{align*}
Moreover, for all $t\in[r_k;s_k]$, $(\mu_t^1,\ldots,\mu_t^M)\in\mathcal{H}_{\frac{\rho}{2}}^q$. Without loss of generality, we assume that $\theta=1$.\\[2pt]
We apply Proposition \ref{prop:a:othermeasure} with a smooth function with compact support which is equal to $x^q$ if $|x|\leq R$ and equal to $0$ if $|x|\geq R+1$.\\
Taking $R$ sufficiently large, we deduce the existence of $\left(\widetilde{\nu}^1,\ldots,\widetilde{\nu}^M\right)\in\mathcal{H}_{\frac{\rho}{2}}^q\bigcap\mathcal{A}_\sigma\bigcap\mathcal{S}_\sigma$. However, $\mathcal{S}_\sigma\bigcap\mathcal{H}_{\frac{\rho}{2}}^q=\emptyset$. We have arrived at a contradiction.

\mathversion{bold}
\subsubsection{The set \texorpdfstring{$\mathcal{A}_\sigma$}{Asigma} is path-connected}
\mathversion{normal}
\label{tomhanks2}
According to the previous paragraph, the set $\mathcal{A}_\sigma$ is included in $\mathcal{S}_\sigma$, the set of stationary states. We now consider the application $\Lambda_q$ from the set of the probability measures to $\bRb^M$ as defined in Definition~\ref{tototo}.

By $\mathcal{C}_\sigma$, we denote the set $\Lambda_1\left(\mathcal{A}_\sigma\right)$. Due to the continuity of the application $t\mapsto\Lambda_1(\mu_t)$, we deduce that the set of limiting values of the family $\left\{\Lambda_1\left(\mu_t\right)\,;\,t\geq0\right\}$ is path-connected. In other words, the set $\mathcal{C}_\sigma$ is path-connected.

Consequently, for all $M$-uple of probability measures $(\mu_0^1,\ldots,\mu_0^M)$ and $(\mu_1^1,\ldots,\mu_1^M)$ in $\mathcal{A}_\sigma$, there exists an application from $[0;1]$ to the set of the $M$-uple of probability measures, $\theta\mapsto(\mu_\theta^1,\ldots,\mu_\theta^M)$ such that for all $1\leq\ell\leq M$, the functions $\theta\mapsto\int_{\bRb^d} x\mu_\theta^\ell(x)\ddx$ are continuous.

We deduce that for all $x\in\bRb$, for all $1\leq\ell\leq M$, the function $\theta\mapsto V_\ell(x)+\sum_{j=1}^Ma_jF_{\ell j}\ast\mu_\theta^j(x)$ is continuous. We know that for all $\theta\in[0;1]$, the measure $\mu_\theta^\ell$ is absolutely continuous with respect to the Lebesgue measure with a density $\mu_\theta^\ell$ that satisfies
\begin{equation*}
\mu_\theta^\ell(x)=\frac{\exp\left[-\frac{2}{\sigma_\ell^2}\left(V_\ell(x)+\sum_{j=1}^Ma_jF_{\ell j}\ast \mu_\theta^j(x)\right)\right]}{\int_{\bRb^d}\exp\left[-\frac{2}{\sigma_\ell^2}\left(V_\ell(y)+\sum_{j=1}^Ma_jF_{\ell j}\ast \mu_\theta^j(y)\right)\right]\ddy}\,.
\end{equation*}
Hence, for any smooth function with compact support $\varphi^\ell$, the application $\theta\mapsto\int_{\bRb^d}\phi^\ell(x)\mu_\theta^\ell(x)\ddx$ is continuous. Therefore, the set $\mathcal{A}_\sigma$ is path-connected.

\mathversion{bold}
\subsubsection{The free energy is constant on \texorpdfstring{$\mathcal{A}_\sigma$}{Asigma}}
\mathversion{normal}
\label{tomhanks3}
We introduce $L_\sigma:=\lim_{t\to+\infty}\Upsilon_\sigma(\mu_t^1,\ldots,\mu_t^M)$. We know that for any $(\mu^1,\ldots,\mu^M)\in\mathcal{A}_\sigma$, we have the estimate $\Upsilon_\sigma(\mu^1,\ldots,\mu^M)\leq L_\sigma$. In particular, the free energy of a limiting value is less than $L_\sigma$, the limit of the free energy.

We will argue again by contradiction; we assume that there exist $(\nu^1,\ldots,\nu^M)\in\mathcal{A}_\sigma$ and $\eta>0$ satisfying $\Upsilon_\sigma\left(\nu^1,\ldots,\nu^M\right)=L_\sigma-2\eta$. We use arguments similar to those of the previous subsections.

Due to the continuity of $\Lambda_q$, we deduce that there exists $\rho>0$ sufficiently small such that $\Upsilon_\sigma(\mu^1,\ldots,\mu^M)\leq L_\sigma-\eta$ for all $(\mu^1,\ldots,\mu^M)\in\mathcal{S}_\sigma\bigcap\mathcal{H}_\rho^q$ where $\mathcal{H}_\rho^q$ is defined like in Subsection~\ref{tomhanks}, as the set of the stationary states $(\mu^1,\ldots,\mu^M)$ such that
\[
\left|\int_{\bRb^d} x^q\mu^\ell(x)\ddx-\int_{\bRb^d}x^q\nu^\ell(x)\ddx\right|\leq\rho\,,
\]
for all $1\leq\ell\leq M$.

Nevertheless, the stationary state $(\mu_\infty^1,\ldots\mu_\infty^M)\in\mathcal{A}_\sigma$ introduced in Proposition~\ref{jigsaw} satisfies the equality $\Upsilon_\sigma\left(\mu_\infty^1,\ldots\mu_\infty^M\right)=L_\sigma$. Hence, $(\mu_\infty^1,\ldots\mu_\infty^M)\notin\mathcal{H}_\rho$.

We now proceed exactly as in Subsection~\ref{tomhanks}. We obtain the existence of two increasing sequences $(r_k)_{k\in\mathbb{N}}$ and $(s_k)_{k\in\mathbb{N}}$, of $1\leq\ell_0\leq M$ and $\theta\in\{-1\,;\,1\}$, all independent of the index $k$, which satisfy
\begin{align*}
\theta\int_{\bRb^d} x^q\mu_{r_k}^{\ell_0}(x)\ddx=\theta\int_{\bRb^d} x^q\nu^{\ell_0}(x)\ddx+\frac{\rho}{4}\,,\quad\theta\int_{\bRb^d} x^q\mu_{s_k}^{\ell_0}(x)\ddx=\theta\int_{\bRb^d} x^q\nu^{\ell_0}(x)\ddx+\frac{\rho}{2}
\end{align*}
and for all $t\in[r_k;s_k]$, we have
\begin{align*}
\theta\int_{\bRb^d} x^q\nu^{\ell_0}(x)\ddx+\frac{\rho}{4}\leq\theta\int_{\bRb^d} x^q\mu_t^{\ell_0}(x)\ddx\leq\theta\int_{\bRb^d} x^q\nu^{\ell_0}(x)\ddx+\frac{\rho}{2}\,.
\end{align*}
Moreover, for all $t\in[r_k;s_k]$, $(\mu_t^1,\ldots,\mu_t^M)\in\mathcal{H}_\rho^q$. Without any loss of generality, we assume that $\theta=1$. We apply Proposition \ref{prop:a:othermeasure} with a smooth function with compact support equal to $x^p$ if $|x|\leq R$ and equal to $0$ if $|x|\geq R+1$. By taking $R$ sufficiently large, we deduce the existence of a $M$-uple of measures $\left(\widetilde{\nu}^1,\ldots\widetilde{\nu}^M\right)\in\mathcal{H}_{\frac{\rho}{2}}^q\bigcap\mathcal{A}_\sigma$ such that $\Upsilon_\sigma\left((\widetilde{\nu}^1,\ldots\widetilde{\nu}^M\right)=L_\sigma$. Nonetheless, by construction of $\mathcal{H}_{\frac{\rho}{2}}^q$, we have $\Upsilon_\sigma\left((\widetilde{\nu}^1,\ldots\widetilde{\nu}^M\right)\leq L_\sigma-\eta$. We have arrived at a contradiction.

\subsection{Proof of Corollary~\ref{piccolo}}
According to Theorem~\ref{chichi}, the set of the limiting values $\mathcal{A}_\sigma$ is a path-connected subset of $\mathcal{S}_\sigma$ in which the free-energy is constant. Due to the hypothesis of Corollary~\ref{piccolo}, any path-connected subset of $\mathcal{S}_\sigma$ with constant free-energy is a single element. This achieves the proof.

\subsection{Proof of Corollary~\ref{piccolo2}}

\noindent{}{\bf Step 1.} We introduce $\mathcal{S}_0$, the set of stationary states $(\mu^1,\ldots,\mu^M)$ such that there exist decreasing sequences $(\sigma_\ell^k)_{k\in\mathbb{N}}$ which go to $0$ for any $1\leq\ell\leq M$ and a sequence $\left(\nu^{1,\sigma_1^k},\ldots\nu^{M,\sigma_M^k}\right)_{k\in\mathbb{N}}$ of stationary states of diffusion \eqref{eq: mean-field SDE} which converges weakly to $(\mu^1,\ldots,\mu^M)$, as $k$ goes to infinity.

\noindent{}{\bf Step 2.} By proceeding exactly like in \cite{JOTP} (Proposition 3.10, Lemma 3.3, Theorem 3.7 and Proposition 3.8 of the aforementioned paper hold under the set of assumptions of this section), we obtain that the set $\mathcal{S}_0$ is included into $\left\{\delta_{(a_1,\cdots,a_M)}\,;\,a_1\in\mathcal{G}_1\,,\cdots,a_M\in\mathcal{G}_M\right\}$.

\noindent{}{\bf Step 3.} We deduce that for any $\rho>0$, there exists $\sigma_0>0$ sufficiently small such that for all $\sigma_1,\ldots,\sigma_M<\sigma_0$, for all $\nu\in\mathcal{S}_\sigma$ and for all $1\leq\ell\leq M$, we have $\displaystyle\min_{a_\ell\in\mathcal{G}_\ell}\int_{\bRb^d}|x-a_\ell|^2\nu^\ell(x)\ddx\leq\rho$.

\noindent{}{\bf Step 4.} Theorem~\ref{chichi} tells us that $\mathcal{A}_\sigma\subset\mathcal{S}_\sigma$. This achieves the proof.

\bigskip

Let us remark that we do not have \emph{a priori} the existence of $a_\ell\in\mathcal{G}_\ell$ such that
\begin{equation*}
\limsup_{\sigma_1,\ldots,\sigma_M\to0}\limsup_{t\to+\infty}\int_{\bRb^d}|x-a_\ell|^2\mu_t^\ell(x)\ddx=0\,.
\end{equation*}

\subsection*{Acknowledgments} 
\RV{We would like to thank the referees for their useful comments and suggestions which have helped us to improve the presentation of the paper.}

The research of HD was supported by EPSRC Grant No. EP/Y008561/1. GP is partially supported by an ERC-EPSRC Frontier Research Guarantee through Grant No. EP/X038645, ERC Advanced Grant No. 247031 and a Leverhulme Trust Senior Research Fellowship, SRF$\backslash$R1$\backslash$241055.

JT was supported by the French ANR grant METANOLIN (ANR-19-CE40-0009).

\appendix
%
%
\section{Algebraic study of Stationary States in the Small-Noise Limit}
\label{app:pogacar}

In this section, we will focus on solving the algebraic equation~\eqref{eq: m-systems}. Each solution to this system will be called a ``candidate solution''. If we can construct such a candidate solution and if we can show that it satisfies~\eqref{linegalite} then we have, at least for $\sigma_0$ sufficiently small, a stationary state. This follows from Proposition~\ref{thunder}.

To simplify the analysis, we will assume that the synchronization hypothesis, Assumption~\ref{asp: assumption:synchronization} holds. Formally, the convexity of the interacting potentials compensates for the lack of convexity of the confining ones. That is, we assume that for each measure $(\nu^1,\cdots,\nu^M)$, the effective potential $W_k$ is convex for any $1\leq k\leq M$. Here, the effective potential $W_k$ is defined as:
\[
W_k(x):=V_k(x)+\sum_{\ell=1}^M a_\ell F_{k\ell}\ast\mu^\ell(x)\,.
\]
When $\nu^k=\mu^k$ is an invariant probability measure, the potential $W_k$ is the potential that drives the $k$-th SDE, starting from the steady state. In particular, for any $m^0$, $m_k^0$ is always the unique global minimizer of $W_k^{(m^0)}$ so that we can always apply Proposition~\ref{thunder}.

Trying to verify that this assumption is satisfied may seem non-trivial. However, when $F_{k\ell}$ is quadratic, we can easily prove in the multi-species model as well as in the one-species model that the convexity of the potentials $V_k+\sum_{\ell=1}^m a_\ell F_{k\ell}\ast\mu^\ell$ is \emph{equivalent} to the inequality

\[
\sum_{\ell=1}^M a_\ell\alpha_{k\ell}\geq \sup_{\bRb^d}-\nabla^2V_k\,,
\]
for any $k=1,\ldots, M$. 

\begin{remark}
Since $\sum_{\ell=1}^M a_\ell=1$ we have 
\[
\sum_{\ell=1}^M a_\ell\alpha_{k\ell}\geq \left(\min_{1\leq\ell\leq M}\alpha_{k\ell} \right)\sum_{\ell=1}^M a_{\ell}=\min_{1\leq\ell\leq M}\alpha_{k\ell}.
\]
It implies that if $\min_{\ell}\alpha_{k\ell}\geq \sup (-V''_k)$, then the synchronization condition~\ref{asp: assumption:synchronization} is fulfilled.
\end{remark}

We also emphasize the fact that that for any $1\leq k\leq M$ and any $m=(m_1,\cdots,m_M)\in\bRb^M$, the functions $W_k^{(m)}$ defined as
\[
W_k^{(m)}(x):=V_k(x)+\frac{1}{2}\sum_{\ell=1}^M a_\ell\alpha_{k\ell}\left(x-m_\ell\right)^2\,,
\]

are convex.
%
%
\subsection{Study of the Algebraic equations \texorpdfstring{\eqref{eq: m-systems}}{(23)} under the Structural Assumption}

We recall that the structural assumption is
\begin{equation*}
    \frac{V_k}{\sigma_k^2}=\frac{\bar{V}}{\sigma^2} \quad \forall k, \quad\text{and}\quad \frac{\alpha_{kl}}{\sigma_k^2}=\alpha_\ell, \quad k=1, \dots M,
\end{equation*}
and that the algebraic system \eqref{eq: m-systems} reads
\[
V_k'(m_k)+\Big(\sum_{\ell=1}^M a_\ell \alpha_{k\ell}\Big)m_k=\sum_{\ell=1}^Ma_\ell\alpha_{k\ell}m_\ell\quad \text{for all}\quad k=1,\ldots, M,
\]
which are equivalent to the following system; 
\[
\frac{V_k'(m_k)}{\sigma_k^2}+\Big(\sum_{\ell=1}^M a_\ell\frac{ \alpha_{k\ell}}{\sigma_k^2}\Big)m_k=\sum_{\ell=1}^Ma_\ell\frac{\alpha_{k\ell}}{\sigma_k^2}m_\ell\quad \text{for all}\quad k=1,\ldots, M.
\]
Under the structural assumption, the above system reduces to
\[
\frac{\bar{V}'(m_k)}{\sigma^2}+\Big(\sum_{\ell=1}^M a_\ell\frac{\alpha_\ell}{\sigma^2}\Big)m_k=\sum_{\ell=1}^Ma_\ell\frac{\alpha_\ell}{\sigma^2} m_\ell\quad \text{for all}\quad k=1,\ldots, M.
\]
This implies that for any $1\leq i\neq j\leq M$
\[
\bar{V}'(m_i)+\Big(\sum_{\ell=1}^Ma_\ell\alpha_{\ell}\Big)m_i=\bar{V}'(m_j)+\Big(\sum_{\ell=1}^Ma_\ell\alpha_{\ell}\Big)m_j
\]
which can be rewritten as
\[
(m_i-m_j)\left(\int_{0}^1 \bar{V}''(m_i+ t(m_j-m_i))\,\ddt+ \sum_{\ell=1}^Ma_\ell\alpha_{\ell}\right)=0.
\]
Thus, for any $1\leq i\neq j\leq M$ we have $m_i=m_j$ or
\[
\int_{0}^1 \bar{V}''(m_i+ t(m_j-m_i))\,\ddt+ \sum_{\ell=1}^Ma_\ell\alpha_{\ell} =0.
\]
As a consequence, if the synchronization condition is satisfied, that is, if $\bar{V}''(x)>-\sum_{\ell=1}^Ma_\ell\alpha_{\ell}$ for all $x$, then the latter equation has no solution. Therefore, we deduce that
\[
m_1=m_2=\ldots=m_M=m^* \quad\text{where}\quad \bar{V}'(m^*)=0
\]
are the only solutions of the algebraic system \eqref{eq: m-systems}. 
For example, for the double-well potential $V(x)=\frac{x^4}{4}-\frac{x^2}{2}$, if $\sum_{\ell=1}^Ma_\ell\alpha_{\ell}>1$, then we must have
\[
m_1=m_2=\ldots=m_M=m^*, \quad V'(m^*)=0.
\]
It is clear that in the general case (where we only assume that $V_i$ are the same), $m_1=\ldots=m_M=m^*$ with $ V'(m^*)=0$ is always a solution, but how to establish a simple criterion for the (non-)existence of other solutions as above is not obvious.
%
%
\subsection{The Two-Species Case in One Dimension}

We consider now the case where
\[
d=1, V_1=V_2=V, \quad \alpha=\alpha_{1,2}=\alpha_{2,1}. 
\]

Since we have assumed that the synchronization assymption holds, the candidates are of the form $(m_1,m_2)$. In particular, any equilibrium point converges, in the small-noise limit, to Dirac measures of the form $\delta_{(m_1,m_2)}$.

Then, the previous equation becomes the following.
\begin{align}
&V'(m_1)+(1-a)\alpha(m_1-m_2)=0\label{m1-2}\\
&V'(m_2)-a\alpha(m_1-m_2)=0\,.\label{m2-2}
\end{align}
From \eqref{m1-2} and \eqref{m2-2}, we obtain
\begin{align}
&m_1=m_2+\frac{V'(m_2)}{a \alpha},\label{m1-2bis}\\
&V'\Big(m_2+\frac{V'(m_2)}{a \alpha}\Big)+\frac{1-a}{a}V'(m_2)=0\label{m2-2bis}
\end{align}
Using the fundamental theorem of calculus,
\[
F(b)-F(a)=\int_0^1 \frac{\dd}{\ddt}F(a+t(b-a))\,\ddt=(b-a)\int_0^1 F'(a+t(b-a))\,\ddt,
\]
\eqref{m2-2} can be written as
\begin{equation}
\label{eq: m2-3}
\frac{V'(m_2)}{a}\Big[\frac{1}{\alpha}\int_0^1 V''\left(m_2+t\frac{V'(m_2)}{a\alpha}\right)\,\ddt+1 \Big]=0,
\end{equation}
which implies that $m_2$ satisfies $V'(m_2)=0$ or
\begin{equation}
\label{eq: eqn2 for m2}    
\int_0^1 V''\Big(m_2+t\frac{V'(m_2)}{a\alpha}\Big)\,\ddt+\alpha=0.
\end{equation}
As a consequence, if $V''(x)> -\alpha$ for all $x$ then the latter has no solution.

\begin{remark}
We point out that $V''+\alpha>0$ is not the synchronization assumption. In fact, $\alpha$ is just the cross-interaction parameter. The synchronization assumption is $V''+a\alpha_{11}+(1-a)\alpha>0$. 

\end{remark}

Next, we note that \eqref{eq: eqn2 for m2} can be written as
\[
(V''(m_2)+\alpha)+\frac{V'(m_2)}{a\alpha}\int_0^1\int_0^1 t V^{(3)}\Big(m_2+s t \frac{V'(m_2)}{a\alpha}\Big)\,\dds\,\ddt=0.
\]
Generally, suppose $V\in C^k$, then \eqref{eq: eqn2 for m2} can be written as
\begin{align}
\label{eq: general eqn for m2}
&\alpha+\sum_{i=0}^{k-3}\frac{1}{(i+1)!}V^{(i+2)}(m_2)\Big(\frac{V'(m_2)}{a \alpha}\Big)^i\\
&\nonumber+\Big(\frac{V'(m_2)}{a\alpha}\Big)^{k-2}\int_{[0,1]^{k-1}} \prod_{i=1}^{k-2}t_i^{k-1-i} V^{(k)}\Big(m_2+t_{k-1}\prod_{i=1}^{k-2}t_i^{k-1-i} \frac{V'(m_2)}{a\alpha}\Big) \, \ddt_{k-1}\,\ldots\, \ddt_1=0.
\end{align}
In particular, if $V$ is a polynomial of degree $k-1$, then $V^{(k)}=0$, thus the last term in \eqref{eq: general eqn for m2} vanishes, and it becomes
\begin{equation}
\label{eq: eqn for m2 V polynomial}
\alpha+\sum_{i=0}^{k-3}\frac{1}{(i+1)!}V^{(i+2)}(m_2)\Big(\frac{V'(m_2)}{a \alpha}\Big)^i=0,   
\end{equation}
which is a polynomial equation of degree $(k-3)(k-2)$. This implies that the system \eqref{m1-2}-\eqref{m2-2} can have maximum $(k-3)(k-2)+(k-2)=(k-2)^2$ solutions. This fact can alternatively be derived from Bezout's theorem for the polynomial system \eqref{m1-2}-\eqref{m2-2}. Note also that if $V$ is an even polynomial, then so is the LHS of \eqref{eq: eqn for m2 V polynomial}. The number of real roots of a polynomial equation in any interval can be found exactly using Sturm's theorem. 

We also observe that if $\tilde m$ is a critical point of $V$ (i.e., $V'(\tilde m)=0$) and $V''(\tilde m)\neq -\alpha$, then it is not a solution of \eqref{eq: eqn for m2 V polynomial}.

\subsection{The two-species system in a double-well potential}
\[
V(x)=\frac{x^4}{4}-\frac{x^2}{2}, \quad V'(x)=x^3-x,\quad V''(x)=3x^2-1.
\]
Thus, if $\alpha>1$, there are no other equilibria apart from the critical points of $V$.

Now, consider $\alpha<1$
\begin{align*}
0&=\frac{1}{\alpha}\int_0^1 V''\left(m_2+t\frac{V'(m_2)}{a\alpha}\right)\,\ddt +1
\\&=\frac{1}{\alpha}\int_0^1 \Big[3\left(m_2+t\frac{m_2^3-m_2}{a\alpha}\right)^2-1\Big]\,\ddt+1
\\&=\frac{1}{\alpha}\Big[(3m_2^2-1)+\frac{3m_2^2(m_2^2-1)}{a \alpha}+\frac{m_2^2(m_2^2-1)^2}{a^2\alpha^2}\Big]+1
\end{align*}
which can be obtained in fact using \eqref{eq: eqn for m2 V polynomial}. This gives, for $X:=m_2^2$
\begin{equation}
\label{eq: cubic for m2}
X^3+(3a\alpha -2) X^2 +[3a\alpha(a\alpha-1)+1] X+a^2\alpha^2(\alpha-1)=0.    
\end{equation}
Note that $3a\alpha(a\alpha-1)+1>0$. If $a\alpha\geq \frac{2}{3}$ then $3a\alpha-2\geq 0$, the coefficients of this cubic equation has only one change of signs. Therefore, by Descartes' rule of sign, it has exactly one positive root. When $0<a\alpha<2/3$, it may have one or three positive roots (including multiplicity).

We point out that the synchronization assumption is $a\alpha_{11}+(1-a)\alpha>1$, which is different from hypothesis $a\alpha\geq \frac{2}{3}$.

The following lemma determines the conditions for the coefficients of a general cubic equation so that it has a certain number of positive roots. 
\begin{lemma}
Consider the cubic equation
\[
P(x)=x^3+ px^2+qx+r=0
\]
and the associated discriminant
\[
\Delta=\Delta(p,q,r):=-27 r^2+18pqr-4q^3-4p^3 r+p^2q^2.
\]
Then
\begin{enumerate}[(i)]
    \item $P$ has three real, distinct, positive roots if and only if
\begin{align*}
    \Delta> 0, \quad p<0, \quad q>0, \quad r<0.
\end{align*} 
  \item $P$ has two real positive roots (i.e, a double root $x_1>0$ and a single root $x_1\neq x_2=x_3>0$) if and only if
\begin{align*}
    \Delta=0, \quad p^2\neq 3q,\quad p<0, \quad q>0, \quad r<0.
\end{align*} 
  \item $P$ has  one positive root and two non-real complex conjugate roots if and only if
\begin{align*}
    \Delta<0, \quad r<0.
\end{align*} 
  \item $P$ has a triple positive root if and only if
\begin{align*}
        \Delta=0, \quad p^2= 3q,\quad p<0.
\end{align*} 
  \end{enumerate}
\end{lemma}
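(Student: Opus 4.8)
The plan is to separate the two features being characterised: whether the roots are real, which is governed by the sign of the discriminant $\Delta$, and whether the real roots are positive, which is governed by the signs of $p,q,r$ through Vieta's formulas and Descartes' rule of signs. First I would record three standard facts. Writing $P(x)=(x-x_1)(x-x_2)(x-x_3)$ over $\mathbb{C}$, a routine computation shows that the quantity $\Delta=\Delta(p,q,r)$ in the statement equals $(x_1-x_2)^2(x_1-x_3)^2(x_2-x_3)^2$. Hence exactly one of the following mutually exclusive and exhaustive regimes holds: (a) $x_1,x_2,x_3$ are real and distinct, so $\Delta>0$; (b) there is a real repeated root, so $\Delta=0$; (c) there is one real root $x_1$ and a non-real conjugate pair $x_2=\overline{x_3}$, in which case $(x_2-x_3)^2=-4(\mathrm{Im}\,x_2)^2<0$ while $(x_1-x_2)^2(x_1-x_3)^2=|x_1-x_2|^4>0$, so $\Delta<0$. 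Thus the sign of $\Delta$ characterises which regime occurs. I would also record Vieta's relations $p=-(x_1+x_2+x_3)$, $q=x_1x_2+x_1x_3+x_2x_3$, $r=-x_1x_2x_3$.

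The second step is a positivity criterion: \emph{if all roots of $P$ are real, then $(p<0,\ q>0,\ r<0)$ if and only if all roots are $>0$}. The forward implication is immediate from Vieta. For the converse, $-P(-x)=x^3-px^2+qx-r$ has all coefficients strictly positive, hence no positive zero, so $P$ has no negative zero; and $P(0)=r\neq 0$ rules out $0$; therefore every real root is positive. The third step distinguishes a double root from a triple root when $\Delta=0$: the repeated root is then a common zero of $P$ and $P'(x)=3x^2+2px+q$, whose discriminant is $4(p^2-3q)$. If $p^2\neq 3q$, then $P'$ has two distinct zeros, $P$ shares at most one with it, so $P$ has exactly one double and one distinct simple root (all real). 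If $p^2=3q$, then $P'$ has the unique double zero $-p/3$, which must be the repeated root of $P$; since $P''(-p/3)=-2p+2p=0$ as well, it has multiplicity three and $P(x)=(x+p/3)^3$. Conversely, a triple root $x_0$ forces $p=-3x_0$, $q=3x_0^2$, $r=-x_0^3$, whence $p^2=3q$ and $\Delta=0$.

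Assembling the cases: for (i), $\Delta>0$ is equivalent to three distinct real roots, and by the positivity criterion these are all positive exactly when $p<0,\,q>0,\,r<0$. For (ii), $\Delta=0$ with $p^2\neq 3q$ is equivalent to a double root together with a distinct simple root, both real (the hypothesis $p^2\neq 3q$ being precisely what excludes the triple-root degeneracy, in which the statement's ``single root $x_1\neq x_2=x_3$'' would be vacuous), and the positivity criterion again yields $p<0,\,q>0,\,r<0$. For (iii), $\Delta<0$ is equivalent to one real root $x_1$ plus a non-real conjugate pair $z,\overline z$; then $x_1|z|^2=-r$ with $|z|^2>0$, so $x_1>0$ iff $r<0$ (and $r<0$ also excludes $x_1=0$), and no sign condition on $p,q$ is needed. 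For (iv), $\Delta=0$ with $p^2=3q$ is equivalent to a triple root equal to $-p/3$, which is positive iff $p<0$.

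I do not expect a genuine obstacle here: the argument only uses Vieta's relations, Descartes' rule of signs, and the elementary structure of $P'$. The only points requiring care are the precise ``if and only if'' form of the discriminant trichotomy (its exhaustiveness and exclusivity) and the double-versus-triple root dichotomy via $p^2=3q$, which is the one step that goes beyond Vieta and Descartes.
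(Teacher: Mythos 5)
The paper states this lemma without giving a proof: it is presented as a standard classification result for monic cubics, and the text moves directly from the statement to its application in the inequality \eqref{eq:inqAB}. So there is no paper proof for your attempt to be compared against. Your argument itself is correct and complete. The key ingredients are exactly the right ones: the identity $\Delta=(x_1-x_2)^2(x_1-x_3)^2(x_2-x_3)^2$ giving the trichotomy on the nature of the roots; the positivity criterion obtained from Vieta's relations together with the observation that $-P(-x)=x^3-px^2+qx-r$ has all coefficients positive (hence no positive zero, hence $P$ has no nonpositive real zero once $r\neq 0$); the relation $x_1\lvert z\rvert^2=-r$ handling the $\Delta<0$ case, where $z,\bar z$ is the conjugate pair; and the discriminant $4(p^2-3q)$ of $P'$ separating the double-root and triple-root subcases of $\Delta=0$. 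Each biconditional in the lemma follows from these, as you assemble. One minor remark on presentation: in case (ii), the statement's notation ``$x_1\neq x_2=x_3$'' is somewhat garbled (it reads as if $x_1$ is the double root yet $x_2=x_3$), but you correctly interpret the intended content as a double root together with a distinct simple root, both positive, with the hypothesis $p^2\neq 3q$ excluding the triple-root degeneracy.
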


Applying the above lemma, consider the following inequality.
\begin{multline}
\label{eq:inqAB}
-27 (B^2(A-1))^2+18 (3B-2)(3B^2-3B+1)B^2(A-1)-4(3B^2-3B+1)^3-4(3B-2)^3 B^2(A-1)
\\+(3B-2)^2(3B^2-3B+1)^2>0
\end{multline}
where $0<A=\alpha<1$, $0<B=a \alpha<2/3$. It is much more convenient to work in terms of $A$ and $B$ than $a$ and $\alpha$. \eqref{eq:inqAB} can be written as
\[
B^2(27 B^4-54 AB^3+(27A^2+18)B^2-18 AB-(1-4A))<0,
\]
which is equivalent to
\begin{align*}
&\qquad  (27 B^4-54 AB^3+(27A^2+18)B^2-18 AB-(1-4A))<0
\\& \Longleftrightarrow \Big[B^2-AB+\big(\frac{2\sqrt{1-A}}{3\sqrt{3}}+\frac{1}{3}\big)\Big]\Big[B^2-AB+\big(-\frac{2\sqrt{1-A}}{3\sqrt{3}}+\frac{1}{3}\big)\Big]<0\\
&\Longleftrightarrow B^2-AB+\big(-\frac{2\sqrt{1-A}}{3\sqrt{3}}+\frac{1}{3}\big)<0,
\end{align*}
because
\[
\Big[B^2-AB+\left(\frac{2\sqrt{1-A}}{3\sqrt{3}}+\frac{1}{3}\right)\Big]>0\quad \text{since}\quad A^2-4\left(\frac{2\sqrt{1-A}}{3\sqrt{3}}+\frac{1}{3}\right)<0 \quad\text{for}\quad 0<A<1.
\]
Solving the last inequality for $0<A<1,~ 0<B<2/3$ we get
\[
0<A<\frac{1}{4}\quad \text{and}\quad 0<B<\frac{1}{2}\left(A+\sqrt{A^2+\frac{8\sqrt{1-A}}{3\sqrt{3}}-\frac{4}{3}}\right).
\]
and
\[
1/4\leq A<2/3\quad \text{and}\quad \frac{1}{2}\left(A-\sqrt{A^2+\frac{8\sqrt{1-A}}{3\sqrt{3}}-\frac{4}{3}}\right)<B<\frac{1}{2}\left(A+\sqrt{A^2+\frac{8\sqrt{1-A}}{3\sqrt{3}}-\frac{4}{3}}\right).
\]
Note that for $A>2/3$
\[
B^2-AB+\big(-\frac{2\sqrt{1-A}}{3\sqrt{3}}+\frac{1}{3}\big)>0.
\]
%
%
\section{Quadratic confining potentials}
\label{app:quadr}

We perform explicit calculations for the case of quadratic potentials.
\[
V_1(x)=\frac{1}{2} x^T p x, \quad V_2(x)=\frac{1}{2}x^T q x,
\]
where $p,q\in\R^{d\times d}$ are two positive symmetric definite matrices.

Using the following Gaussian integrals, where $A\in\R^{d\times d}$ is a given symmetric positive definite matrix and $B\in\R^d$ is a given vector:
\begin{align*}
&\int_{\R^d}\exp\Big(-\frac{1}{2}x^T A x+ x^TB\Big)\,\ddx=\sqrt{\frac{(2\pi)^d}{\det A}}\, \exp\Big(\frac{1}{2}B^T A^{-1}B\Big),
\\ &\int_{\R^d}x\exp\Big(-\frac{1}{2}x^T A x+ x^TB\Big)\,\ddx=\sqrt{\frac{(2\pi)^d}{\det A}}\, \exp\Big(\frac{1}{2}B^T A^{-1}B\Big) A^{-1}B,
\end{align*}
we calculate
\begin{align*}
(i)\quad &\int_{\R^d} \exp\Big(-\frac{2}{\sigma^2}\big[V_1(x)+a\big(\frac{1}{2}x^T\alpha_{11} x-x^T\alpha_{11} m_1\big)+(1-a)\big(\frac{1}{2}x^T\alpha_{12} x-x^T\alpha_{12} m_2\big)\big]\Big)\,\ddx
\\&=\int_{\R^d} \exp\Big(-\frac{2}{\sigma^2}\Big[\frac{1}{2}x^T(p+a\alpha_{11}+(1-a)\alpha_{12})x-x^T\big(a\alpha_{11} m_1+(1-a)\alpha_{12} m_2\big)\Big]\Big)\,\ddx
\\&=\int_{\R^d} \exp\Big(-\frac{1}{2}x^T A_1 x+x^TB_1\Big)\,\ddx
\\&=\sqrt{\frac{(2\pi)^d}{\det A_1}}~ \exp\Big(\frac{1}{2}B_1^T A_1^{-1}B_1\Big),
\\ \\
(ii)\quad &\int_{\R^d} x\exp\Big(-\frac{2}{\sigma^2}\big[V_1(x)+a\big(\frac{1}{2}x^T\alpha_{11} x-x^T\alpha_{11} m_1\big)+(1-a)\big(\frac{1}{2}x^T\alpha_{12} x-x^T\alpha_{12} m_2\big)\big]\Big)\,\ddx
\\&=\int_{\R^d} x\exp\Big(-\frac{1}{2}x^T A_1 x+x^TB_1\Big)\,\ddx
\\&=\sqrt{\frac{(2\pi)^d}{\det A_1}}~ \exp\Big(\frac{1}{2}B_1^T A_1^{-1}B_1\Big)A_1^{-1}B_1
\\ \\
(iii)\quad& \int_{\R^d}\exp\Big(-\frac{2}{\sigma^2}\big[V_2(x)+a\big(\frac{1}{2}x^T\alpha_{21} x-x^T\alpha_{21} m_1\big)+(1-a)\big(\frac{1}{2}x^T\alpha_{22} x-x^T\alpha_{22} m_2\big)\big]\Big)\,\ddx
\\&=\int_{\R^d} \exp\Big(-\frac{1}{2}x^T A_2 x+x^TB_2\Big)\,\ddx
\\&=\sqrt{\frac{(2\pi)^d}{\det A_2}}~ \exp\Big(\frac{1}{2}B_2^T A_2^{-1}B_2\Big),\\ \\
(iv)\quad& \int_{\R^d} x \exp\Big(-\frac{2}{\sigma^2}\big[V_2(x)+a\big(\frac{1}{2}x^T\alpha_{21} x-x^T\alpha_{21} m_1\big)+(1-a)\big(\frac{1}{2}x^T\alpha_{22} x-x^T\alpha_{22} m_2\big)\big]\Big)\,\ddx
\\&=\int_{\R^d} x \exp\Big(-\frac{1}{2}x^T A_2 x+x^TB_2\Big)\,\ddx
\\&=\sqrt{\frac{(2\pi)^d}{\det A_2}}~ \exp\Big(\frac{1}{2}B_2^T A_2^{-1}B_2\Big)A_2^{-1}B_2,
\end{align*}
where
\begin{align*}
A_1&=\frac{2}{\sigma^2} (p+a\alpha_{11}+(1-a)\alpha_{12}),\\
B_1&=\frac{2}{\sigma^2}\big(a\alpha_{11} m_1+(1-a)\alpha_{12} m_2\big),\\
A_2&=\frac{2}{\sigma^2} (q+a\alpha_{21}+(1-a)\alpha_{21}),\\
B_2&=\frac{2}{\sigma^2}\big(a\alpha_{21} m_1+(1-a)\alpha_{22} m_2\big).
\end{align*}
Substituting these computations to \eqref{eq: m-systems} we obtain
\begin{subequations}
 \begin{align*}
 m_1&=A_1^{-1} B_1=(p+a\alpha_{11}+(1-a)\alpha_{12})^{-1}\big(a\alpha_{11} m_1+(1-a)\alpha_{12} m_2\big),
 \\ m_2&=A_2^{-1} B_2=(q+a\alpha_{21}+(1-a)\alpha_{22})^{-1}\big(a\alpha_{21} m_1+(1-a)\alpha_{22} m_2\big).
 \end{align*}
\end{subequations}
This system can be simplified to 
\begin{subequations}
 \begin{align*}
 (p+(1-a)\alpha_{12})m_1-(1-a)\alpha_{12}m_2&=0,\\
-a\alpha_{21}m_1+ (q+a\alpha_{21})m_2&=0,
 \end{align*}
\end{subequations}
which is equivalent to
 \begin{equation}
\label{eq: m-system-reduced}     
 m_2=\Big(\frac{1}{1-a}\alpha_{12}^{-1}p+I\Big)m_1\quad\text{and}\quad
\Big[-a \alpha_{21}+(q+a\alpha_{21})\Big(\frac{1}{1-a}\alpha_{12}^{-1}p+I\Big)\Big] m_1=0.
\end{equation}
Thus for all $\sigma>0$:
\begin{enumerate}[(1)]
    \item If $-a \alpha_{21}+(q+a\alpha_{21})\Big(\frac{1}{1-a}\alpha_{12}^{-1}p+I\Big)$ is non-singular, then \eqref{eq: m-system-reduced} has a unique solution $m_1=m_2=0$. In this case, the coupled McKean-Vlasov diffusions has a unique equilibrium where both $\mu$ and $\nu$ are Gaussian measures
    \begin{align*}
    &\mu(x)\propto \exp\Big(-\frac{1}{\sigma^2}x^T(p+a \alpha_{11}+(1-a)\alpha_{12})x\Big),
    \\&\nu(x)\propto \exp\Big(-\frac{1}{\sigma^2}x^T(q+a \alpha_{21}+(1-a)\alpha_{22})x\Big).
    \end{align*}
In particular, this is the case when $\alpha_{21}=\alpha_{12}$.
    \item If $-a \alpha_{21}+(q+a\alpha_{21})\Big(\frac{1}{1-a}\alpha_{12}^{-1}p+I\Big)$ is singular, then \eqref{eq: m-system-reduced} has infinitely number of solutions. Thus the coupled McKean-Vlasov diffusions has infinitely number of equilibria (including the above Gaussian measures). 
    \end{enumerate}
It is interesting to note that the condition does not depend on $\alpha_{11}$ and $\alpha_{22}$, that is, it does not depend on self-interacting potentials.

%
%
\section{A Technical Lemma Used in the Proof of Theorem~\ref{chichi}}
\label{app:converg}

In this appendix, we present the following proposition, which is used several times in the proof of Theorem~\ref{chichi} to construct elements of $\mathcal{S}_\sigma$ satisfying some hypotheses.
\begin{prop}
\label{prop:a:othermeasure}
Let the assumptions of Theorem~\ref{chichi} hold. We also assume the existence of two polynomial functions $\mathcal{P}$ and $\mathcal{Q}$, a smooth function $\varphi^\ell$ from $\bRb^d$ to $\bRb$ with compact support such that
$\left|\varphi^\ell(x)\right|\leq\mathcal{P}\left(|x|\right)$ and
$\left|\nabla\varphi^\ell(x)\right|^2\leq\mathcal{Q}\left(|x|\right)$, $\kappa>0$ and two sequences $(r_k)_{k\in\mathbb{N}}$ and $(s_k)_{k\in\mathbb{N}}$ which go to infinity such that
for all $r_k\leq t\leq s_k<r_{k+1}$, we have
\begin{equation*}
\kappa=\int_{\bRb^d}\varphi^\ell(x)\mu_{r_k}^\ell(x)\ddx\leq\int_{\bRb^d}\varphi^\ell(x)\mu_t^\ell(x)\ddx\leq\int_{\bRb^d}\varphi^\ell(x)\mu_{s_k}^\ell(x)\ddx=2\kappa\,.
\end{equation*}
Thus, there exists a stationary state $(\nu^1,\ldots,\nu^M)\in\mathcal{A}_\sigma\bigcap\mathcal{S}_\sigma$ that satisfies $\int_{\bRb^d}\varphi^\ell(x)\nu^\ell(x)\ddx\in[\kappa;2\kappa]$. Moreover, $\displaystyle\Upsilon_\sigma(\nu^1,\ldots,v^M)=\lim_{t\to+\infty}\Upsilon_\sigma(\mu_t^1,\ldots,\mu_t^M)$.
\end{prop}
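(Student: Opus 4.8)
The strategy is the one underlying the convergence arguments of~\cite{JOTP,AOP}: inside each window $[r_k,s_k]$ we locate a time $\tau_k$ at which the entropy dissipation is small, pass to a weak limit along $\{\tau_k\}$, identify that limit as a stationary state, and read off the value of $\int\varphi^\ell\,\cdot\,$ and of the free energy. Throughout we work under the assumptions of Theorem~\ref{chichi}, so that the uniform-in-time moment bounds of Theorem~\ref{thm: well-posedness} are available, $\xi(t):=\Upsilon_\sigma(\mu_t^1,\dots,\mu_t^M)$ is non-increasing and bounded below (Section~\ref{subsec:free_energy}), and the families $\{\mu_t^\ell;t\ge0\}$ are tight.

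\textbf{Step 1: a uniform bound on the speed of $t\mapsto\int\varphi^\ell\mu_t^\ell$.} Testing the PDE~\eqref{eq: multi-species PDE} against $\varphi^\ell$ and integrating by parts gives
\[
\frac{\dd}{\ddt}\int_{\bRb^d}\varphi^\ell\mu_t^\ell\,\ddx=-\int_{\bRb^d}\nabla\varphi^\ell\cdot\Big(\nabla V_\ell+\sum_{j=1}^Ma_j\nabla F_{\ell j}\ast\mu_t^j\Big)\mu_t^\ell\,\ddx+\frac{\sigma_\ell^2}{2}\int_{\bRb^d}\Delta\varphi^\ell\,\mu_t^\ell\,\ddx.
\]
Since $\varphi^\ell$ is smooth with compact support, $\nabla V_\ell$ is locally bounded, and $\nabla F_{\ell j}\ast\mu_t^j$ is affine in $x$ with a time-uniformly bounded mean $m_j(t)$, the right-hand side is bounded by a constant $C>0$ independent of $t$. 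Consequently the oscillation hypothesis $\int\varphi^\ell\mu_{r_k}^\ell=\kappa$, $\int\varphi^\ell\mu_{s_k}^\ell=2\kappa$ forces $s_k-r_k\ge\kappa/C>0$ for every $k$. (The polynomial majorants $\mathcal{P},\mathcal{Q}$ in the statement are exactly what makes this estimate uniform when, in the applications, $\varphi^\ell$ is a truncation of a monomial $x^q$.)

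\textbf{Step 2: a good time inside the window.} From $\int_0^\infty(-\dot\xi)\,\ddt<\infty$ we get $\xi(r_k)-\xi(s_k)=\int_{r_k}^{s_k}(-\dot\xi)\,\ddt\to0$, so by the mean value inequality there is $\tau_k\in[r_k,s_k]$ with
\[
0\le-\dot\xi(\tau_k)\le\frac{1}{s_k-r_k}\int_{r_k}^{s_k}(-\dot\xi)\,\ddt\le\frac{C}{\kappa}\big(\xi(r_k)-\xi(s_k)\big)\longrightarrow 0 ,
\]
and moreover $\tau_k\to+\infty$ with $\int\varphi^\ell\mu_{\tau_k}^\ell\in[\kappa,2\kappa]$ by hypothesis. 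By tightness we extract a subsequence along which $\mu_{\tau_k}^\ell\rightharpoonup\nu^\ell$ for every $\ell$, so $(\nu^1,\dots,\nu^M)\in\mathcal{A}_\sigma$. Running verbatim the test-function and Weyl's-lemma argument from the proof of Proposition~\ref{thm:fr:subconv} (Steps 3--4 there, which used precisely $\dot\xi(\tau_k)\to0$) shows that each $\nu^\ell$ satisfies~\eqref{eq: muk}, hence $(\nu^1,\dots,\nu^M)\in\mathcal{S}_\sigma$. Since $\varphi^\ell$ is bounded and continuous, weak convergence yields $\int\varphi^\ell\nu^\ell=\lim_k\int\varphi^\ell\mu_{\tau_k}^\ell\in[\kappa,2\kappa]$, and since $\{\tau_k\}$ satisfies the hypotheses of Proposition~\ref{jigsaw}, we conclude $\Upsilon_\sigma(\nu^1,\dots,\nu^M)=L_\sigma=\lim_{t\to+\infty}\Upsilon_\sigma(\mu_t^1,\dots,\mu_t^M)$.

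\textbf{Main obstacle.} Once Propositions~\ref{thm:fr:subconv} and~\ref{jigsaw} are in hand, the only genuinely new point is the lower bound $s_k-r_k\ge\kappa/C$ in Step 1: it is what converts the mere summability of the entropy dissipation into the existence of a time $\tau_k$ with $\dot\xi(\tau_k)\to0$ that still lies in the prescribed window, and it hinges on a uniform-in-time control of the drift of $t\mapsto\int\varphi^\ell\mu_t^\ell$, i.e.\ on the uniform moment bounds together with the (polynomial) growth of the forces. Everything else is a tightness/weak-convergence bookkeeping that reuses machinery already established earlier in the section.
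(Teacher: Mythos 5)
Your proof is correct and follows the same global strategy as the paper (find a time inside each window where the entropy dissipation is small, pass to a weak limit, identify a stationary state, invoke Proposition~\ref{jigsaw}); the genuine difference is in how the lower bound on $s_k-r_k$ is obtained. The paper bounds $|\Phi'(t)|$ by Cauchy--Schwarz against the entropy dissipation,
$|\Phi'(t)|\le \big(\int|\nabla\varphi^\ell|^2\mu_t^\ell\big)^{1/2}\big(-\dot\xi(t)/a_\ell\big)^{1/2}$,
and then a second Cauchy--Schwarz in time gives $C\sqrt{s_k-r_k}\sqrt{\xi(r_k)-\xi(s_k)}\ge\kappa$, hence $s_k-r_k\to+\infty$ because $\xi(r_k)-\xi(s_k)\to0$. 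You instead bound $|\Phi'(t)|\le C$ directly from the PDE, compact support, and uniform moments, obtaining $s_k-r_k\ge\kappa/C>0$; both are enough for Step 2. Two remarks on the trade-off: the paper's route is the one that actually \emph{uses} the polynomial majorants $\mathcal{P},\mathcal{Q}$ of the hypothesis --- the bound $\int|\nabla\varphi^\ell|^2\mu_t^\ell\le\int\mathcal{Q}(|x|)\mu_t^\ell$ is controlled by the uniform moment bounds and is therefore uniform over any family of test functions sharing the same majorant $\mathcal{Q}$ (e.g.\ the truncations of $x^q$ used in Subsections~\ref{tomhanks} and~\ref{tomhanks3}). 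Your direct bound depends on $\|\nabla\varphi^\ell\|_\infty$ and $\|\Delta\varphi^\ell\|_\infty$, which grow with the truncation radius $R$, so your parenthetical claim that $\mathcal{P},\mathcal{Q}$ are ``exactly what makes this estimate uniform'' is not accurate for the argument you actually run --- your estimate never uses them. This does not affect the validity of the proposition (it is stated for a single fixed $\varphi^\ell$), but it does mean your version of Step~1 would not transfer a constant uniformly to the family $\{\varphi^\ell_R\}$; the paper's version would.
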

\begin{proof}

{\bf Step 1.} We begin to prove that
$\displaystyle\liminf_{k\longrightarrow+\infty}\left(s_k-r_k\right)>0$. We
introduce the function
\begin{equation*}
\Phi(t):=\int_{\bRb^d}\varphi^\ell(x)\mu_t^\ell(x)\ddx\,.
\end{equation*}
This function is well defined, as $\left|\varphi^\ell(x)\right|$ is bounded by $\mathcal{P}\left|x|\right)$. The derivation of $\Phi$ and an integration by parts lead to
\begin{align*}
\Phi'(t)&=-\int_{\bRb^q}\left\langle\nabla\varphi^\ell(x)\,;\,\frac{\sigma_\ell^2}{2}\nabla \mu_t^\ell(x)+\mu_t^\ell(x)\left(\nabla V_\ell(x)+\sum_{j=1}^Ma_j\nabla F_{\ell j}\ast \mu_t^j(x)\right)\right\rangle\ddx\\
&=-\int_{\bRb^d}\left\langle\nabla\varphi^\ell(x)\,;\,\eta_t^\ell(x)\right\rangle\mu_t^\ell(x)\ddx\,.
\end{align*}
The Cauchy-Schwarz inequality implies
\begin{equation*}
\left|\Phi'(t)\right|\leq\sqrt{-\frac{1}{a_\ell}\xi'(t)}\sqrt{\int_{\bRb^d}\left|\nabla\varphi^\ell(x)\right|^2\mu_t^\ell(x)\ddx}\,,
\end{equation*}
by reminding that $\xi(t):=\Upsilon_\sigma\left(\mu_t^1,\ldots,\mu_t^M\right)$. The
quantity $\left|\nabla\varphi^\ell(x)\right|^2$ is bounded by $\mathcal{Q}\left(|x|\right)$ and
$\int_{\bRb^d}|x|^{2N_0}\mu_t(x)\ddx$ is uniformly bounded with respect to $t\geq1$ for all $N_0\in\mathbb{N}$. So, there exists $C>0$ such that
$\int_{\bRb^d}\left|\nabla\varphi^\ell(x)\right|^2\mu_t^\ell(x)\ddx\leq C^2$ for all
$t\geq1$. We deduce
\begin{equation}
\label{eq:a:majoration}
\left|\Phi'(t)\right|\leq C\sqrt{\left|\xi'(t)\right|}\,.
\end{equation}
By definition of the two sequences $(r_k)_{k\in\mathbb{N}}$ and $(s_k)_{k\in\mathbb{N}}$, we have
\begin{equation*}
\Phi(s_k)-\Phi(r_k)=\kappa\,.
\end{equation*}
Combining this identity with Inequality \eqref{eq:a:majoration} and the monotonicity of $\xi$ yields
\begin{equation*}
C\int_{r_k}^{s_k}\sqrt{-\xi'(t)}\ddt\geq\kappa\,.
\end{equation*}
We apply the Cauchy-Schwarz inequality and we obtain
\begin{equation*}
C\sqrt{s_k-r_k}\sqrt{\xi(r_k)-\xi(s_k)}\geq\kappa\,.
\end{equation*}
Moreover, $\xi(t)$ converges as $t$ goes to infinity (see Lemma \ref{lem:fr:conv}). It implies the convergence of $\xi(r_k)-\xi(s_k)$ to $0$ when $k$ goes to infinity. Consequently, $s_k-r_k$ converges to infinity so
\begin{equation*}
\liminf_{k\longrightarrow+\infty}s_k-r_k>0\,.
\end{equation*}

\noindent{}{\bf Step 2.} By Lemma \ref{lem:fr:conv}, $\Upsilon_\sigma(\mu_t)-L_\sigma=-\int_t^\infty\xi'(s)\dd s$ converges to $0$. As $\xi'$ is nonpositive, we deduce that $\sum_{k=N}^\infty\int_{r_k}^{s_k}\xi'(s)\dd s$ also converges to $0$ when $N$ goes to infinity. As $\displaystyle\liminf_{k\longrightarrow+\infty}s_k-r_k>0$, we deduce the existence of an increasing sequence $q_k\in[r_k;s_k]$ which goes to infinity and such that $\xi'\left(q_k\right)$ converges to $0$ as $k$ goes to infinity. Furthermore, $\int_{\bRb^d}\varphi^\ell(x)\mu_{q_k}^\ell(x)\ddx\in[\kappa;2\kappa]$, for all $k\in\mathbb{N}$.

\noindent{}{\bf Step 3.} By proceeding similarly to the proof of \cite[Theorem 2.7]{JOTP}, we extract a subsequence of $(q_k)_{k\in\mathbb{N}}$ (we continue to write it $q_k$ to simplify the reading) such that $(\mu_{q_k}^1,\ldots,\mu_{q_k}^M)$ weakly converges to a stationary state $(\nu^1,\ldots,\nu^M)$ such that $\int_{\bRb^d}\varphi^\ell(x)\nu^\ell(x)\ddx\in[\kappa;2\kappa]$.

\noindent{}{\bf Step 4.} Since $\xi'(q_k)$ goes to $0$ as $k$ goes to infinity, we can apply Proposition~\ref{jigsaw}. Thus, we deduce $\Upsilon_\sigma(\nu^1,\ldots,\nu^M)=L_\sigma:=\lim_{t\to+\infty}\Upsilon_\sigma(\mu_t^1,\ldots,\mu_t^M)$.
\end{proof}

\bibliographystyle{alphaabbr}
\bibliography{refs}

@book{stroock2007multidimensional,
  title={Multidimensional diffusion processes},
  author={Stroock, Daniel W and Varadhan, SR Srinivasa},
  year={2007},
  publisher={Springer}
}

@article{allesina2015stability,
  title={The stability--complexity relationship at age 40: a random matrix perspective},
  author={Allesina, Stefano and Tang, Si},
  journal={Population Ecology},
  volume={57},
  number={1},
  pages={63--75},
  year={2015},
  publisher={Wiley Online Library}
}

@article{benachour1998nonlinear,
  title={Nonlinear self-stabilizing processes--I Existence, invariant probability, propagation of chaos},
  author={Benachour, Sa{\i}d and Roynette, Bernard and Talay, Denis and Vallois, Pierre},
  journal={Stochastic processes and their applications},
  volume={75},
  number={2},
  pages={173--201},
  year={1998},
  publisher={Elsevier}
}

@article{cattiaux2008,
  title={Probabilistic approach for granular media equations in the non-uniformly convex case},
  author={Cattiaux, Patrick and Guillin, Arnaud and Malrieu, Florent},
  journal={Probability theory and related fields},
  volume={140},
  pages={19--40},
  year={2008},
  publisher={Springer}
}

@article{potts2019spatial,
  title={Spatial memory and taxis-driven pattern formation in model ecosystems},
  author={Potts, Jonathan R and Lewis, Mark A},
  journal={Bulletin of mathematical biology},
  volume={81},
  pages={2725--2747},
  year={2019},
  publisher={Springer}
}

@article {Tamura1984,
    AUTHOR = {Tamura, Y.},
     TITLE = {On asymptotic behaviors of the solution of a nonlinear
              diffusion equation},
   JOURNAL = {J. Fac. Sci. Univ. Tokyo Sect. IA Math.},
  FJOURNAL = {Journal of the Faculty of Science. University of Tokyo.
              Section IA. Mathematics},
    VOLUME = {31},
      YEAR = {1984},
    NUMBER = {1},
     PAGES = {195--221},
      ISSN = {0040-8980},
   MRCLASS = {60K35 (58G32 60J60)},
  MRNUMBER = {743525},
MRREVIEWER = {S. R. S. Varadhan},
}

@article{carrillo2020long,
  title={Long-time behaviour and phase transitions for the {M}c{K}ean--{V}lasov equation on the torus},
  author={Carrillo, Jose Antonio and Gvalani, Rishabh S and Pavliotis, Grigorios A and Schlichting, Andre},
  journal={Archive for Rational Mechanics and Analysis},
  volume={235},
  number={1},
  pages={635--690},
  year={2020},
  publisher={Springer}
}

@article{delgadino2021diffusive,
  title={On the diffusive-mean field limit for weakly interacting diffusions exhibiting phase transitions},
  author={Delgadino, Matias G and Gvalani, Rishabh S and Pavliotis, Grigorios A},
  journal={Archive for Rational Mechanics and Analysis},
  volume={241},
  pages={91--148},
  year={2021},
  publisher={Springer}
}

@article{yozo1987free,
  title={Free energy and the convergence of distributions of diffusion processes of McKean type},
  author={Tamura, Yozo},
  journal={Journal of the Faculty of Science, the University of Tokyo. Sect. 1 A, Mathematics},
  volume={34},
  number={2},
  pages={443--484},
  year={1987},
  publisher={Faculty of Science, The University of Tokyo}
}

@article{carrillo2025propagation,
  title={Propagation of chaos for multi-species moderately interacting particle systems up to Newtonian singularity},
  author={Carrillo, Jos{\'e} Antonio and Guo, Shuchen and Holzinger, Alexandra},
  journal={arXiv preprint arXiv:2501.03087},
  year={2025}
}

@article{jabin2014review,
  title={A review of the mean field limits for {V}lasov equations},
  author={Jabin, Pierre-Emmanuel},
  journal={Kinetic and Related models},
  volume={7},
  number={4},
  pages={661--711},
  year={2014},
  publisher={Kinetic and Related Models}
}

@article{panchenko2015free,
  title={THE FREE ENERGY IN A MULTI-SPECIES SHERRINGTON-KIRKPATRICK MODEL},
  author={Panchenko, Dmitry},
  journal={The Annals of Probability},
  pages={3494--3513},
  year={2015},
  publisher={JSTOR}
}

@article{mckean1967propagation,
  title={Propagation of chaos for a class of non-linear parabolic equations},
  author={McKean, Henry P},
  journal={Stochastic Differential Equations (Lecture Series in Differential Equations, Session 7, Catholic Univ., 1967)},
  pages={41--57},
  year={1967}
}

@article{sznitman1991topics,
  title={Topics in propagation of chaos},
  author={Sznitman, Alain-Sol},
  journal={Ecole d’{\'e}t{\'e} de probabilit{\'e}s de Saint-Flour XIX—1989},
  volume={1464},
  pages={165--251},
  year={1991},
  publisher={Springer}
}

@article{clifford1973model,
  title={A model for spatial conflict},
  author={Clifford, Peter and Sudbury, Aidan},
  journal={Biometrika},
  volume={60},
  number={3},
  pages={581--588},
  year={1973},
  publisher={Oxford University Press}
}

@article{garnier2017consensus,
  title={Consensus convergence with stochastic effects},
  author={Garnier, Josselin and Papanicolaou, George and Yang, Tzu-Wei},
  journal={Vietnam Journal of Mathematics},
  volume={45},
  number={1},
  pages={51--75},
  year={2017},
  publisher={Springer}
}

@article{jabin2017mean,
  title={Mean field limit for stochastic particle systems},
  author={Jabin, Pierre-Emmanuel and Wang, Zhenfu},
  journal={Active Particles, Volume 1: Advances in Theory, Models, and Applications},
  pages={379--402},
  year={2017},
  publisher={Springer}
}

@article{chen2019rigorous,
  title={Rigorous mean-field limit and cross-diffusion},
  author={Chen, Li and Daus, Esther S and J{\"u}ngel, Ansgar},
  journal={Zeitschrift f{\"u}r angewandte Mathematik und Physik},
  volume={70},
  pages={1--21},
  year={2019},
  publisher={Springer}
}

@inproceedings{serfaty2018systems,
  title={Systems of points with Coulomb interactions},
  author={Serfaty, Sylvia},
  booktitle={2018 International Congress of Mathematicians, ICM 2018},
  pages={935--978},
  year={2018},
  organization={World Scientific Publishing Co. Pte Ltd}
}

@incollection{golse2016dynamics,
  title={On the dynamics of large particle systems in the mean field limit},
  author={Golse, Fran{\c{c}}ois},
  booktitle={Macroscopic and large scale phenomena: coarse graining, mean field limits and ergodicity},
  pages={1--144},
  year={2016},
  publisher={Springer}
}

@inproceedings{barra2015multi,
  title={Multi-species mean field spin glasses. Rigorous results},
  author={Barra, Adriano and Contucci, Pierluigi and Mingione, Emanuele and Tantari, Daniele},
  booktitle={Annales Henri Poincar{\'e}},
  volume={16},
  pages={691--708},
  year={2015},
  organization={Springer}
}

@article{may1972will,
  title={Will a large complex system be stable?},
  author={May, Robert M},
  journal={Nature},
  volume={238},
  number={5364},
  pages={413--414},
  year={1972},
  publisher={Nature Publishing Group UK London}
}

@article{wolansky2002multi,
  title={Multi-components chemotactic system in the absence of conflicts},
  author={Wolansky, Gershon},
  journal={European Journal of Applied Mathematics},
  volume={13},
  number={6},
  pages={641--661},
  year={2002},
  publisher={Cambridge University Press}
}

@article{Lin_2024, 
title={The fully parabolic multi-species chemotaxis system in $\mathbb{R}^{2}$}, 
DOI={10.1017/S0956792523000372}, 
journal={European Journal of Applied Mathematics}, 
author={Lin, Ke}, 
year={2024}, 
pages={1–32}}

@misc{he2021multispecies,
      title={Multi-species Patlak-Keller-Segel system}, 
      author={Siming He and Eitan Tadmor},
      year={2021},
      eprint={1903.02673},
      archivePrefix={arXiv},
      primaryClass={math.AP}
}

@article{lacker2023sharp,
  title={Sharp uniform-in-time propagation of chaos},
  author={Lacker, Daniel and Le Flem, Luc},
  journal={Probability Theory and Related Fields},
  volume={187},
  number={1-2},
  pages={443--480},
  year={2023},
  publisher={Springer}
}

@article{CAUVINVILA2024,
title = {Stationary solutions and large time asymptotics to a cross-diffusion-Cahn–Hilliard system},
journal = {Nonlinear Analysis},
volume = {241},
pages = {113482},
year = {2024},
issn = {0362-546X},
doi = {https://doi.org/10.1016/j.na.2024.113482},
url = {https://www.sciencedirect.com/science/article/pii/S0362546X24000014},
author = {Jean Cauvin-Vila and Virginie Ehrlacher and Greta Marino and Jan-Frederik Pietschmann},
keywords = {Cahn–Hilliard, Cross-diffusion, Degenerate Ginzburg–Landau, Entropy method, Finite volume},
abstract = {We study some properties of a multi-species degenerate Ginzburg–Landau energy and its relation to a cross-diffusion Cahn–Hilliard system. The model is motivated by multicomponent mixtures where cross-diffusion effects between the different species are taken into account, and where only one species does separate from the others. Using a comparison argument, we obtain strict bounds on the minimizers from which we can derive first-order optimality conditions, revealing a link with the single-species energy, and providing enough regularity to qualify the minimizers as stationary solutions of the evolution system. We also discuss convexity properties of the energy as well as long time asymptotics of the time-dependent problem. Lastly, we introduce a structure-preserving finite volume scheme for the time-dependent problem and present several numerical experiments in one and two spatial dimensions.}
}

@misc{karmakar2020critical,
      title={On the critical mass Patlak-Keller-Segel system for multi-species populations: global existence and infinite time aggregation}, 
      author={Debabrata Karmakar and Gershon Wolansky},
      year={2020},
      eprint={2004.10132},
      archivePrefix={arXiv},
      primaryClass={math.AP}
}

@article{Lin2024b,
title = {Global existence and blow-up in higher-dimensional Patlak-Keller-Segel system for multi populations},
journal = {Journal of Mathematical Analysis and Applications},
volume = {531},
number = {2, Part 1},
pages = {127831},
year = {2024},
issn = {0022-247X},
doi = {https://doi.org/10.1016/j.jmaa.2023.127831},
url = {https://www.sciencedirect.com/science/article/pii/S0022247X2300834X},
author = {Ke Lin and Rong Zeng},
keywords = {Degenerate parabolic system, Chemotaxis, Global existence, Blow-up},
abstract = {This work is concerned with a degenerate parabolic-elliptic Patlak-Keller-Segel system for multi populations in space dimensions d≥3. We first provide a sufficient condition for the global existence of weak solution to the Cauchy problem. Then the global results are obtained both for any large initial data in the sub-critical case and for small initial data in the super-critical case. Finally, the finite-time blow-up solutions are constructed for large initial data in the super-critical case.}
}

@article{hecht2023multispecies,
  title={Multispecies cross-diffusions: from a nonlocal mean-field to a porous medium system without self-diffusion},
  author={Hecht, Sophie and Doumic, Marie and Perthame, Benoit and Peurichard, Diane},
  journal={arXiv preprint arXiv:2306.01777},
  year={2023}
}

@article{dressler1987stationary,
  title={Stationary solutions of the {V}lasov-{F}okker-{P}lanck equation},
  author={Dressler, Klaus and Neunzert, H},
  journal={Mathematical methods in the applied sciences},
  volume={9},
  number={1},
  pages={169--176},
  year={1987},
  publisher={Wiley Online Library}
}

@article{duong2016stationary,
  title={Stationary solutions of the {V}lasov--{F}okker--{P}lanck equation: Existence, characterization and phase-transition},
  author={Duong, Manh Hong and Tugaut, Julian},
  journal={Applied Mathematics Letters},
  volume={52},
  pages={38--45},
  year={2016},
  publisher={Elsevier}
}

@article{carlier2016remarks,
  title={Remarks on continuity equations with nonlinear diffusion and nonlocal drifts},
  author={Carlier, Guillaume and Laborde, Maxime},
  journal={Journal of Mathematical Analysis and Applications},
  volume={444},
  number={2},
  pages={1690--1702},
  year={2016},
  publisher={Elsevier}
}

@article{jungel2022nonlocal,
  title={Nonlocal cross-diffusion systems for multi-species populations and networks},
  author={J{\"u}ngel, Ansgar and Portisch, Stefan and Zurek, Antoine},
  journal={Nonlinear Analysis},
  volume={219},
  pages={112800},
  year={2022},
  publisher={Elsevier}
}

@article{giunta2022local,
  title={Local and global existence for nonlocal multispecies advection-diffusion models},
  author={Giunta, Valeria and Hillen, Thomas and Lewis, Mark and Potts, Jonathan R},
  journal={SIAM Journal on Applied Dynamical Systems},
  volume={21},
  number={3},
  pages={1686--1708},
  year={2022},
  publisher={SIAM}
}

@article{giunta2022detecting,
  title={Detecting minimum energy states and multi-stability in nonlocal advection--diffusion models for interacting species},
  author={Giunta, Valeria and Hillen, Thomas and Lewis, Mark A and Potts, Jonathan R},
  journal={Journal of Mathematical Biology},
  volume={85},
  number={5},
  pages={56},
  year={2022},
  publisher={Springer}
}

@article{chen2021rigorous,
  title={Rigorous derivation of population cross-diffusion systems from moderately interacting particle systems},
  author={Chen, Li and Daus, Esther S and Holzinger, Alexandra and J{\"u}ngel, Ansgar},
  journal={Journal of Nonlinear Science},
  volume={31},
  pages={1--38},
  year={2021},
  publisher={Springer}
}

@article{DuongTugaut2020,
  title={Coupled McKean--Vlasov diffusions: wellposedness, propagation of chaos and invariant measures},
  author={Duong, Manh Hong and Tugaut, Julian},
  journal={Stochastics},
  volume={92},
  number={6},
  pages={900--943},
  year={2020},
  publisher={Taylor \& Francis}
}

@article {HT1,
    AUTHOR = {Herrmann, S. and Tugaut, J.},
     TITLE = {Non-uniqueness of stationary measures for self-stabilizing
              processes},
   JOURNAL = {Stochastic Process. Appl.},
  FJOURNAL = {Stochastic Processes and their Applications},
    VOLUME = {120},
      YEAR = {2010},
    NUMBER = {7},
     PAGES = {1215--1246},
      ISSN = {0304-4149,1879-209X},
   MRCLASS = {60H10 (47N30 60G10 60J60)},
  MRNUMBER = {2639745},
MRREVIEWER = {Victoria\ Knopova},
       DOI = {10.1016/j.spa.2010.03.009},
       URL = {https://doi.org/10.1016/j.spa.2010.03.009},
}

@article {AOP,
    AUTHOR = {Tugaut, Julian},
     TITLE = {Convergence to the equilibria for self-stabilizing processes
              in double-well landscape},
   JOURNAL = {Ann. Probab.},
  FJOURNAL = {The Annals of Probability},
    VOLUME = {41},
      YEAR = {2013},
    NUMBER = {3A},
     PAGES = {1427--1460},
      ISSN = {0091-1798,2168-894X},
   MRCLASS = {60H10 (35B40 35K55 60B10 60J60)},
  MRNUMBER = {3098681},
MRREVIEWER = {Wilhelm\ Stannat},
       DOI = {10.1214/12-AOP749},
       URL = {https://doi.org/10.1214/12-AOP749},
}

@article {SPA,
    AUTHOR = {Tugaut, Julian},
     TITLE = {Self-stabilizing processes in multi-wells landscape in
              {$\bRb^d$}-convergence},
   JOURNAL = {Stochastic Process. Appl.},
  FJOURNAL = {Stochastic Processes and their Applications},
    VOLUME = {123},
      YEAR = {2013},
    NUMBER = {5},
     PAGES = {1780--1801},
      ISSN = {0304-4149,1879-209X},
   MRCLASS = {60J60 (35K55 60H10 60K35)},
  MRNUMBER = {3027901},
       DOI = {10.1016/j.spa.2012.12.003},
       URL = {https://doi.org/10.1016/j.spa.2012.12.003},
}

@article {HT2,
    AUTHOR = {Herrmann, S. and Tugaut, J.},
     TITLE = {Stationary measures for self-stabilizing processes: asymptotic
              analysis in the small noise limit},
   JOURNAL = {Electron. J. Probab.},
  FJOURNAL = {Electronic Journal of Probability},
    VOLUME = {15},
      YEAR = {2010},
     PAGES = {no. 69, 2087--2116},
      ISSN = {1083-6489},
   MRCLASS = {60H10 (60G10 60J60)},
  MRNUMBER = {2745727},
MRREVIEWER = {Josep\ Vives},
       DOI = {10.1214/EJP.v15-842},
       URL = {https://doi.org/10.1214/EJP.v15-842},
}

@article {Dawson83,
    AUTHOR = {Dawson, Donald A.},
     TITLE = {Critical dynamics and fluctuations for a mean-field model of
              cooperative behavior},
   JOURNAL = {J. Statist. Phys.},
  FJOURNAL = {Journal of Statistical Physics},
    VOLUME = {31},
      YEAR = {1983},
    NUMBER = {1},
     PAGES = {29--85},
      ISSN = {0022-4715,1572-9613},
   MRCLASS = {82A25 (60K35)},
  MRNUMBER = {711469},
MRREVIEWER = {Ronald\ F.\ Fox},
       DOI = {10.1007/BF01010922},
       URL = {https://doi.org/10.1007/BF01010922},
}

@article {PT,
    AUTHOR = {Tugaut, Julian},
     TITLE = {Phase transitions of {M}c{K}ean-{V}lasov processes in
              double-wells landscape},
   JOURNAL = {Stochastics},
  FJOURNAL = {Stochastics. An International Journal of Probability and
              Stochastic Processes},
    VOLUME = {86},
      YEAR = {2014},
    NUMBER = {2},
     PAGES = {257--284},
      ISSN = {1744-2508,1744-2516},
   MRCLASS = {60H10 (60G10 60J60 65C05 82C22)},
  MRNUMBER = {3180036},
MRREVIEWER = {Patr\'{\i}cia\ Gon\c{c}alves},
       DOI = {10.1080/17442508.2013.775287},
       URL = {https://doi.org/10.1080/17442508.2013.775287},
}

@article {BFG1,
    AUTHOR = {Berglund, Nils and Fernandez, Bastien and Gentz, Barbara},
     TITLE = {Metastability in interacting nonlinear stochastic differential
              equations. {I}. {F}rom weak coupling to synchronization},
   JOURNAL = {Nonlinearity},
  FJOURNAL = {Nonlinearity},
    VOLUME = {20},
      YEAR = {2007},
    NUMBER = {11},
     PAGES = {2551--2581},
      ISSN = {0951-7715,1361-6544},
   MRCLASS = {60K35 (37H10 37L60 60H10)},
  MRNUMBER = {2361246},
MRREVIEWER = {Dirk\ Bl\"{o}mker},
       DOI = {10.1088/0951-7715/20/11/006},
       URL = {https://doi.org/10.1088/0951-7715/20/11/006},
}

@article {BFG2,
    AUTHOR = {Berglund, Nils and Fernandez, Bastien and Gentz, Barbara},
     TITLE = {Metastability in interacting nonlinear stochastic differential
              equations. {II}. {L}arge-{$N$} behaviour},
   JOURNAL = {Nonlinearity},
  FJOURNAL = {Nonlinearity},
    VOLUME = {20},
      YEAR = {2007},
    NUMBER = {11},
     PAGES = {2583--2614},
      ISSN = {0951-7715,1361-6544},
   MRCLASS = {60K35 (37H20 37L60 60H10)},
  MRNUMBER = {2361247},
MRREVIEWER = {Dirk\ Bl\"{o}mker},
       DOI = {10.1088/0951-7715/20/11/007},
       URL = {https://doi.org/10.1088/0951-7715/20/11/007},
}

@article {JOTP,
    AUTHOR = {Tugaut, Julian},
     TITLE = {Self-stabilizing processes in multi-wells landscape in
              {$\mathbb{R}^d$}-invariant probabilities},
   JOURNAL = {J. Theoret. Probab.},
  FJOURNAL = {Journal of Theoretical Probability},
    VOLUME = {27},
      YEAR = {2014},
    NUMBER = {1},
     PAGES = {57--79},
      ISSN = {0894-9840,1572-9230},
   MRCLASS = {60H10 (35K55 35R60 60G10 60J60)},
  MRNUMBER = {3174216},
MRREVIEWER = {Wenqing\ Hu},
       DOI = {10.1007/s10959-012-0435-2},
       URL = {https://doi.org/10.1007/s10959-012-0435-2},
}

@article {HT3,
    AUTHOR = {Herrmann, Samuel and Tugaut, Julian},
     TITLE = {Self-stabilizing processes: uniqueness problem for stationary
              measures and convergence rate in the small-noise limit},
   JOURNAL = {ESAIM Probab. Stat.},
  FJOURNAL = {ESAIM. Probability and Statistics},
    VOLUME = {16},
      YEAR = {2012},
     PAGES = {277--305},
      ISSN = {1292-8100,1262-3318},
   MRCLASS = {60J60 (60H10)},
  MRNUMBER = {2956576},
MRREVIEWER = {Fabrice\ Baudoin},
       DOI = {10.1051/ps/2011152},
       URL = {https://doi.org/10.1051/ps/2011152},
}

@article {BCCP,
    AUTHOR = {Benedetto, D. and Caglioti, E. and Carrillo, J. A. and
              Pulvirenti, M.},
     TITLE = {A non-{M}axwellian steady distribution for one-dimensional
              granular media},
   JOURNAL = {J. Statist. Phys.},
  FJOURNAL = {Journal of Statistical Physics},
    VOLUME = {91},
      YEAR = {1998},
    NUMBER = {5-6},
     PAGES = {979--990},
      ISSN = {0022-4715,1572-9613},
   MRCLASS = {82C35 (74E20 82C40)},
  MRNUMBER = {1637274},
       DOI = {10.1023/A:1023032000560},
       URL = {https://doi.org/10.1023/A:1023032000560},
}

@article {CMV,
    AUTHOR = {Carrillo, Jos\'e{} A. and McCann, Robert J. and Villani,
              C\'edric},
     TITLE = {Kinetic equilibration rates for granular media and related
              equations: entropy dissipation and mass transportation
              estimates},
   JOURNAL = {Rev. Mat. Iberoamericana},
  FJOURNAL = {Revista Matem\'atica Iberoamericana},
    VOLUME = {19},
      YEAR = {2003},
    NUMBER = {3},
     PAGES = {971--1018},
      ISSN = {0213-2230},
   MRCLASS = {35K55 (35B40 35K65 76T25)},
  MRNUMBER = {2053570},
MRREVIEWER = {Thomas\ P.\ Witelski},
       DOI = {10.4171/RMI/376},
       URL = {https://doi.org/10.4171/RMI/376},
}

@article{PhysRevResearch.5.013078,
  title = {Dimension reduction of noisy interacting systems},
  author = {Zagli, N. and Pavliotis, G. A. and Lucarini, V. and Alecio, A.},
  journal = {Phys. Rev. Res.},
  volume = {5},
  issue = {1},
  pages = {013078},
  numpages = {13},
  year = {2023},
  month = {Feb},
  publisher = {American Physical Society},
  doi = {10.1103/PhysRevResearch.5.013078},
  url = {https://link.aps.org/doi/10.1103/PhysRevResearch.5.013078}
}

@article {PZ2024,
    AUTHOR = {Pavliotis, Grigorios A. and Zanoni, Andrea},
     TITLE = {A method of moments estimator for interacting particle systems
              and their mean field limit},
   JOURNAL = {SIAM/ASA J. Uncertain. Quantif.},
  FJOURNAL = {SIAM/ASA Journal on Uncertainty Quantification},
    VOLUME = {12},
      YEAR = {2024},
    NUMBER = {2},
     PAGES = {262--288},
   MRCLASS = {60H10 (35Q70 35Q83 35Q84 62M99 65C30)},
  MRNUMBER = {4727693},
       DOI = {10.1137/22M153848X},
       URL = {https://doi.org/10.1137/22M153848X},
}

@article{shiino1987,
Author = {Shiino, M.},
Title = {{dynamic behavior of stochastic-systems of infinitely many coupled
   nonlinear oscillators exhibiting phase-transitions of mean-field type -
   h-theorem on asymptotic approach to equilibrium and critical slowing
   down of order-parameter fluctuations}},
Journal = {{Phys. Rev. A}},
Year = {{1987}},
Volume = {{36}},
Number = {{5}},
Pages = {{2393-2412}},
Month = {{SEP 1}},
DOI = {{10.1103/PhysRevA.36.2393}},
ISSN = {{1050-2947}},
Unique-ID = {{ISI:A1987K157300049}},
}

@article{pavliotis2025linearization,
  title={Linearization of ergodic McKean SDEs and applications},
  author={Pavliotis, Grigorios A and Zanoni, Andrea},
  journal={arXiv preprint arXiv:2501.13655},
  year={2025}
}

@article {Diez_2022b,
    AUTHOR = {Chaintron, Louis-Pierre and Diez, Antoine},
     TITLE = {Propagation of chaos: a review of models, methods and
              applications. {I}. {M}odels and methods},
   JOURNAL = {Kinet. Relat. Models},
  FJOURNAL = {Kinetic and Related Models},
    VOLUME = {15},
      YEAR = {2022},
    NUMBER = {6},
     PAGES = {895--1015},
      ISSN = {1937-5093},
   MRCLASS = {82C22 (35Q70 65C35 65P20 82C40 92-10)},
  MRNUMBER = {4489768},
MRREVIEWER = {Julian Tugaut},
       DOI = {10.3934/krm.2022017},
       URL = {https://doi.org/10.3934/krm.2022017},
}

@article {Diez_2022a,
    AUTHOR = {Chaintron, Louis-Pierre and Diez, Antoine},
     TITLE = {Propagation of chaos: a review of models, methods and
              applications. {II}. {A}pplications},
   JOURNAL = {Kinet. Relat. Models},
  FJOURNAL = {Kinetic and Related Models},
    VOLUME = {15},
      YEAR = {2022},
    NUMBER = {6},
     PAGES = {1017--1173},
      ISSN = {1937-5093},
   MRCLASS = {82C22 (35Q70 65C35 82C40 92-10)},
  MRNUMBER = {4489769},
MRREVIEWER = {Hui Huang},
       DOI = {10.3934/krm.2022018},
       URL = {https://doi.org/10.3934/krm.2022018},
}

@book {Rockner2007,
    AUTHOR = {Pr{\'e}v{\^o}t, C. and R{\"o}ckner, M.},
     TITLE = {A concise course on stochastic partial differential equations},
    SERIES = {Lecture Notes in Mathematics},
    VOLUME = {1905},
 PUBLISHER = {Springer},
   ADDRESS = {Berlin},
      YEAR = {2007},
     PAGES = {vi+144},
      ISBN = {978-3-540-70780-6; 3-540-70780-8},
   MRCLASS = {60H15 (60-01)},
  MRNUMBER = {2329435 (2009a:60069)},
MRREVIEWER = {Sandra Cerrai},
}

@misc{monmarché2024localconvergencerateswasserstein,
      title={Local convergence rates for Wasserstein gradient flows and McKean-Vlasov equations with multiple stationary solutions}, 
      author={Pierre Monmarché and Julien Reygner},
      year={2024},
      eprint={2404.15725},
      archivePrefix={arXiv},
      primaryClass={math.AP},
      url={https://arxiv.org/abs/2404.15725}, 
}

@article{scheffer2001catastrophic,
  title={Catastrophic shifts in ecosystems},
  author={Scheffer, Marten and Carpenter, Steve and Foley, Jonathan A. and Folke, Carl and Walker, Brian},
  journal={Nature},
  volume={413},
  number={6856},
  pages={591--596},
  year={2001},
  publisher={Nature Publishing Group}
}

@article{beisner2003alternative,
  title={Alternative stable states in ecology},
  author={Beisner, Beatrix E. and Haydon, Daniel T. and Cuddington, Kim},
  journal={Frontiers in Ecology and the Environment},
  volume={1},
  number={7},
  pages={376--382},
  year={2003},
  publisher={Wiley Online Library}
}

\end{document}